\documentclass{amsart}
\usepackage{parskip,multirow,hyperref,enumitem,soul}
\usepackage{mathtools,amssymb,amsthm}
\usepackage{tikz-cd}

\allowdisplaybreaks

\renewcommand{\AA}{\mathbb{A}}

\newcommand{\CC}{\mathbb{C}}
\newcommand{\F}{\mathbf{F}}
\newcommand{\FF}{\mathbb{F}}
\renewcommand{\H}{\mathbf{H}}

\newcommand{\NN}{\mathbb{N}}

\newcommand{\PP}{\mathbb{P}}

\newcommand{\QQ}{\mathbb{Q}}

\newcommand{\ZZ}{\mathbb{Z}}

\DeclareMathOperator{\Char}{char}
\DeclareMathOperator{\codim}{codim}

\DeclareMathOperator{\Disc}{Disc}

\DeclareMathOperator{\End}{End}
\DeclareMathOperator{\ev}{ev}

\DeclareMathOperator{\im}{im}

\DeclareMathOperator{\PGL}{PGL}
\DeclareMathOperator{\Proj}{Proj}

\DeclareMathOperator{\Rat}{Rat}

\DeclareMathOperator{\Res}{Res}
\DeclareMathOperator{\rk}{rk}
\DeclareMathOperator{\Sep}{Sep}
\DeclareMathOperator{\SepEnd}{SepEnd}

\DeclareMathOperator{\sgn}{sgn}

\DeclareMathOperator{\Spec}{Spec}

\DeclareMathOperator{\Sym}{Sym}

\newcommand{\abs}[1]{\left|#1\right|}

\let\Bar\overline
\let\Tilde\widetilde

\theoremstyle{plain}
\newtheorem{thm}{Theorem}[section]
\newtheorem{lem}[thm]{Lemma}
\newtheorem{cor}[thm]{Corollary}
\newtheorem{propn}[thm]{Proposition}

\theoremstyle{definition}
\newtheorem{defn}[thm]{Definition}
\newtheorem{eg}[thm]{Example}
\newtheorem{q}[thm]{Question}

\theoremstyle{remark}
\newtheorem{rmk}[thm]{Remark}

\newcommand\mc\mathcal
\newcommand\mb\mathbf

\title{Critical loci of self-maps of projective space}

\author{Matt Olechnowicz}
\address{Concordia University}
\email{matt.olechnowicz@concordia.ca}

\author{Max Weinreich}
\address{Harvard University}
\email{mweinreich@math.harvard.edu}

\date{\today}
\subjclass[2020]{14A25, 12E05, 37F80} 

\begin{document}

\begin{abstract}
Let $K$ be an algebraically closed field and let $n, d \geq 2$. We show that the critical scheme of a general endomorphism of $\PP^n_K$ of degree $d$ is an integral hypersurface. 
This extends a result of Ingram--Ramadas--Silverman to arbitrary characteristic. 
We use basic facts about polynomial rings to show that certain carefully chosen examples have absolutely irreducible Jacobian.
To handle the wild case where the characteristic of $K$ divides $d$, we introduce a polynomial that imitates the homogeneous Jacobian determinant.
\end{abstract}

\maketitle 

\section{Introduction} \label{sec:intro}

The differential of a morphism $f: X \to Y$ between smooth varieties of dimension $r$ over an algebraically closed field $K$ is the induced $K$-vector bundle homomorphism on the Zariski tangent spaces of $X$ and $Y$. After choosing local coordinates on $X$ and $Y$, the differential $f'$ may locally be expressed in matrix form as the Jacobian of $f$. The critical locus of $f$, denoted $C_f$, is defined locally as the vanishing set of $\det f'$. 
The local algebraic expressions for the Jacobian determinant define a closed subscheme structure on $C_f$, which we call the \emph{critical scheme} (Definition \ref{defn_cf_fulton_style}).

Our goal in this paper is to describe the critical schemes of general maps of projective spaces. 
To do this, we work in the parameter space $\End^n_{d/K}$ of endomorphisms of $\PP^n_K$ of algebraic degree $d$ introduced by Silverman \cite{silvermanGIT}. 
Each endomorphism $f: \PP^n \to \PP^n$ of algebraic degree $d$ may be represented in homogeneous coordinates by
\[[x_0 : \ldots : x_n] \mapsto [F_0(x_0, \ldots, x_n) : \ldots : F_n(x_0, \ldots, x_n)],\]
where $(F_0, \ldots, F_n)$ is a tuple of homogeneous forms of degree $d$ over $K$ that admit no common zero. The representation of $f$ by a tuple of degree $d$ is unique up to simultaneous scaling of the forms $F_0, \ldots, F_n$.  One views $\End_{d/K}^n$ as a quasi-projective variety by identifying $f = [F_0 : \ldots : F_n]$ with the vector of coefficients of the monomials in $F_0, \ldots, F_n$, up to scale.

\begin{thm} \label{main}
Let $K$ be an algebraically closed field. 
For all $n, d \ge 2$ 
the set
\[
\mathcal{E}_{n,d} := \{f \in \End^n_{d/K} : \text{the critical scheme $C_f$ is an integral hypersurface}\} 
\]
is non-empty and Zariski-open.
\end{thm}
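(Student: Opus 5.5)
The plan splits into two parts: openness, then non-emptiness via explicit examples. Throughout, write $J_F = \det(\partial F_i/\partial x_j)$. When $p = \Char K$ does not divide $d$, the Euler relation shows that the critical scheme $C_f$ is the hypersurface $V(J_F)$ of degree $(n+1)(d-1)$. So in that case $\mathcal{E}_{n,d}$ is exactly the set of $f$ for which $J_F$ is nonzero and irreducible as a polynomial; reducedness is then automatic, because $J_F$ is a single irreducible form.

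\textbf{Openness.} Consider the universal family over $\End^n_{d/K}$. Its critical scheme $\mathcal{C}\subset \End^n_{d/K}\times\PP^n$ is cut out, locally on the base, by a single equation whose coefficients are polynomial in the coefficients of $f$. The fibers over the open locus where this equation is not identically zero form a flat family of hypersurfaces of fixed degree. Inside the projective space of degree-$e$ forms, the reducible ones are the union of the images of the multiplication maps $\PP(\Sym^a)\times\PP(\Sym^{e-a})\to\PP(\Sym^e)$ for $1\le a\le e-1$. These images are closed, and they also cover the non-reduced forms. Hence the condition ``$C_f$ is an integral hypersurface'' is the preimage of an open set under a morphism to $\PP(\Sym^e)$, and so it is open. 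In the wild case I would use the paper's replacement polynomial that imitates the homogeneous Jacobian, and check that it is again given by a morphism from $\End^n_{d/K}$ into forms of a fixed degree.

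\textbf{Non-emptiness.} I would exhibit one explicit $f$ for each $(n,d,p)$. In the tame case I would try a perturbation of a diagonal map, say $F_i = x_i^d + x_i x_{i+1}^{d-1}$ with indices taken cyclically, or $F_i = x_i^{d} + c\,x_{i+1}^{d}$ with suitable constants. The aim is a Jacobian matrix that is sparse, for instance bidiagonal plus one corner entry, so that $J_F$ is an explicit sum of two monomial-like products, of the form $\prod_i a_i(x) \pm \prod_i b_i(x)$. Irreducibility of such binomials I would prove by elementary means: view $J_F$ as a polynomial in one variable $x_0$ over $K[x_1,\dots,x_n]$. It suffices to show it is primitive and of degree $1$ in $x_0$, or more generally an Eisenstein-type polynomial with respect to a prime such as $x_1$. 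One also has to check that the $F_i$ have no common zero, which is a direct computation for such triangular systems. I would then check irreducibility over $K$ directly, not just over a smaller field.

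\textbf{Main obstacle.} The hardest case is the wild one, $p\mid d$. Here the Euler relation fails: $\sum x_j\,\partial F_i/\partial x_j = 0$, so the homogeneous Jacobian $J_F$ no longer computes $C_f$. The critical scheme must instead be computed in affine charts, using dehomogenized Jacobians, which have degree $(n+1)(d-1)$ only after correction. My approach would be to construct a form $\Delta_F$ that restricts on each chart $x_k\ne0$ to the affine Jacobian determinant, up to a unit. One candidate is the $(n+1)\times(n+1)$ determinant with one column replaced by $(F_0,\dots,F_n)$ and divided by $x_k$; gluing these across charts produces $\Delta_F$. The proof would then show that $\Delta_F$ is a well-defined form of degree $(n+1)(d-1)$ whose zero scheme is $C_f$. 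Finally, I would choose the wild examples so that $\Delta_F$ has the same sparse binomial shape as in the tame case, and rerun the Eisenstein/primitivity argument. Verifying that this binomial shape survives when $p$ divides $d$ is the step I expect to be most delicate.
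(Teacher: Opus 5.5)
Your openness argument is essentially the paper's (Lemma~\ref{lem_specific_implies_general}). Your $\Delta_F$, a Jacobian column replaced by $F$ and divided by $x_k$, is exactly the paper's flat Jacobian $J^*_F=\det D^{(i)}_F/x_i=J_F/d$. Note that even in the tame case, the \emph{scheme} equality $C_f=V(J_F)$ that you need for ``reducedness is automatic'' does not follow from Euler alone. It comes from the chart identity of Lemma~\ref{KEY LEMMA}, which your $\Delta_F$ plan would have to prove anyway.

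The genuine gap is non-emptiness, which is the real content of the theorem: the proposal contains no example that works and no irreducibility proof. Your candidate $F_i=x_i^d+c\,x_{i+1}^d$ is $A\circ(x_0^d,\dots,x_n^d)$ with $A=I+cS$, where $S$ is the cyclic shift. So $J_F=d^{n+1}\det(A)(x_0\cdots x_n)^{d-1}$ is a scalar times a monomial, never irreducible. When $\Char K\mid d$, even $J^*_F=d^{n}\det(A)(x_0\cdots x_n)^{d-1}$ vanishes, so $f$ is inseparable. Your other candidate $F_i=x_i^d+x_ix_{i+1}^{d-1}$ has $J_F=\prod_i(dx_i^{d-1}+x_{i+1}^{d-1})+(-1)^n(d-1)^{n+1}(x_0\cdots x_n)^{d-1}$. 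In the tame case this has degree $2(d-1)$ in every variable, so your ``primitive of degree one in $x_0$'' criterion cannot apply, and you name no Eisenstein prime. It also violates your own morphism requirement: for $n$ odd there is the common zero $[1:\zeta:\zeta^2:\cdots:\zeta^n]$ with $\zeta^{d-1}=-1$, and for $\Char K=2$ there is the common zero $[1:\cdots:1]$. The wild case is only a hope (``choose the wild examples so that\dots'').

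The missing ideas are these.
\begin{enumerate}
\item The set $O$ of $[F]\in\Bar\End^n_d$ with $J^*_F$ irreducible is open in the \emph{irreducible} variety $\Bar\End^n_d$. So a single tuple with irreducible $J^*_F$, even one with common zeros, already forces $\mathcal{E}_{n,d}=O\cap\End^n_d\neq\varnothing$. Insisting on a morphism only makes the search harder.
\item By Lemma~\ref{geo irr}, it suffices to prove irreducibility of the specialization $J^*_F(x_0,x_1,x_2,0,\dots,0)$.
\item You need a family engineered so that this specialization is \emph{linear} in $x_2$ with coprime coefficients. The paper takes $F_0=G(x_0,x_1)x_2^e$ and $F_i=x_1^{d-1}x_i-x_0^{d-1}x_{i+1}$, whose Jacobian is supported essentially on two columns and two cyclic diagonals. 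The specialization is then $P_0x_2+Q$ (if $e=0$) or $Qx_2+P_1$ (if $e=1$), with $P_e,Q\in K[x_0,x_1]$. The paper shows $\Res(P_e,Q)\neq 0$ via the resultant--discriminant identity of Lemma~\ref{res jac disc}, under explicit conditions on $G$, and Lemma~\ref{lin irr} then gives irreducibility.
\end{enumerate}
Choosing $e=1$ when $\Char K\nmid d$ and $e=0$ when $\Char K\mid d$ (so that $\Char K\nmid d-1$) covers every characteristic. This is precisely the wild case your plan leaves open. Without a concrete family and such a verification, non-emptiness, and hence the theorem, is not established.
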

The word \emph{integral} is here used in the sense of scheme theory; a hypersurface is integral if and only if it is irreducible and reduced, i.e.\ irreducible of multiplicity $1$.


We interpret Theorem \ref{main} as a rough description of the critical locus of a general endomorphism. Indeed, Zariski-open subsets of varieties are dense as soon as they are nonempty, so Theorem \ref{main} describes ``almost all'' endomorphisms. 

It is unsurprising that the set $\mc{E}_{n,d}$ is Zariski-open. The difficulty lies in demonstrating nonemptiness, since easily computed critical schemes tend to be degenerate. As an extreme case, consider the power map of degree $d \geq 2$, defined by
$$f: \PP^n \to \PP^n,$$
$$[x_0 : \ldots : x_n] \mapsto [x_0^d : \ldots x_n^d].$$
Its critical scheme $C_f$ is 
the union of the $n+1$ coordinate hypersurfaces $\{x_i = 0\}$, each counted with multiplicity $(d - 1)$---unless $\Char K \mid d$, in which case $C_f = \PP^n$. In particular we observe that $C_f$ may have multiple components, may be nonreduced, or may fail to be a hypersurface. (For even worse, see Example \ref{eg_most_pathological}.) The upshot of Theorem \ref{main} is that all these features of the power map are exceptional.

Following an idea of Starr, Ingram--Ramadas--Silverman proved Theorem \ref{main} in the case $\Char K = 0$ except when both $d=2$ and $n\geq 3$ \cite[Theorems 14 and 15(a)]{IRS}. Their proof uses the characteristic-zero assumption in an essential way and depends on some results from the theories of surfaces and determinantal varieties. 
Consequently, it is unclear how to extract any example of an element of $\mc{E}_{n,d}$ from their method. 

The motivation for the present work was to offer a more direct proof by identifying, for each pair $(n,d)$, a concrete example of a degree-$d$ endomorphism $f: \PP^n \to \PP^n$ for which $C_f$ is an integral hypersurface. We obtain the following family of examples.

\begin{thm} \label{morphism example intro}
Let $n, d \ge 2$ and let $p$ be an odd prime not dividing $d-1$.
Define
\begin{align*}
    F_0 &:= px_0^d + 2x_0 x_1 \frac{x_1^{d-1}-x_0^{d-1}}{x_1-x_0} \\ 
    F_i &:= px_i^d + 2x_1^{d-1} x_i - 2x_0^{d-1} x_{i+1} \qquad (1 \le i \le n)
\end{align*}
where $x_{n+1} = x_0$.
Then 
\[f := [F_0 : \ldots : F_n]\]
is a degree-$d$ endomorphism of $\PP^n$ defined over $\QQ$ whose critical scheme $C_f$ is an integral hypersurface.
\end{thm}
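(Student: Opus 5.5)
The plan is to reduce modulo two different primes: modulo $2$ to see that $f$ is a morphism, and modulo $p$ to see that the Jacobian determinant is absolutely irreducible.

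\emph{Step 1: $f$ is a morphism.} The forms have integer coefficients. Modulo $2$ every term carrying a factor $2$ vanishes, and since $p$ is odd we get $F_i \equiv x_i^d$. So $f \bmod 2$ is the power map, whose homogeneous resultant is $\pm 1$. Hence $\Res(F_0,\ldots,F_n)$ is an integer that is odd, in particular nonzero, so $F_0,\ldots,F_n$ have no common zero over $\overline{\QQ}$. Since $\Char \QQ = 0$, the Euler relation shows that $C_f$ is cut out by the homogeneous Jacobian determinant $J := \det(\partial F_i/\partial x_j)$. This is a form of degree $(n+1)(d-1)$ in $\ZZ[x_0,\ldots,x_n]$. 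It therefore suffices to show that $J$ is nonzero and absolutely irreducible over $\QQ$: such a form defines an integral hypersurface, being irreducible and of multiplicity one.

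\emph{Step 2: reduction to characteristic $p$.} Let $\bar J \in \FF_p[x_0,\ldots,x_n]$ be the reduction of $J$. Suppose $\bar J$ is nonzero of the full degree $(n+1)(d-1)$ and absolutely irreducible over $\overline{\FF}_p$. Then $J$ is absolutely irreducible. Indeed, a factorization $J = AB$ over a number field $L$ can be normalized, via Gauss's lemma over the localization of $\mathcal{O}_L$ at a prime $\mathfrak{p} \mid p$, so that $A$ and $B$ are $\mathfrak{p}$-integral with nonzero reductions. Because $\bar J$ has full degree and everything is homogeneous, $\deg \bar A = \deg A$ and $\deg \bar B = \deg B$. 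Absolute irreducibility of $\bar J$ then forces $A$ or $B$ to be constant.

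\emph{Step 3: compute $\bar J$.} Modulo $p$ the terms $p x_i^d$ disappear, leaving
\[
\bar F_0 = 2x_0x_1\frac{x_1^{d-1}-x_0^{d-1}}{x_1-x_0}, \qquad
\bar F_i = 2x_1^{d-1}x_i - 2x_0^{d-1}x_{i+1} \quad (1 \le i \le n).
\]
Each row with $i\ge 2$ of the Jacobian matrix has the following shape:
\begin{itemize}
\item $2x_1^{d-1}$ on the diagonal;
\item $-2x_0^{d-1}$ in column $i+1$, read cyclically;
\item a term $-2(d-1)x_0^{d-2}x_{i+1}$ in column $0$;
\item a term $2(d-1)x_1^{d-2}x_i$ in column $1$.
\end{itemize}
Rows $0$ and $1$ involve only a few columns. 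I would expand the determinant along this nearly bidiagonal cyclic structure, treating $x_0,x_1$ as parameters. The cyclic part contributes $(2x_1^{d-1})^{n-1}$ against $(-2x_0^{d-1})^{n-1}$, corrected by terms linear in $x_2,\ldots,x_n$ coming from the first two columns. The expected outcome is that, as a polynomial in $x_2,\ldots,x_n$ over $\overline{\FF}_p[x_0,x_1]$, $\bar J$ has degree at most $1$ in each variable, indeed probably total degree $1$:
\[
\bar J = G(x_0,x_1) + \sum_{i\ge 2} H_i(x_0,x_1)\,x_i .
\]
The hypothesis $p\nmid d-1$ keeps the coefficients $(d-1)$ in the $H_i$ nonzero. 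I would also check that the total degree is still $(n+1)(d-1)$, for instance by exhibiting one surviving monomial.

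\emph{Step 4, the main obstacle: irreducibility of $\bar J$.} A polynomial of degree $1$ in $x_2,\ldots,x_n$ of the form $G+\sum H_i x_i$ is irreducible over $\overline{\FF}_p$ exactly when $\gcd(G,H_2,\ldots,H_n)=1$ in $\overline{\FF}_p[x_0,x_1]$, assuming not all $H_i$ vanish. Since these are binary forms, the gcd reduces to a question about common roots on $\PP^1$. The $H_i$ should be products of powers of $x_0$, $x_1$ and the factor $(x_1^{d-1}-x_0^{d-1})/(x_1-x_0)$ coming from $\bar F_0$, or its derivatives. So the remaining task is to show that $G$ does not vanish at $x_0=0$, at $x_1=0$, or at any root of the relevant factor; in characteristic $p$ these roots are the $(d-1)$-th roots of unity other than $1$. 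This is where I expect the actual work, and where $p\nmid d-1$ (distinct roots of unity) and $p$ odd (the $2$'s are units) should be used. If the degree-$1$ structure fails for small $n$, I would handle $n=2$ separately by a direct computation of the $3\times 3$ determinant.
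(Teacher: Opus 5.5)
Your proof breaks down when $p \mid d$, and the hypotheses allow this (for instance $d = p = 3$, or $d = 6$, $p = 3$). Over $\ZZ[\mathbf{x}]$, Euler's relation gives $D_F\mathbf{x} = dF$. Cramer's rule then yields $x_0 J = d\det D^{(0)}_F$, where $D^{(0)}_F$ is $D_F$ with its first column replaced by $F$. So $d \mid J$ in $\ZZ[\mathbf{x}]$, and when $p \mid d$ the reduction $\overline{J}$ is identically zero. You can see this in your Step 3 matrix: in characteristic $p \mid d$ the entries $\partial_1\overline{F}_1 = 2dx_1^{d-1}$ and $\partial_0\overline{F}_n = -2dx_0^{d-1}$ vanish. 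The hypothesis of your Step 2 ($\overline{J}$ nonzero of full degree) therefore fails, and the ``surviving monomial'' of Step 3 does not exist. Reducing modulo a different prime does not help, because $f$ itself depends on $p$ and the terms $px_i^d$ disappear only modulo $p$.

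The missing idea is to divide out $d$ before reducing. Work with $J^* := J/d = \det D^{(0)}_F/x_0 \in \ZZ[\mathbf{x}]$, which cuts out the same hypersurface over $\QQ$. Its reduction can be computed directly in characteristic $p$ as $\det D^{(0)}_{\overline{F}}/x_0$; this is the paper's flat Jacobian (Corollary~\ref{Jstar formula}). With it, $\overline{F}$ is, up to the unit $2$, the $e=0$ member of Proposition~\ref{unified eg}, with $G = x_0x_1(x_1^{d-1}-x_0^{d-1})/(x_1-x_0)$. The computation there shows that its flat Jacobian restricted to $x_3=\cdots=x_n=0$ is $P_0x_2+Q$, where $P_0 = (d-1)Gx_0^{d-2}x_1^{(n-1)(d-1)-1}$ and $Q = (\partial_0G)x_1^{n(d-1)} + (\partial_1G)x_0^{n(d-1)}$. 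No factor of $d$ survives, so your Step 4 coprimality check works for every allowed $p$: $Q(0,1) = Q(1,0) = 1$, and at each root $\zeta \ne 1$ of $\zeta^{d-1}=1$ Euler gives $Q(\zeta,1) = (1-\zeta)\,\partial_0G(\zeta,1) \neq 0$, because these roots are simple. Apart from this, your outline is the paper's: reduce mod $2$ to the power map for the morphism property, and mod $p$ with a lifting argument for absolute irreducibility. One smaller point: the set-theoretic Euler argument only identifies $C_f$ with $V(J)$ as sets. Integrality needs multiplicity one, so you need the scheme-theoretic equality, which the paper obtains from the chartwise identity of Lemma~\ref{KEY LEMMA} (Theorem~\ref{thm_flat_J_poly}).
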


\subsection{Critical theory}
We now discuss the new methods that we introduce to study critical schemes over general algebraically closed fields. We refer to the situation where $\Char K \nmid d$ as the \emph{tame} setting, and $\Char K \mid d$ as the \emph{wild} setting. While the tame setting largely matches characteristic $0$, the wild setting poses many interesting problems. (We prove most of our results for rational maps rather than just endomorphisms, but for ease of exposition we only discuss endomorphisms in the Introduction.)

The definition of the critical scheme $C_f$ is local, but for self-maps of $\PP^n$ over $\CC$, one often sees a global definition of critical locus in terms of the vanishing of a single Jacobian determinant.
In characteristic $0$, the critical scheme $C_f$ of an endomorphism $f = [F] = [F_0 : \ldots : F_n]$ coincides with the closed subscheme $V_{\PP^n}(J_F)$ cut out by the \emph{homogeneous} Jacobian determinant
\[
J_{F} := \det \frac{\partial(F_0, \ldots, F_n)}{\partial(x_0, \ldots, x_n)}
=
\begin{vmatrix}
\dfrac{\partial F_0}{\partial x_0} & \ldots & \dfrac{\partial F_0}{\partial x_n} \\
\vdots & \ddots & \vdots \\
\dfrac{\partial F_n}{\partial x_0} & \ldots & \dfrac{\partial F_n}{\partial x_n}
\end{vmatrix}.
\]
From the algebraic point of view, there are two difficulties in working with $V_{\PP^n}(J_{F})$. 
First, the global definition by $V_{\PP^n}(J_F)$ fails to capture any information in the wild setting $\Char K \mid d$, because then $J_{F}$ is identically $0$; see Proposition \ref{Cf is VJF}. Second, while it is relatively clear that the critical scheme $C_f$ and $V_{\PP^n}(J_{F})$ agree as sets in the tame setting $\Char K \nmid d$ (Proposition \ref{Cf is VJF}), it is harder to show that their scheme structures coincide.

To remedy these issues, we introduce a polynomial, the \emph{flat Jacobian $J^*_F$}, that extends the useful properties of $J_F$ to all maps, tame or wild. The inspiration for the definition is the observation that $J_F$ is always divisible by the degree $d$ of the homogeneous polynomials $F_i$ making up $F$. We study the generic homogenous Jacobian determinant $J_{n,d}$ over $\ZZ$ (Definition \ref{def_Jnd}) and observe that $d \mid J_{n,d}$. To correct for this, when $d \geq 1$, we define the generic \emph{flat} Jacobian over $\ZZ$ by
$$J^*_{n,d} := \frac{1}{d} J_{n,d}.$$
Then $J^*_F$ is defined by specializing $J^*_{n,d}$ to the base field $K$ and substituting in the coefficients of $F$. Our key result regarding the flat Jacobian $J^*_F$ is that it correctly describes the scheme structure of the critical locus.
\begin{thm} \label{thm_intro_Jstar}
Let $n, d \ge 1$. 
Given
$f = [F] = [F_0 : \ldots : F_n] \in \End_{d/K}^n$, let $J^*_F$ denote its flat Jacobian. Then $V_{\PP^n}(J^*_F) = C_f$ as schemes.
\end{thm}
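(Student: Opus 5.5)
Since both sides are subschemes of $\PP^n$ defined locally, we compare them on each standard affine chart $U_k = \{x_k \neq 0\}$; by symmetry it suffices to take $k=0$. On $U_0$ we use coordinates $y_i = x_i/x_0$ for $i = 1,\ldots,n$, and on the target we cover by charts $\{X_j \neq 0\}$. Over the open set $f^{-1}\{X_j\neq 0\}\cap U_0$, the map is given by the dehomogenized functions $g_i = F_i(1,y)/F_j(1,y)$ for $i\neq j$. By Definition \ref{defn_cf_fulton_style}, $C_f$ is cut out there by the determinant of the $n\times n$ Jacobian of $(g_i)_{i\neq j}$ with respect to $(y_1,\ldots,y_n)$. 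The goal is an identity of the form
\[
\det\frac{\partial(g_i)_{i\ne j}}{\partial(y_1,\ldots,y_n)} \;=\; \pm\,\frac{J^*_F(1,y)}{F_j(1,y)^{\,n+1}},
\]
valid over any field. Since $F_j(1,y)$ is a unit on this open set, the two ideals then agree, and the scheme equality follows by gluing.

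The key step is to prove this identity \emph{generically over $\ZZ$} and then specialize. Work in the polynomial ring $R=\ZZ[a][x_0,\ldots,x_n]$, where $a$ denotes the generic coefficients, with $F_i$ the generic forms. First, the quotient rule combined with column operations gives
\[
F_j^{n+1}\det\partial(g)/\partial(y) = \pm\det M,
\]
where $M$ is the $(n+1)\times(n+1)$ matrix whose rows are $F_i(1,y)$ followed by $\partial_{y_1}F_i(1,y),\ldots,\partial_{y_n}F_i(1,y)$; this is the standard bordered-determinant trick. Second, Euler's identity $\sum_k x_k\,\partial F_i/\partial x_k = d\,F_i$ in $R$ gives $x_0 J_{n,d} = d\cdot\det N$, where $N$ is obtained from the homogeneous Jacobian by replacing its $x_0$-column with the column $(F_i)$. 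Because $R$ is a domain and torsion-free, we get $x_0 J^*_{n,d} = \det N$ exactly over $\ZZ$. This identity has no division by $d$, so it specializes to every $K$, including the wild case. Dehomogenizing at $x_0=1$ turns $\det N$ into $\det M$, which yields the displayed identity with $J^*_F(1,y)$.

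Finally, we check the global statement. The condition $f\in\End^n_{d/K}$ guarantees that the sets $f^{-1}\{X_j\neq0\}$ cover $\PP^n$. The local generators $J^*_F(1,y)$ on $U_0$, and their analogues $J^*_F/x_k^{\deg}$ on the other charts, are exactly the dehomogenizations of the single form $J^*_F$, so together they define $V_{\PP^n}(J^*_F)$. The main obstacle is the bookkeeping in the first step: relating the Jacobian of the rational functions $g_i$ to the bordered determinant with the correct power of $F_j$ and the correct sign, and checking that this is compatible with whatever local coordinates Definition \ref{defn_cf_fulton_style} uses. A change of target chart or coordinates multiplies the determinant by a unit, so the particular choice of chart does not affect the ideal. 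The degenerate case where $J^*_F\equiv 0$ requires no separate treatment, since both sides are then all of $\PP^n$ by the same identity.
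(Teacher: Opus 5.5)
Your proposal is correct and follows the paper's proof essentially step for step: your identity $x_0 J^*_{n,d}=\det N$ is the paper's Lemma \ref{euler cramer} together with Corollary \ref{Jstar formula} (Euler plus Cramer over $\ZZ$, then specialization), your bordered-determinant identity is Lemma \ref{KEY LEMMA}, and the paper then glues over the cover $U_i\cap f^{-1}(U_j)$ just as you do. The only cosmetic difference is that the paper proves the bordered identity via Cauchy--Binet with an $n\times(n+1)$ matrix, working in homogeneous coordinates before dehomogenizing, whereas your direct row operations on the square matrix (scale the $n$ rows $i\neq j$ by $F_j$, subtract $F_i$ times row $j$, expand along the first column) accomplish the same computation.
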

In characteristic $0$, there is no difference between the vanishing schemes of $J_F$ and $J^*_F$, so this result justifies the folklore equivalence $C_f = V_{\PP^n}(J_F)$. In characteristic $p$, we obtain another interpretation, which should be useful in the study of inseparability.
\begin{cor} \label{cor_intro_insep}
Let $n, d \geq 1$. Then for each
$f = [F] = [F_0 : \ldots : F_n] \in \End_{d/K}^n$, the following are equivalent:
\begin{enumerate}
    \item the map $f$ is inseparable;
    \item $J^*_F = 0$;
    \item $C_f = \PP^n$.
\end{enumerate}
\end{cor}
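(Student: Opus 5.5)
The plan is to read the corollary off Theorem \ref{thm_intro_Jstar} together with the local description of the critical scheme. The equivalence (2)$\Leftrightarrow$(3) is immediate from Theorem \ref{thm_intro_Jstar}. Since $V_{\PP^n}(J^*_F) = C_f$ as schemes, $C_f = \PP^n$ holds exactly when the homogeneous form $J^*_F$ cuts out all of $\PP^n$. Because $\PP^n$ is integral, a form vanishing as a scheme on $\PP^n$ must be the zero polynomial. Conversely, $J^*_F = 0$ gives $V_{\PP^n}(0) = \PP^n$. One small point needs attention: a nonzero form of positive degree could vanish at every $K$-point only if it were zero, since $K$ is algebraically closed and hence infinite. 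Degree-$0$ nonzero forms give the empty scheme. So the scheme statement and the set statement agree here.

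For (1)$\Leftrightarrow$(3), I would use the local definition of $C_f$ (Definition \ref{defn_cf_fulton_style}). On an affine chart $U \subset \PP^n$ mapping into an affine chart of the target, choose affine coordinates. Then $C_f \cap U$ is cut out by the affine Jacobian determinant $\det f'$ of the dehomogenized map, which is a regular function on $U$. Since $\PP^n$ is irreducible, $C_f = \PP^n$ iff $\det f' = 0$ identically on one, equivalently every, nonempty chart.

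It then remains to show that $\det f'$ vanishes identically iff $f$ is inseparable. Here $f$ is dominant, being a finite endomorphism of degree $d^n$, so it induces a finite field extension $K(\PP^n)/f^*K(\PP^n)$. Write $K(\PP^n) = K(t_1,\dots,t_n)$ and let $u_i = f^*(t_i)$. The Jacobian matrix represents the map $f^*\Omega^1 \to \Omega^1$ on generic stalks, that is, $du_j = \sum_i (\partial u_j/\partial t_i)\, dt_i$ in $\Omega^1_{K(t)/K}$. The extension $K(t)/K(u)$ is separable iff $du_1,\dots,du_n$ span the $n$-dimensional $K(t)$-space $\Omega^1_{K(t)/K}$. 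This is the standard criterion: $K(t)/K(u)$ is separable iff $\Omega^1_{K(t)/K(u)} = 0$, and $\Omega^1_{K(t)/K(u)}$ is the cokernel of $K(t)\otimes\Omega^1_{K(u)/K}\to\Omega^1_{K(t)/K}$ by the first fundamental exact sequence. Since the $du_j$ span exactly when the Jacobian determinant is nonzero, $f$ is separable iff $\det f' \ne 0$. Combining this with the previous paragraph gives (1)$\Leftrightarrow$(3).

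The main obstacle is bookkeeping: checking that the scheme-theoretic definition in Definition \ref{defn_cf_fulton_style} really is, chart by chart, the vanishing of the Jacobian of the dehomogenized map. It also has to be independent of coordinates up to units, so that "$C_f=\PP^n$" is equivalent to the generic vanishing of $\det f'$. Given that identification, the differential criterion for separability does the rest. If the paper's Definition \ref{defn_cf_fulton_style} is instead phrased via the sheaf map $f^*\Omega^1_{\PP^n}\to\Omega^1_{\PP^n}$ and its determinant, the argument becomes even more direct: that determinant is a global section of a line bundle that vanishes identically iff it vanishes at the generic point, which is exactly the separability criterion above.
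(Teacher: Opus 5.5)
Your proof is correct and is essentially the paper's argument. The paper also gets the equivalence with $J^*_F=0$ from the scheme identity $V_{\PP^n}(J^*_F)=C_f$ of Theorem \ref{thm_intro_Jstar}, and gets the equivalence with inseparability from the generic-stalk criterion $\Omega_{K(X)/K(Y)}=0 \iff \det f'\neq 0$, which it packages as Proposition \ref{prop_cf_and_sep}. That proposition is stated for rational maps, so it also treats the non-dominant case, which you sidestep by noting that endomorphisms are finite and hence dominant.
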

We use this to show that the parameter space $\SepEnd_{d/K}^n$ of separable endomorphisms within $\End_{d/K}^n$ is a nonempty-Zariski open set, and there is an open subscheme $\SepEnd_{d/\ZZ}^n$ of Silverman's parameter space of endomorphisms over $\ZZ$ that parametrizes separable endomorphisms.

The terminology ``flat'' for the flat Jacobian $J^*_{n,d}$ is informal, and motivated as follows. Since $J_{n,d}$ is a bihomogeneous polynomial over $\ZZ$, it defines a family of subschemes of $\PP^n_\ZZ$ over $\End^n_{d/\ZZ}$. This family is very far from being flat; its general fiber is a hypersurface, but its fibers at primes $p \mid d$ jump in dimension. The ``flat'' Jacobian $J^*_{n,d}$ avoids these bad fibers, and defines a flat family of hypersurfaces over the larger base $\SepEnd_{d/\ZZ}^n$. For details, see Section \ref{sect_separable_scheme}.

\subsection{Proof structure}
Theorem \ref{thm_intro_Jstar}, which relates the flat Jacobian to the critical scheme, is proved by a delicate computation in affine charts which shows the polynomials defining each agree. With Theorem \ref{thm_intro_Jstar} in hand, Theorem \ref{main} boils down to the existence (for any given field $K$ and pair $n, d \geq 2$) of a tuple $F = (F_0, \ldots, F_n) \in K[x_0, \ldots, x_n]^{n+1}$ of homogeneous polynomials of degree $d$ such that $J^*_F$ is irreducible in $K[x_0, \ldots, x_n]$.
See Lemma \ref{lem_specific_implies_general} for the proof of this reduction step.

This brings us to the most technical part of the article: the search 
for an irreducible example.
Locating a reasonable candidate $F$ is difficult because $J^*_F$ has high degree in $x_0, \ldots, x_n$, and its coefficients are complicated polynomials in the coefficients of $F$.
Through much trial and error, we discovered a family of tuples $F$ for which the homogeneous Jacobian matrix $D_F$ is supported in essentially just two columns and two diagonals (see Proposition \ref{unified eg}).
Proving irreducibility is hard and requires casework depending on characteristic. 
The tricks we use all belong to elementary abstract algebra, but they are rather \textit{ad hoc}, and we defer further discussion to Section \ref{sec:eg motivation}. This proves Theorem \ref{main}.

The exemplary tuple $F$ just described has common zeros among its components, so it is not itself an element of $\End_{d/K}^n$. Rather, it defines a rational map. Thanks to the rigidity of the Zariski topology, a rational map is enough to prove Theorem \ref{main}. But one really wants to see an honest-to-goodness morphism, and ruling out common zeros is usually a cumbersome resultant calculation. Instead, we use an interpolation trick to manufacture the morphism-example of Theorem \ref{morphism example intro}. The method is to construct a tuple $F$ over $\QQ$ that is a morphism modulo $2$ and has integral critical scheme modulo a well-chosen prime $p$. This allows us to check the two conditions independently, preventing interference between the two calculations. Even though Theorem \ref{morphism example intro} is set in characteristic $0$, which is generally easier, characteristic-$p$ methods are an essential ingredient in the proof.

\subsection{What else is known}

\subsubsection{Critical loci in algebraic geometry and dynamics}
Critical loci are ubiquitous in calculus and differential geometry. In algebraic geometry, the critical scheme is a fundamental tool due to its connections to ramification, the Riemann--Hurwitz formula, the \'etale property, and separability. Indeed, for separable dominant maps between varieties of the same dimension, the critical scheme $C_f$ may be identified with the Weil divisor
\[\sum_{\textrm{prime divisors } V} \mu_V [V],\]
where the multiplicity $\mu_V$ is the length of the relative canonical sheaf at $V$.
Most sources call this the \emph{ramification divisor}, because, in characteristic $0$, for each hypersurface $V \subset X$ such that $f(V)$ is a hypersurface, the extension of discrete valuation rings induced by $f$ at $V$ has ramification index $\mu_V + 1$. In particular, in characteristic $0$, the critical locus of a dominant endomorphism $f: \PP^n \to \PP^n$ is also the support of the set of ramified hypersurfaces, providing an alternate divisor structure of interest on $C_f$. In positive characteristic, the relationship between the critical scheme and DVR ramification indices is more subtle due to the possibility of wild ramification, so we distinguish between these structures (and focus solely on the former).

Critical loci are also of fundamental importance in algebraic and complex dynamics. Over $\CC$, the two definitions of the critical locus seem to be used interchangeably. For uses of the local definition, see e.g.\ \cite[Definition 2.1]{FS92}, 
\cite[\S 4.2]{Ueda}, 
\cite[\S 2]{Rong}, 
\cite[\S 1]{Astorg}, 
\cite[before Definition 2]{GHK}, 
\cite[p.\ 4]{GTV}. 
For uses of the global definition, 
see e.g.\ \cite[Definition 1]{IRS}, 
\cite[\S 1]{Ole}, 
\cite[Proposition 5.8]{Koch} 
\cite[Corollary 1]{Astorg}, 
\cite[Theorem 2.4]{FS94}, 
\cite[\S 1.2]{Sibony1999}, 
\cite[Proposition 1.6]{DS}. 

The \emph{branch locus} of a map is the image of its critical locus. Other works are devoted to understanding the geometry of branch loci, e.g.\ \cite{Gurjar}. In particular, post-critically finite maps (i.e.\ endomorphisms for which the branch locus is contained in the critical locus) are an active research area, cf.\ \cite{Ole, IRS, GTV, Astorg}. While the present article contains no dynamics \textit{per se}, it was our interest in post-critically finite maps that originally led us to the problem at hand. 
In particular, Theorem \ref{main} lets us remove the restriction on $n$ and $d$ in \cite[Main Theorem]{Ole}.
Thus, we now know that post-critically dynamically improper maps are Zariski-dense in $\End^n_d$ for all $n, d \ge 2$ (at least in characteristic $0$).

\subsubsection{Necessity of the hypotheses}
In Theorem \ref{main}, we assume $n, d \geq 2$. These assumptions are necessary.

When $d = 1$, endomorphisms of $\PP^n$ are invertible, so their critical schemes are empty, hence non-irreducible.

When $n = 1$, the critical scheme $C_f$ is either all of $\PP^1$ (the inseparable case) or is a multiset of degree $2d-2$, and the latter are not integral. If further $\Char K = 2$, then due to wild ramification, the multiplicity of each point in the critical scheme is at least $2$, so the critical scheme is not even reduced!

Neither may we replace $\PP^n$ in Theorem \ref{main} by an arbitrary variety of dimension $n > 1$. 
For instance, every surjective endomorphism $f$ of $\PP^1 \times \PP^1$ is of the form \[f(P, Q) = (a(P), b(Q) )\]
for some nonconstant morphisms $a, b : \PP^1 \to \PP^1$, up to a permutation of components. If $\deg a$ and $\deg b$ are at least $2$, then $C_f$ contains both horizontal and vertical components and is therefore reducible.

\subsubsection{Comparison to Ingram--Ramadas--Silverman}

Considerably more is known regarding the geometry of critical loci in characteristic $0$, especially due to the work of Ingram--Ramadas--Silverman \cite{IRS}.
Since they work with the homogeneous Jacobian determinant $J_F$, their methods are not directly applicable in the wild setting.
 
Our work was inspired by \cite[Theorem 14]{IRS}: for all $n \geq 2$ and $d \geq 3$, the critical hypersurface is irreducible. (In fact, the proof shows integrality, which is stronger.) We thank the authors for explaining this argument to us in detail. Their technique uses an incidence correspondence to estimate the codimension of the singular locus of the scheme $C_f$, showing that for a general map $f$, this codimension is at least $3$. But since non-integral projective hypersurfaces have singular locus of codimension at most $1$, it follows that these maps have integral critical scheme.

That result is complemented by \cite[Theorem 15(a)]{IRS}: for all $d \geq 2$, the critical scheme of a general degree-$d$ endomorphism of $\PP^2$ is smooth and irreducible. (The theorem was stated for $d \geq 1$, but 
    our convention in this work is that 
the empty scheme is not integral.)
The proof relies on surface singularity theory \cite{Kulikov}.

Applying integrality, Ingram--Ramadas--Silverman go further and show that if $n \geq 2$ and $d \geq 3$, the critical scheme of a general map is of general type. This argument, suggested by Jason Starr, uses more advanced techniques (desingularizations of determinantal varieties, canonical singularity theory) due to Vainsencher \cite{Vainsencher} and Starr \cite{Starr}.

\begin{q}
For which pairs $(n,d)$ is the critical locus of a general degree-$d$ endomorphism of $\PP^n$ smooth?
\end{q}

\begin{q}
Does there exist a morphism $f$ such that $C_f$ is rational?
\end{q}

\subsection{Outline of paper}

Section \ref{sec:prelim} contains preliminaries from elementary abstract algebra, as well as facts about spaces of polynomials and self-maps of projective spaces.

Section \ref{sec:CJ} formally defines the critical scheme and establishes its relationship to the homogeneous Jacobian determinant, leading to the division between the tame and wild settings.

Section \ref{sec:J*} introduces the flat Jacobian $J^*_{n,d}$. Theorem \ref{thm_intro_Jstar} is proved as a special case of Theorem \ref{thm_flat_J_poly}, which shows that the flat Jacobian defines the critical scheme of a rational self-map of projective space.

Section \ref{sec:eg} reduces Theorem \ref{main} to finding a tuple $F$ with irreducible flat Jacobian $J^*_F$, then constructs the desired tuple (thereby proving said Theorem).

Section \ref{sec:morphisms} describes our method for promoting rational maps with integral critical scheme to morphisms. We prove Theorem \ref{morphism example intro} in the text as Theorem \ref{morphism example}.

Section \ref{sec:insep} relates the flat Jacobian $J^*_{n,d}$ to inseparability, proving Corollary \ref{cor_intro_insep}. This section may be read independently from Sections \ref{sec:eg} and \ref{sec:morphisms}.

\section*{Acknowledgments}
We are grateful to Joseph Silverman and Patrick Ingram for their patient explanation of the methods in \cite{IRS}. We also thank Richard Birkett, Nathan Chen, Laura DeMarco, Anna Dietrich, Sarah Koch, Aaron Landesman, John Lesieutre, Curt McMullen, Yohsuke Matsuzawa, and Shuyang Shen for helpful conversations at various stages of this project. Max Weinreich was supported by NSF Grant 2202752.

\section{Preliminaries} \label{sec:prelim}

All algebro-geometric definitions in this paper are consistent with the textbook of Hartshorne \cite{Hartshorne}. In particular, a \emph{variety} is an integral, separated scheme over an algebraically closed field. An important edge case is that the empty scheme $\varnothing$ is reduced, but not irreducible (the nilradical of the zero ring is not prime); hence, the empty scheme is not integral, and not a variety.
Beware that $0 \in K[\bf x]$ is not irreducible as a ring element, whereas $V(0) = \PP^n$ \emph{is} irreducible as a scheme.
\subsection{Facts about polynomials}

In this section, we adopt the following notations:
\begin{center}
\begin{tabular}{cl}
    $R$ & commutative unital ring \\
    $R[z]$ & univariate polynomial ring over $R$ \\
    $R[\mathbf z]$ & multivariate polynomial ring over $R$
\end{tabular}
\end{center}

We recall some well-known facts about polynomial rings that appear in our proofs.

\begin{lem}[Euler's theorem] \label{euler}
Let $\Phi \in R[\mathbf z]$ be a homogeneous polynomial (i.e.\ a \emph{form}).
Then: 
\[\sum_i z_i \frac{\partial \Phi}{\partial z_i} = (\deg \Phi) \Phi.\]
\end{lem}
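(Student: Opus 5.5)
Since both sides of the identity are $R$-linear in $\Phi$, and the derivations $\partial/\partial z_i$ are $R$-linear as well, I will reduce to the case of a single monomial. Write the form $\Phi$ of degree $m$ as
\[
\Phi = \sum_{\alpha} c_\alpha \mathbf z^\alpha, \qquad c_\alpha \in R,
\]
where the sum runs over exponent vectors $\alpha = (\alpha_1, \ldots, \alpha_k)$ with $\abs{\alpha} := \sum_i \alpha_i = m$. The operator $\Phi \mapsto \sum_i z_i \,\partial\Phi/\partial z_i$ is $R$-linear. The right-hand side $m\Phi$ is also $R$-linear in $\Phi$, once the degree $m$ is fixed. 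So it suffices to verify the identity for $\Phi = \mathbf z^\alpha$ with $\abs{\alpha} = m$.

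For a monomial, the formal derivative gives $\partial \mathbf z^\alpha/\partial z_i = \alpha_i\, \mathbf z^{\alpha - e_i}$. When $\alpha_i = 0$ this should be read as $0$, and that reading is automatic from the definition of the formal derivative. It follows that $z_i\,\partial \mathbf z^\alpha/\partial z_i = \alpha_i\,\mathbf z^\alpha$, where $\alpha_i$ denotes the image of the integer $\alpha_i$ in $R$. Summing over $i$ gives
\[
\sum_i z_i \frac{\partial \mathbf z^\alpha}{\partial z_i} = \Bigl(\sum_i \alpha_i\Bigr)\mathbf z^\alpha = m\,\mathbf z^\alpha,
\]
again with $m$ interpreted in $R$. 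This is the identity for monomials, and linearity finishes the argument.

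The only subtle point is bookkeeping. Over an arbitrary commutative ring the formal derivative is defined coefficientwise, so no analytic argument such as differentiating $\Phi(t\mathbf z) = t^m\Phi(\mathbf z)$ is needed. That route would need extra justification in positive characteristic, whereas the monomial computation is purely formal.

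I should also address the edge case $\Phi = 0$, where $\deg \Phi$ is undefined or conventionally arbitrary. Both sides vanish there, so the identity holds for any choice of convention. There is no substantive obstacle in this proof.
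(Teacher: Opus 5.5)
Your proposal is correct and follows the same approach as the paper: reduce by $R$-linearity on forms of fixed degree to monomials, then compute directly. Your remark about the $\Phi = 0$ edge case is a harmless extra that the paper leaves implicit.
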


\begin{proof}
Both sides are linear in $\Phi$ of fixed (total) degree, and the claim is immediate for monomials.
\end{proof}

Recall that an element $r$ of an integral domain $R$ is said to be \emph{irreducible} if and only if the following all hold: 
(i) $r \ne 0$, 
(ii) $r \not \in R^\times$, 
(iii) $r = ab$ implies $a$ or $b$ is a unit.

\begin{lem}[a geometric irreducibility criterion] \label{geo irr}
Let $R$ be an integral domain
and let $\ev : R[\mathbf z, \mathbf w] \to R[\mathbf z]$ be the evaluation map sending each $\bf w$ to $0$.
Suppose $\Phi \in R[\mathbf z, \mathbf w]$ is homogeneous (in all the variables).
If $\ev(\Phi)$ is irreducible, then $\Phi$ is irreducible.
\end{lem}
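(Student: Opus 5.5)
The plan is to check the three conditions of irreducibility directly, using that $\ev$ is a ring homomorphism which preserves the degree of homogeneous polynomials. Let $m = \deg \Phi$.

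First, $\Phi \neq 0$, because $\ev(0)=0$ while $\ev(\Phi)$ is irreducible and hence nonzero. Second, $\Phi$ is not a unit: the units of $R[\mathbf z,\mathbf w]$ are the units of $R$ (since $R$ is a domain), and $\ev$ fixes $R$. So if $\Phi$ were a unit, $\ev(\Phi)=\Phi$ would also be a unit, contradicting irreducibility of $\ev(\Phi)$.

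The main step is condition (iii). Suppose $\Phi = AB$. Since $R$ is a domain, so is $R[\mathbf z,\mathbf w]$, and a factor of a nonzero homogeneous polynomial in a polynomial ring over a domain is itself homogeneous. To see this, take the lowest- and highest-degree homogeneous components of $A$ and $B$. Their products give the lowest- and highest-degree components of $AB$, and these products are nonzero because we are in a domain. Since $AB$ is homogeneous of degree $m$, these two degrees coincide, which forces $A$ and $B$ to each be homogeneous, say of degrees $a$ and $b$ with $a+b=m$.

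Now apply $\ev$ to get $\ev(\Phi)=\ev(A)\ev(B)$. Here $\ev(A)$ is either $0$ or homogeneous of degree $a$ in $\mathbf z$, and similarly for $\ev(B)$. Neither can be $0$, since $\ev(\Phi)\neq 0$. Because $\ev(\Phi)$ is irreducible, one factor, say $\ev(A)$, is a unit, hence a nonzero constant, so $a=0$. Then $A$ is homogeneous of degree $0$, i.e. $A\in R$, and $A=\ev(A)$ is a unit of $R$, hence a unit of $R[\mathbf z,\mathbf w]$. This proves (iii).

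The only genuinely substantive point is that factors of homogeneous polynomials are homogeneous. This is where both the homogeneity hypothesis on $\Phi$ and the integral domain hypothesis on $R$ are used. Without homogeneity, a factor could have positive degree only in the $\mathbf w$ variables while still evaluating to a unit; for example, $\Phi = 1 + w$ is not itself a counterexample, but $(1+w)z$ is. Homogeneity rules out this possibility.
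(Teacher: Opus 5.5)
Your proof is correct and takes the same approach as the paper. The paper's one-line proof (``the evaluation of a factorization is also a factorization of the evaluation'') tacitly relies on exactly what you make explicit: factors of a nonzero homogeneous polynomial over a domain are homogeneous, and $\ev$ preserves their degrees, so a nontrivial factorization of $\Phi$ stays nontrivial after evaluation.
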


\begin{proof}
The evaluation of a factorization is also a factorization of the evaluation.
\end{proof}

\begin{lem}[a linear irreducibility criterion] \label{lin irr}
Let $R$ be an integral domain and let $a, b \in R$ with $a \ne 0$.
Then $az + b \in R[z]$ is irreducible
if and only if $a$ and $b$ are coprime, i.e.\ the only principal ideal containing $a$ and $b$ is the unit ideal.
\end{lem}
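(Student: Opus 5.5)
The plan is to prove both directions directly from the definition of irreducibility in the integral domain $R[z]$, using that degree is additive: $R$ is a domain, so $\deg(gh) = \deg g + \deg h$ for nonzero $g, h \in R[z]$. Since $a \neq 0$, the element $\Phi := az + b$ is nonzero of degree exactly $1$. It is also not a unit, because units of $R[z]$ over a domain are the constants in $R^\times$, which have degree $0$. So conditions (i) and (ii) of irreducibility hold automatically, and everything reduces to condition (iii).

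Suppose $\Phi = gh$. By degree additivity one factor, say $g$, has degree $0$, so $g = c \in R$, and the other is $h = a'z + b'$. Comparing coefficients gives $a = ca'$ and $b = cb'$. Hence every factorization of $\Phi$ has the form $c \cdot (a'z+b')$ with $c$ a common divisor of $a$ and $b$. Moreover, $h$ is never a unit, since it has degree $1$. Therefore $\Phi$ is irreducible if and only if every common divisor $c \in R$ of $a$ and $b$ is a unit of $R$. Here I use that a unit of $R$ is also a unit of $R[z]$, and that an element of $R$ which is a unit in $R[z]$ is already a unit in $R$.

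It remains to match this with the stated notion of coprimality. The principal ideals $(c)$ containing both $a$ and $b$ are exactly those with $c$ a common divisor of $a$ and $b$. The condition that each such $(c)$ is the unit ideal is exactly the condition that each such $c$ is a unit.

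For the converse direction, suppose $a$ and $b$ are coprime. Any factorization $\Phi = gh$ has, by the above, a constant factor $c$ dividing both $a$ and $b$, so $c$ is a unit. Conversely, suppose $a$ and $b$ are not coprime, and pick a non-unit common divisor $c$, writing $a = ca'$ and $b = cb'$. Then $\Phi = c(a'z+b')$ is a factorization into two non-units, since $a' \neq 0$ keeps $a'z+b'$ of degree $1$. So $\Phi$ is reducible.

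There is no real obstacle. The only care needed is to invoke the domain hypothesis twice: for degree additivity, and for identifying the units of $R[z]$ with $R^\times$.
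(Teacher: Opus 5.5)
Your proof is correct and follows essentially the same route as the paper. In both, degree additivity forces any factorization of $az+b$ to have a constant factor dividing both $a$ and $b$; conversely, a common divisor $c$ yields the factorization $c\,(a'z+b')$, whose linear factor is a non-unit because $R[z]^\times = R^\times$. The differences are cosmetic: you explicitly verify that $az+b$ is a nonzero non-unit, which the paper leaves implicit, and you phrase the converse contrapositively.
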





\begin{proof}
Suppose $a$ and $b$ are coprime. 
Write $az + b = pq$ for some $p, q$ in $R[z]$ with $\deg p \leq \deg q$, say.
Since $a \ne 0$, we have $\deg (az + b) = 1$. 
Since $R$ is an integral domain, $\deg (pq) = \deg p + \deg q$. 
Thus $\deg p + \deg q = 1$, 
and it follows that $\deg p = 0$ and $q = rz + s$ for some $r, s$ in $R$.
Then $az + b = prz + ps$, so comparing coefficients shows $a = pr$ and $b = ps$. 
Thus $p$ divides $a$ and $b$ in $R$, so by coprimality $p$ divides $1$, i.e.\ $p$ is a unit.

Conversely, suppose $az + b$ is irreducible.
Let $d \in R$ be a common divisor of $a$ and $b$, say $a = md$ and $b = nd$.
Then $az + b = (mz + n)d$, so one of $mz + n$ or $d$ is a unit. 
But since $R$ is an integral domain, $R[z]^\times = R^\times$; 
and since $a \ne 0$, we have $m \ne 0$. 
Thus $mz + n$ is not a unit, so $d$ is a unit.
\end{proof}

\subsubsection{Projective roots}

In this subsubsection, we prove an elementary but technical lemma that is used only in Section \ref{sec:eg}.

Let $K$ be an algebraically closed field. A \emph{projective root} of a binary form $G \in K[x, y]$ is a point of $V(G)(K) \subseteq \PP^1$, 
i.e.\ a point $[\alpha : \beta]$ such that $(\alpha, \beta) \in K^2 \smallsetminus \{(0, 0)\}$ and $G(\alpha, \beta) = 0$. 
If $[\alpha : \beta]$ is a projective root of $G$, 
then $G$ is divisible by the linear form $\beta x - \alpha y$. 
When $G \ne 0$ is divisible by $(\beta x - \alpha y)^2$ we say $[\alpha : \beta]$ is a \emph{repeated root}; 
in this case the scheme $V(G)$ is non-reduced (equivalently, singular) at $[\alpha:\beta]$.

Recall that the \emph{resultant} of two homogeneous polynomials 
\[G = u_p x^p + \ldots + u_0 y^p \quad\text{and}\quad H = v_q x^q + \ldots + v_0 y^q\]
is the determinant $\Res_{p,q}(G, H)$ of the $(q+p)$-by-$(q+p)$ Sylvester matrix 
\[
\begin{bmatrix}
  u_p  &        &        &   v_q  &        &        \\
\vdots & \ddots &        & \vdots & \ddots &        \\
\vdots &        &   u_p  & \vdots &        &   v_q  \\
  u_0  &        & \vdots &   v_0  &        & \vdots \\
       & \ddots & \vdots &        & \ddots & \vdots \\
       &        &   u_0  &        &        &   v_0  
\end{bmatrix}
\]
where there are $q$ columns of $u$'s and $p$ columns of $v$'s \cite[p.~77]{Cox}.
If $G$ and $H$ are defined over $K$, 
then $\Res(G, H) = 0$ if and only if $G$ and $H$ have a common projective root over $K$.

The resultant is a generalization of the determinant to nonlinear forms. 
It has a number of useful properties that assist in its calculation, including:

\begin{itemize}
\item (\textit{multiplicativity}) $\Res(A, BC) = \Res(A, B) \Res(A, C)$;
\item (\textit{homogeneity}) $\Res(aA, bB) = a^{\deg B} b^{\deg A} \Res(A, B)$; 
\item (\textit{congruence}) $\Res(A, B) = \Res(A, C)$ whenever $B \equiv C$ (mod $A$) and $\deg B = \deg C$.
\end{itemize}

Recall that the \emph{discriminant} of a homogeneous polynomial 
\[G(x, y) = u_p x^p + u_{p-1} x^{p-1} y + \ldots + u_0 y^p\]
is a certain polynomial $\Disc(G)$ in the coefficients of $G$ that vanishes in $K$ if and only if $G$ has a repeated root over $K$.
For generic $G$, the discriminant may be defined by
\[
\Disc(G) := \frac{\Res(G, \partial_x G)}{\Res(G, y)} \in \ZZ[{\bf u}].
\]
Beware that the sign differs from the usual convention, e.g.\ $\Disc(ax^2 + bxy + cy^2) = 4ac - b^2$. 
See \cite[3.1.4]{BJ} for details.

\begin{lem} \label{res jac disc}
Let $R$ be a (commutative, unital) ring 
and let $G, H \in R[x, y]$ be nonzero homogeneous polynomials (not necessarily of the same degree).
Then 
\[\Res\bigg(\! G, \begin{vmatrix} \partial_x G & \partial_y G \\ 
\partial_x H & \partial_y H \end{vmatrix}\bigg)
= (\deg H)^{\deg G} \Disc(G) \Res(G, H).
\]
In particular, 
\[
\Res(G, (\partial_x G) y^k + (\partial_y G) x^k) = \Disc(G) \Res(G, y^{k+1} - x^{k+1})
\]
for all $k \ge 0$.
\end{lem}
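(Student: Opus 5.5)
The plan is to reduce to the generic case over a polynomial ring with indeterminate coefficients and then use the Poisson product formula for resultants over an algebraic closure. Both sides are polynomial identities in the coefficients of $G$ and $H$ with integer coefficients, so it suffices to prove the identity for generic $G=\sum u_i x^i y^{p-i}$ and $H=\sum v_j x^j y^{q-j}$ over $\ZZ[\mathbf u,\mathbf v]$. Since that ring embeds in an algebraically closed field, we may work there. Generically $u_p\neq 0$, so $G$ factors as
\[
G=u_p\prod_{i=1}^p (x-\alpha_i y).
\]
We then use the product formula
\[
\Res(G,B)=u_p^{\deg B}\prod_i B(\alpha_i,1)
\]
for any form $B$, with sign conventions matching the Sylvester matrix as displayed; this is standard and consistent with the multiplicativity and homogeneity rules already listed.

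The key step is to evaluate the Jacobian determinant at a root $(\alpha_i,1)$ of $G$. Euler's theorem (Lemma~\ref{euler}) gives
\[
x\,\partial_x G+y\,\partial_y G=pG \quad\text{and}\quad x\,\partial_x H+y\,\partial_y H=qH.
\]
Multiply the first column of the determinant by $x$ and add $y$ times the second column. This gives
\[
x\begin{vmatrix}\partial_xG&\partial_yG\\ \partial_xH&\partial_yH\end{vmatrix}
=\begin{vmatrix} pG&\partial_yG\\ qH&\partial_yH\end{vmatrix}.
\]
At $(\alpha_i,1)$ the entry $G$ vanishes, so the Jacobian equals $-qH\,\partial_yG/x$ there. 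Similarly, multiplying the second column by $y$ and adding $x$ times the first gives the Jacobian equal to $qH\,\partial_xG/y$ at roots of $G$. With $y=1$ we get $q\,H(\alpha_i,1)\,\partial_xG(\alpha_i,1)$. Taking the product over $i$ and using the product formula, and noting that the Jacobian has degree $p+q-2$,
\[
\Res(G,\mathrm{Jac})=q^p\,\Res(G,\partial_xG)\,\Res(G,H)\,u_p^{-(p-1)}u_p^{-q}u_p^{p+q-2}.
\]
The leftover $u_p^{-1}$ should match $\Res(G,y)^{-1}$, up to a sign fixed by the paper's conventions. So the result is $q^p\,\Disc(G)\,\Res(G,H)$. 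Genericity is removed by the Zariski-density of $u_p\neq 0$ (or directly by polynomial identity), and the formula then specializes to any ring $R$.

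For the ``in particular'' statement, take $H=y^{k+1}-x^{k+1}$. Then $\partial_xH=-(k+1)x^k$ and $\partial_yH=(k+1)y^k$, so the Jacobian is
\[
(k+1)\bigl(y^k\,\partial_xG+x^k\,\partial_yG\bigr).
\]
Homogeneity pulls out $(k+1)^{\deg G}$, which cancels the factor $(\deg H)^{\deg G}$ on the right. This cancellation must be done as a polynomial identity before specializing, since $k+1$ might be zero in $R$. Equivalently, divide in $\ZZ[\mathbf u]$, where $k+1$ is a nonzerodivisor, and then specialize.

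The main obstacle is sign bookkeeping. We need the product formula's sign to be compatible with the Sylvester-matrix convention and with the paper's nonstandard sign for $\Disc$, which is defined as $\Res(G,\partial_xG)/\Res(G,y)$ precisely so that these signs agree. To settle it, I would check a small case, e.g.\ $G=x$, $H=y$, where the Jacobian is $1$ and $\Disc(x)=\Res(x,1)/\Res(x,y)$.
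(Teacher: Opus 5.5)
Your proof is correct. Its core is the same as the paper's: Euler's relations give $yJ \equiv q\,G_x H \pmod G$ (your second column operation). One then compares with $\Res(G,G_x)\Res(G,H)$ and divides out $u_p=\Res(G,y)$.

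The difference is in how you turn that congruence into a resultant identity.
\begin{itemize}
\item The paper never leaves $\ZZ[\mathbf u,\mathbf v]$. It uses multiplicativity to write $\Res(G,y)\Res(G,J)=\Res(G,yJ)$. It then applies the congruence rule to replace $yJ$ by $qG_xH$, after checking $\deg(yJ)=\deg(qG_xH)=p+q-1$. Homogeneity and multiplicativity finish the computation, and $u_p$ is cancelled in the domain.
\item You pass to an algebraic closure of the fraction field. There you use the product formula $\Res(G,B)=u_p^{\deg B}\prod_i B(\alpha_i,1)$, evaluate $J$ root by root, and track the powers of $u_p$ explicitly.
\end{itemize}
Your route makes it more transparent where each factor comes from. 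The paper's route uses only the three formal rules it has listed and avoids roots and field extensions; those rules are themselves consequences of the product formula.

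Two small remarks. First, the sign bookkeeping you flag as the main obstacle is not an issue. The product formula you quote is exactly right for the displayed Sylvester matrix, which is the transpose of the standard one with $G$ first. Under that formula $\Res(G,y)=u_p\prod_i 1=u_p$ exactly, so the leftover $u_p^{-1}$ is precisely $\Res(G,y)^{-1}$ with no sign to fix. Second, your first column operation, giving $-qH\,\partial_yG/x$, is unnecessary and would require $\alpha_i\neq 0$. Only the second one, with $y=1$, is actually used.

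Your handling of the ``in particular'' clause, cancelling $(k+1)^p$ in $\ZZ[\mathbf u]$ before specializing, matches the paper.
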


\begin{proof} 
By specialization, it suffices to work over $R = \ZZ[{\bf u}, {\bf v}]$ with
\[G = u_p x^p + \ldots + u_0 y^p \quad\text{and}\quad H = v_q x^q + \ldots + v_0 y^q\]
universal forms.
For brevity write $\partial_x G$ as $G_x$, etc.
Then 
\[J := \begin{vmatrix} G_x & G_y \\ H_x & H_y \end{vmatrix} = G_x H_y - G_y H_x.\]
By Euler's theorem \ref{euler} applied to $G$ and then $H$,
\begin{align*}
yJ &= yG_x H_y - y G_y H_x \\
&\equiv yG_x H_y + xG_x H_x \pmod G \\
&= G_x (yH_y + xH_x) \\ 
&= q G_x H.
\end{align*}
Moreover,
\[\deg {(yJ)} = 1 + (p-1) + (q-1) = (p-1) + q = \deg {(q G_x H)}\]
are equal.
Therefore 
\begin{align*}
\Res(G, y) \Res(G, J)
&= \Res(G, yJ) & \text{by multiplicativity} \\
&= \Res(G, q G_x H) & \text{by congruence} \\
&= q^p \Res(G, G_x H) & \text{by homogeneity} \\
&= q^p \Res(G, G_x) \Res(G, H) & \text{by multiplicativity} \\
&= q^p \Disc(G) \Res(G, y) \Res(G, H) & \text{by definition.}
\end{align*}
Now $\Res(G, y) = u_p$ is the leading $x$-coefficient of $G$;
since it is nonzero, we may cancel it and obtain 
\[
\Res(G, J) = q^p \Disc(G) \Res(G, H) \tag{*}
\]
in $\ZZ[{\bf u}, {\bf v}]$. This proves the first part. 
For the second, apply (*) with $H := y^{k+1} - x^{k+1}$ and cancel the emergent common factor $(k+1)^p$.
\end{proof}


\subsection{Spaces of forms} \label{sec:prelim forms}

In this section, we adopt the following notation:
\begin{center}
\begin{tabular}{cl}
    $n, d$ & non-negative integers \\
    $K$ & an algebraically closed field. \\
\end{tabular}
\end{center}

Given a $K$-vector space $V$, let $\Sym(V)$ be the symmetric algebra on $V$ over $K$. Then $\Sym(V)$ is a polynomial ring, and the number of indeterminates is $\dim V$.
 
The projectivization of $V$ is the variety
\[\PP(V) := \Proj(\Sym(V)) \cong \PP^{\dim V - 1}_K.\]
Concretely, the set of $K$-points of $\PP(V)$ 
is the set $(V \smallsetminus \{0\}) / K^\times$ of equivalence classes $[v]$ of nonzero vectors modulo nonzero scalars.

Recall that a \emph{form} is a homogeneous polynomial.
Let $K[{\bf x}]_d$ denote the vector subspace of $K[{\bf x}]$ consisting of forms of degree $d$ (including $0$).
Let 
$$\F^n_d := \PP(K[\mathbf{x}]_d),$$
$$\H^n_d := \left\{[\Phi] \in \F^n_d(K) : \Phi \textrm{ is irreducible} \right\}.$$

\begin{propn} \label{Hnd nonempty}
The set $\H^n_d$ is Zariski open, and is nonempty if and only if $d = 1$ or if $n \geq 2$ and $d \geq 2$.
\end{propn}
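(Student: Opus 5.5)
The proof has two parts: openness, and the characterization of when $\H^n_d$ is nonempty.

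\emph{Openness.} I show that the complement of $\H^n_d$ in $\F^n_d$ is closed. A nonzero form of degree $d$ is reducible exactly when it is a product $\Phi_1\Phi_2$ of forms of degrees $a$ and $d-a$ with $1 \le a \le d-1$, since a factor of a form is itself a form. For each such $a$, multiplication gives a morphism
\[
\mu_a : \F^n_a \times \F^n_{d-a} \to \F^n_d,\qquad ([\Phi_1],[\Phi_2]) \mapsto [\Phi_1\Phi_2],
\]
which is well defined because $K[\mathbf x]$ is a domain, so $\Phi_1\Phi_2 \neq 0$. The source is projective, so $\mu_a$ is proper and its image is closed. The complement of $\H^n_d$ is the finite union of these images, hence closed. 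The one point to check is that every class in $\F^n_d(K)$ is a nonzero form, so the only failure of irreducibility is a nontrivial factorization; units are the nonzero scalars, which are absorbed projectively.

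\emph{Nonemptiness, sufficiency.} For $d = 1$, every nonzero linear form is irreducible. For $n \ge 2$ and $d \ge 2$, I exhibit $\Phi = x_0^{d} + x_1^{d-1}x_2$ and apply the linear criterion of Lemma \ref{lin irr}. View $\Phi$ as $a z + b$ with $z = x_2$, $a = x_1^{d-1}$, and $b = x_0^d$ in $R = K[x_0, x_1, x_3, \ldots, x_n]$. Here $a \neq 0$, and $a$ and $b$ are coprime in the UFD $R$ because $x_0$ and $x_1$ are distinct primes. Hence $\Phi$ is irreducible in $R[x_2] = K[\mathbf x]$. (One could instead use Lemma \ref{geo irr} and set $x_3, \ldots, x_n$ to zero, but the linear criterion already handles all $n$ directly.)

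\emph{Nonemptiness, necessity.} I split into the remaining cases.
\begin{itemize}
\item If $d = 0$, the only forms are nonzero constants, which are units, so $\H^n_0 = \varnothing$.
\item If $n = 0$, every form is $c x_0^d$, which is reducible when $d \ge 2$.
\item If $n = 1$ and $d \ge 2$, every binary form over the algebraically closed field $K$ factors into linear forms: dehomogenize and use algebraic closedness, noting that the power of $x_1$ that appears accounts for any root at infinity. So it is reducible.
\end{itemize}
Together these show that $\H^n_d$ is empty exactly outside the stated range.

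The main obstacle is the closedness argument: one must make sure the multiplication maps are honest morphisms of projective varieties, so that properness applies. This follows because the multiplication is bilinear in the coefficients, and therefore defines a morphism on products of projective spaces.
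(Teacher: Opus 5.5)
Your proof is correct and follows essentially the same route as the paper. The reducible locus is a finite union of images of multiplication maps out of products of the projective spaces $\F^n_a$; you use only two factors where the paper uses all partitions with $r \ge 2$, which is equivalent. Nonemptiness comes from the same form $x_0^d + x_1^{d-1}x_2$ via Lemma~\ref{lin irr}, and emptiness for $n \le 1$ from the splitting of binary forms.

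There is one small slip. For $d = 0$, your description of the complement as the (empty) union of images is wrong, because the constant form is a unit rather than a nontrivial product. You show $\H^n_0 = \varnothing$ anyway, so openness is immediate there. The paper avoids the issue by disposing of $d \in \{0,1\}$ before invoking the multiplication maps.
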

\begin{proof}
First we handle the trivial cases.
If $d=0$, then $\mathbf{F}^n_0 = \{[1]\}$; but since irreducibles are non-units, $\H^n_0 = \varnothing$.
If $d=1$, then $\H^n_1 = \F^n_1 \neq \varnothing$ because all linear forms are irreducible.
So, assume $d \ge 2$.
For any partition $d_1 + \ldots + d_r = d$, consider the multiplication map
\begin{gather*}
    \mu : \F^n_{d_1} \times \ldots \times \F^n_{d_r} \to \F^n_d \\
    ([\Phi_1], \ldots, [\Phi_r]) \mapsto [\Phi_1 \ldots \Phi_r].
\end{gather*}
Then
\[\H^n_d = \F^n_d \smallsetminus \bigcup \, {\im {\mu}}\]
where the union runs over all $\mu$ associated to partitions with $r \geq 2$. 
Thus $\H^n_d$ is Zariski open. 
Now we check when $\H^n_d$ is empty, by cases. 
If $n \leq 1$ then $\H^n_d = \varnothing$ because all binary forms split completely ($K$ is algebraically closed); 
whereas if $n \geq 2$ then $\H^n_d$ contains $[x_0^d + x_1^{d-1} x_2]$ by Lemma \ref{lin irr}.
\end{proof}

\subsection{Spaces of maps} \label{sec:prelim maps}

In this subsection, we describe parameter spaces of self-maps of degree $d$ of projective space $\PP^n$ over
an algebraically closed field $K$. To avoid trivialities, assume $n \ge 1$. 

A tuple $(F_0, \ldots, F_m)$ in $K[\mathbf{x}]^{m+1}$ consisting of forms all of the same degree and not all $0$ defines a rational map
\[f : \PP^n \dashrightarrow \PP^m\]
\[[x_0 : \ldots : x_n] \mapsto [F_0(x_0, \ldots, x_n) : \ldots : F_m(x_0, \ldots, x_n)].\]
Every rational map $f : \PP^n \dashrightarrow \PP^m$ may be represented this way, and the defining tuple is unique up to multiplication by elements of $K[\mathbf{x}] \smallsetminus \{0\}$.
If $f$ is representable by a tuple $F$ of forms of degree $d$ with no common factor, we say that $f$ has \emph{degree $d$}, and we call $F$ a \emph{lift} of $f$.

A rational map $f : X \dashrightarrow Y$ of varieties has a maximal domain of definition $U_f$, which is a nonempty open subset of $X$. The indeterminacy locus of $f$ is the closed proper subvariety $I_f := X \smallsetminus U_f$. A rational map $f$ is a morphism if and only if $I_f = \varnothing$.

In the case of a map $f : \PP^n \dashrightarrow \PP^m$, we may view $I_f$ as the closed subscheme $V_{\PP^n}(F_0, \ldots, F_m)$, where $(F_0, \ldots, F_m)$ is any lift of $f$.
Further, in this situation $\dim I_f \leq n - 2$ because components of codimension 1 yield common factors.

We now define spaces of rational self-maps and endomorphisms. We construct these as open subsets of the projective space
\[\Bar{\End}^n_d := \PP(K[\mathbf{x}]_d^{n+1}).\]
Let 
\[\Rat^n_d := \{[F] \in \Bar\End^n_d : [F] \text{ defines a rational map } \PP^n \dashrightarrow \PP^n \text{ of degree } d\}\]
and 
\[\End^n_d := \{[F] \in \Bar\End^n_d : [F] \text{ defines a morphism } \PP^n \to \PP^n \text{ of degree } d \}. \]

The next lemma shows how to view these spaces as varieties.

\begin{lem} \label{rat end qp}
For all $n \ge 1$ and $d \ge 0$, the sets
\[\End^n_d \subseteq \Rat^n_d \subseteq \Bar{\End}^n_d\]
are nonempty and Zariski-open.
\end{lem}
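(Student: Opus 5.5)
I will treat openness and nonemptiness separately, handling $\Rat^n_d$ with the multiplication maps from Proposition \ref{Hnd nonempty} and $\End^n_d$ with the Macaulay resultant.

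\textbf{Openness of $\Rat^n_d$.} A point $[F] \in \Bar\End^n_d$ fails to lie in $\Rat^n_d$ exactly when the $F_i$ share a common factor of positive degree. Since the $F_i$ are not all zero, any common factor has degree $e$ with $1 \le e \le d$. For each such $e$, consider the multiplication map
\[
\mu_e : \F^n_e \times \PP(K[\mathbf x]_{d-e}^{n+1}) \to \Bar\End^n_d, \qquad ([\Phi], [G_0 : \ldots : G_n]) \mapsto [\Phi G_0 : \ldots : \Phi G_n].
\]
This is a morphism of projective varieties, because it is bilinear on the underlying vector spaces and never sends a pair of nonzero vectors to zero. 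By properness its image is closed, and
\[
\Rat^n_d = \Bar\End^n_d \smallsetminus \bigcup_{e=1}^d \im \mu_e,
\]
so $\Rat^n_d$ is open. When $d=0$ there are no such $e$, so $\Rat^n_0 = \Bar\End^n_0$.

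\textbf{Openness of $\End^n_d$.} Use the incidence variety
\[
Z = \{([F],[x]) : F_i(x) = 0 \text{ for all } i\} \subseteq \Bar\End^n_d \times \PP^n.
\]
It is closed, being cut out by bihomogeneous equations. Its projection to $\Bar\End^n_d$ is closed, since $\PP^n$ is proper. That projection is precisely the set of tuples with a common zero, and its complement is $\End^n_d$. One checks that a tuple with no common zero has no common factor: for $n \ge 1$, a common factor of positive degree vanishes somewhere on $\PP^n$. The only exception is $d = 0$, where a constant tuple has no zero and no nonconstant factor, so the two sets agree there as well. Hence $\End^n_d \subseteq \Rat^n_d$. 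Alternatively, $\End^n_d$ is the nonvanishing locus of the Macaulay resultant.

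\textbf{Nonemptiness.} The power map $[x_0^d : \ldots : x_n^d]$ has no common zero, since $x_i^d = 0$ for all $i$ forces $x = 0$. So it lies in $\End^n_d$, and therefore in $\Rat^n_d$ and $\Bar\End^n_d$. This also covers $d=0$, where the map is the constant tuple $[1:\ldots:1]$.

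The only step requiring any care is verifying that $\mu_e$ and the projection of $Z$ really are morphisms with closed image, that is, invoking properness of projective space correctly. Everything else is routine.
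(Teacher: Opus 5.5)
Your proof is correct and follows the same route as the paper: multiplication maps $\F^n_e \times \Bar\End^n_{d-e} \to \Bar\End^n_d$ for $\Rat^n_d$, elimination for $\End^n_d$ (the paper cites the Macaulay resultant, while you spell out the underlying properness of $\PP^n$), and the power map for nonemptiness. Your range $1 \le e \le d$ is in fact the right one. The paper writes $0 < e < d$, which misses the tuples $(c_0\Phi, \ldots, c_n\Phi)$ with $\Phi$ irreducible of degree $d$ (for instance the rank-one matrices when $d=1$, as in the paper's own example), though this does not affect openness.
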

\begin{proof}
To see that $\Rat_d^n$ is a Zariski-open subset of $\Bar{\End}_d^n$, observe that
$$\Rat_d^n = \{[F] \in \Bar{\End}^n_d : \gcd(F_0, \ldots, F_n) = (1)\}.$$
Note that the gcd condition is well-defined on projective equivalence classes. 
Now, the complement of $\Rat_d^n$ is the union over $0 < e < d$ of the images of the multiplication maps
\begin{gather*} 
    \F^n_e \times \Bar\End^n_{d-e} \to \Bar\End^n_d \\
    ([\Phi], [G]) \mapsto [\Phi G_0 : \ldots : \Phi G_n].
\end{gather*}
So $\Rat_d^n$ is open. The set $\End_d^n$ is open as well, because it is the complement of the vanishing locus of the Macaulay resultant of $F_0, \ldots, F_n$.
For details, see \cite[Section 2]{Levy} or \cite[Chapter 3]{Cox}. Since $\End_d^n$ contains $[x_0^d : \ldots : x_n^d]$, it is nonempty.
\end{proof}

\begin{eg}[$d=1$]
Let $\mathsf{M}_{n+1}$ denote the space of $(n+1)$-by-$(n+1)$ square matrices. 
We may identify $\Bar{\End}^n_1$ with $\PP(\mathsf{M}_{n+1})$ via the map 
\[
F = (a_{00} x_0 + \ldots + a_{0n} x_n, \ldots, a_{n0} x_0 + \ldots + a_{nn} x_n)
\mapsto 
D_F := \begin{bmatrix}a_{00} & \ldots & a_{0n} \\ \vdots & \ddots & \vdots \\ a_{n0} & \ldots & a_{nn} \end{bmatrix}
\] 
on affine lifts.
Under this identification, we have
\begin{align*}
    \Rat^n_1 &= \{[F] : \rk D_F > 1\}
    \shortintertext{and} 
    \End^n_1 &= \{[F] : \det D_F \ne 0\} \cong \PGL_{n+1}.
\end{align*}
To wit, the linear forms $F_j = a_{j0} x_0 + \ldots + a_{jn} x_n$ are \emph{not} coprime if and only if they are all scalar multiples of a single form $G = b_0 x_0 + \ldots + b_n x_n$. In this case, $F_j = a_j G$ for some scalars $a_j$ (not all zero), and the matrix 
\[D_F 
= \begin{bmatrix}a_{00} & \ldots & a_{0n} \\ \vdots & \ddots & \vdots \\ a_{n0} & \ldots & a_{nn} \end{bmatrix} = 
\begin{bmatrix} a_0 b_0 & \ldots & a_0 b_n \\ \vdots & \ddots & \vdots \\ a_n b_0 & \ldots & a_n b_n\end{bmatrix} 
= 
\begin{bmatrix} 
    a_0 \\
    \vdots \\
    a_n
\end{bmatrix} 
\begin{bmatrix} b_0 & \ldots & b_n \end{bmatrix} \]
has rank 1.
More generally, 
\begin{align*}
    \Rat^n_d &= \{[F] : \codim I_F > 1\}
\end{align*}
where $I_F := V_{\PP^n}(F_0, \ldots, F_n)$.
\end{eg}

\section{Critical loci and Jacobians} \label{sec:CJ}

\subsection{The critical scheme of a rational map}

In this section, we define the critical locus $C_f$ of a morphism $f: X \to Y$ of smooth varieties of the same dimension, and then we recall the textbook definition of its scheme structure. 
These constructions appear throughout algebraic geometry, especially in the study of ramification; see e.g.\ \cite[Example 3.2.20]{fulton}.

Intuitively, one would like to interpret $C_f$ as the closed subscheme of $X$ defined by the determinant of the differential $f'$ of $f$. However, some care is required in the definition since the familiar differential $TX \to TY$ between tangent bundles from manifold theory is not a vector bundle homomorphism in the sense of algebraic geometry---the domain and codomain are vector bundles over different base spaces.
For background on the necessary vector bundle operations, see \cite[Chapter 14, pp.\ 241--243 and Example 14.4.8]{fulton}.

We first briefly recall how to work with differentials in algebraic geometry; see e.g.\ \cite[Chapter II.8]{Hartshorne}.
Given a $B$-module $A$, let $\Omega_{B/A}$ denote the $B$-module of relative K\"ahler differentials of $B$ over $A$. Given a scheme $V$ over $K$, let $\Omega_V := \Omega_{V/ \Spec K}$ denote the $\mathcal{O}_V$-module of relative differentials. When $V$ is a nonsingular variety, the \emph{cotangent bundle} $T^*V$ is the $K$-vector bundle $\Omega_V$.
Finally, given 
a map of $K$-schemes $f: V \to W$, let $\Omega_{V/W}$ denote the $\mathcal{O}_V$-module of relative differentials. Then $f$ induces a map of $\mathcal{O}_V$-modules $$\delta f : f^* \Omega_W \to \Omega_{V}.$$

\begin{defn} \label{defn_cf_fulton_style}
Let $f : X \to Y$ be a morphism of nonsingular varieties of the same dimension $r$. Let $T^*X$ and $T^*Y$ be the cotangent bundles. Then $f$ induces a homomorphism of vector bundles $\delta f : f^*(T^* Y) \to T^*X$ on $X$, and dually, a homomorphism of vector bundles $f': TX \to f^*(TY)$ on $X$. The map $f'$ is called the \emph{differential} of $f$. For each $x \in X$, we may identify the fiber $(f^*(TY))_x$ with $(TY)_{f(x)}$, thus viewing $f'_x$ as a linear map $(TX)_x \to (TY)_{f(x)}$. The \emph{critical locus} of $f$ is the set
\[C_f = \{x \in X(K) : f'_x : T_x(X) \to T_{f(x)}(Y) \text{ has rank} < r\}.\]
More generally, one may define the critical locus without the hypothesis on dimension by replacing $r$ by $\min \{\dim X, \dim Y\}$.

We now define the scheme structure on $C_f$ in the equidimensional case. By the assumptions on nonsingularity and dimension of $X$ and $Y$, the ranks of $TX$ and $TY$ are both $r$, and by definition the rank of $f^*(TY)$ and $TY$ are equal. So the domain and codomain of $f'$ are both rank $r$, allowing us to define the determinant $\det f'$. The determinant is a homomorphism of line bundles on $X$, and as such, on trivializing open sets $U$, it is given by multiplication by elements $a_U \in \mathcal{O}_X(U)$, each obtained as the determinant of any matrix in $\mathsf{M}_r(\mc{O}_X(U))$ representing $f'_U$.
The \emph{critical scheme} is the zero scheme of $\det f'$, or equivalently, the closed subscheme of $X$ determined by the sheaf of principal ideals generated by $a_U$. 
Since the determinant of a vector bundle homomorphism on $X$ vanishes precisely at the points $x \in X$ where the homomorphism has less than full rank, the underlying set of the critical scheme is indeed the critical locus $C_f$, and we henceforth use the notation $C_f$ to mean the scheme.
\end{defn}

Since $C_f$ is defined by a principal ideal sheaf, there are two mutually exclusive possibilities: either every irreducible component of $C_f$ has codimension $1$ in $X$ (i.e.\ $C_f$ is a divisor, a.k.a.\ hypersurface-with-multiplicity) or else $C_f = X$. 
These cases correspond to when $f$ is separable or inseparable, respectively. See Section \ref{sec:insep}.

There appears to be no standard definition of the critical scheme of a rational map. We use the following.

\begin{defn} \label{defn_cf_ratl}
Let $f : X \dashrightarrow Y$ be a rational map, with maximal domain of definition $U \subseteq X$. The \emph{critical scheme} of $f$ is defined as $C_f := C_{f|_U} \subseteq U$.
\end{defn}

The next Lemma describes the computation of the critical locus in the simplest possible setting. 
We use the following notation for the Jacobian matrix of a collection of rational functions: 
if $f_1, \ldots, f_r \in K(x_1, \ldots, x_r)$ 
then
\[
\frac{\partial(f_1, \ldots, f_r)}{\partial(x_1, \ldots, x_r)} :=
\begin{bmatrix}
\dfrac{\partial f_1}{\partial x_1} & \cdots & \dfrac{\partial f_1}{\partial x_r} \\ 
\vdots & \ddots & \vdots \\
\dfrac{\partial f_r}{\partial x_1} & \cdots & \dfrac{\partial f_r}{\partial x_r}
\end{bmatrix}.
\]

\begin{lem} \label{lem_computing_cf_for_opens}
Let $U$ be a nonempty open subset of $\AA^r = \Spec K[x_1, \ldots, x_r]$,
let $U'$ be a nonempty open subset of $\AA^r = \Spec K[y_1, \ldots, y_r]$, 
and let $f : U \to U'$ be a morphism given componentwise as $(f_1, \ldots, f_r)$. 
Then $C_f$ is the closed subscheme of $U$ defined by
\[
\det 
\frac{\partial(f_1, \ldots, f_r)}{\partial(x_1, \ldots, x_r)} = 0.
\]
\end{lem}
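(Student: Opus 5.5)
The plan is to unwind Definition \ref{defn_cf_fulton_style} on the open sets $U \subseteq \AA^r$ and $U' \subseteq \AA^r$, where all the relevant bundles are trivialized by the coordinate differentials. First I will identify the cotangent bundles. Since $U$ is open in $\AA^r$, we have $\Omega_U = \Omega_{\AA^r}|_U$. The sheaf $\Omega_{K[x_1,\ldots,x_r]/K}$ is free with basis $dx_1, \ldots, dx_r$, so $T^*U$ is trivial with this global frame. Likewise $T^*U'$ is trivial with frame $dy_1, \ldots, dy_r$, and hence $f^*(T^*U')$ is trivial on $U$ with frame $f^*dy_1, \ldots, f^*dy_r$. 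No further trivializing cover is needed: we may take the single trivializing open set to be $U$ itself.

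Next I will compute $\delta f : f^*\Omega_{U'} \to \Omega_U$ in these frames. On coordinate rings (or on affine opens of $U$ and $U'$), $f$ is given by $y_j \mapsto f_j$. The map on Kähler differentials sends $f^*dy_j \mapsto d(f_j) = \sum_i \frac{\partial f_j}{\partial x_i}\, dx_i$; this is the universal property of the derivation $d$ together with the chain rule for $d$ on a rational function regular on $U$, namely the quotient rule in $\Omega$. So $\delta f$ is represented by the matrix $\frac{\partial(f_1,\ldots,f_r)}{\partial(x_1,\ldots,x_r)}$, or its transpose depending on conventions. Its entries lie in $\mathcal{O}(U)$ because the $f_j$ do.

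Then I pass to the dual. The differential $f' : TU \to f^*TU'$ is the dual of $\delta f$. In the dual frames $\partial/\partial x_i$ and $f^*\partial/\partial y_j$, it is represented by the transpose of the matrix for $\delta f$, which is again the Jacobian matrix up to transpose. Transposition does not change the determinant, so $\det f'$ is multiplication by $a_U = \det \frac{\partial(f_1,\ldots,f_r)}{\partial(x_1,\ldots,x_r)}$. By the definition, $C_f$ is then the closed subscheme cut out by this element. The only mild subtlety is that the critical scheme is independent of the choice of frames: a change of frames multiplies $a_U$ by a unit of $\mathcal{O}(U)$ and so leaves the principal ideal unchanged. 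This is the step where I would be careful, though it is routine.
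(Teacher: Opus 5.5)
Your proposal is correct and follows essentially the same route as the paper: trivialize both cotangent bundles by the coordinate differentials, compute $(\delta f)(dy_j) = \sum_i \frac{\partial f_j}{\partial x_i}\, dx_i$, dualize to get the Jacobian matrix (up to transpose) for $f'$, and take its determinant. Your remark that a change of frames only multiplies $a_U$ by a unit is a harmless extra check; the paper leaves this to the coordinate-free Definition~\ref{defn_cf_fulton_style}.
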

\begin{proof}
The Zariski cotangent space of the source and target are freely generated by $dx_1, \ldots, dx_r$ and $dy_1, \ldots, dy_r$ respectively. By definition we have 
$$(\delta f)(dy_i) = \sum_{j = 1}^r \frac{\partial f_i}{\partial x_j} dx_j.$$
In this choice of basis on source and target, the homomorphism $\delta f :  f^*(T^* U') \to T^* U$ is given by the matrix
\[ \tag{*}
\begin{bmatrix}
\dfrac{\partial f_1}{\partial x_1} & \ldots & \dfrac{\partial f_r}{\partial x_1} \\
\vdots & \ddots & \vdots \\
\dfrac{\partial f_1}{\partial x_r} & \ldots & \dfrac{\partial f_r}{\partial x_r} \\
\end{bmatrix}.
\]
Equipping $TU$ and $f^*(TU')$ with the dual bases, the matrix for $f'$ is the transpose of (*), and $C_f$ is the closed subscheme defined by its determinant.
\end{proof}

Lemma \ref{lem_computing_cf_for_opens} is sufficient for computing the critical scheme structure for rational self-maps of $\PP^n$, the setting of interest in this paper. Indeed, we can write any rational self-map of $\PP^n$ locally as a map $U \to U'$ of nonempty open subsets of $\AA^n$; then the scheme structure of $C_f$ is computed locally on each affine chart of the source $\PP^n \smallsetminus I_f$. The compatibility of the local expressions for $C_f$, and their independence of the choice of affine cover, are a consequence of the coordinate-free Definition \ref{defn_cf_fulton_style}.

\subsection{Homogeneous Jacobians}

Let us make the connection between the two definitions of the critical locus precise.

\begin{defn} \label{def_Jnd}
Let $n, d \ge 0$ be non-negative integers. 
For each multi-index $\alpha = (\alpha_0, \ldots, \alpha_n) \in \NN^{n+1}$ of degree $|\alpha| := \alpha_0 + \ldots + \alpha_n = d$ let $x^\alpha := x_0^{\alpha_0} \ldots x_n^{\alpha_n}$ be the corresponding monomial.
The \emph{universal tuple} of degree $d$ in dimension $n$ is the $(n+1)$-tuple defined by 
\begin{equation} \label{universal tuple}
    F_i(x_0, \ldots, x_n) := \sum_{|\alpha|=d} u_{i,\alpha} x^\alpha
    \in \ZZ[{\bf u}][{\bf x}]
    \qquad (0 \le i \le n)
\end{equation}
where the $u_{i,\alpha}$ are formal indeterminates.
The \emph{generic homogeneous Jacobian matrix} associated to the pair $(n,d)$ is given by
\[D_{n,d} := \frac{\partial(F_0, \ldots, F_n)}{\partial(x_0, \ldots, x_n)}
=
\begin{bmatrix}
    \dfrac{\partial F_0}{\partial x_0} & \ldots & \dfrac{\partial F_0}{\partial x_n} \\
    \vdots & \ddots & \vdots \\
    \dfrac{\partial F_n}{\partial x_0} & \ldots & \dfrac{\partial F_n}{\partial x_n}
\end{bmatrix}
\in 
\mathsf{M}_{n+1}(\ZZ[\mathbf{u}][\mathbf{x}]);
\]
its determinant 
\[
J_{n,d} := \det D_{n,d} \in \ZZ[{\bf u}][{\bf x}]
\]
is called the \emph{generic homogeneous Jacobian determinant}.
\end{defn}

\begin{eg}
Let $d = 1$. 
The universal linear tuple of projective $n$-space is 
\[
(u_{0,e_0} x_0 + \ldots + u_{0,e_n} x_n, 
\ldots, 
u_{n,e_0} x_0 + \ldots + u_{n,e_n} x_n)
\]
where the $e_j$'s are the standard basis multi-indices.
The generic homogeneous Jacobian matrix is 
\[
D_{n,1} = \begin{bmatrix}
    u_{0,e_0} & \ldots & u_{0,e_n} \\ 
    \vdots & \ddots & \vdots \\
    u_{n,e_0} & \ldots & u_{n,e_n}
\end{bmatrix},
\]
so the generic homogeneous Jacobian determinant is 
\[J_{n,1} = \det {(u_{i,e_j})}.\]
\end{eg} 

\begin{eg} \label{eg_quadratic}
Let $(n,d) = (1,2)$. The universal quadratic tuple of the projective line 
is
\[
(
u_{0,(2,0)} x_0^2 + u_{0,(1,1)} x_0 x_1 + u_{0,(0,2)} x_1^2, \,
u_{1,(2,0)} x_0^2 + u_{1,(1,1)} x_0 x_1 + u_{1,(0,2)} x_1^2
).
\]
The generic homogeneous Jacobian matrix is
\[
D_{1,2} = \begin{bmatrix}
    2 u_{0,(2,0)} x_0 + u_{0,(1,1)} x_1  &  u_{0,(1,1)} x_0 + 2 u_{0,(0,2)} x_1 \\
    2 u_{1,(2,0)} x_0 + u_{1,(1,1)} x_1  &  u_{1,(1,1)} x_0 + 2 u_{1,(0,2)} x_1
\end{bmatrix},
\]
so the generic homogeneous Jacobian determinant is
\begin{align*}
    J_{1,2} 
    &= 2 (u_{0,(2,0)} u_{1,(1,1)} - u_{0,(1,1)} u_{1,(2,0)}) x_0^2 \\
    &+ 4 (u_{0,(2,0)} u_{1,(0,2)} - u_{0,(0,2)} u_{1,(2,0)}) x_0 x_1 \\
    &+ 2 (u_{0,(1,1)} u_{1,(0,2)} - u_{0,(0,2)} u_{1,(1,1)}) x_1^2 .
\end{align*}
We observe that $2$ divides $J_{1,2}$.
\end{eg}

\begin{propn} \label{homogeneity properties}
$J_{n,d}$ is multi-homogeneous:
of degree 1 in each $i$-block of ${\bf u}$'s
(hence of degree $n+1$ in ${\bf u}$)
and, if $d \ge 1$, of degree $(n+1)(d-1)$ in ${\bf x}$.
\end{propn}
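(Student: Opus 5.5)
We prove multi-homogeneity directly from the Leibniz expansion of the determinant $J_{n,d} = \det D_{n,d}$. Write
\[
J_{n,d} = \sum_{\sigma \in S_{n+1}} \sgn(\sigma) \prod_{i=0}^n \frac{\partial F_i}{\partial x_{\sigma(i)}},
\]
and analyze each factor $\partial F_i / \partial x_j$ separately.

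First, the degree in ${\bf u}$. By \eqref{universal tuple}, $F_i = \sum_{|\alpha|=d} u_{i,\alpha} x^\alpha$ is linear in the $i$-block $\{u_{i,\alpha}\}_\alpha$ and free of the other blocks. Differentiation in $x_j$ is $\ZZ[{\bf u}]$-linear, so
\[
\partial F_i/\partial x_j = \sum_\alpha \alpha_j u_{i,\alpha} x^{\alpha - e_j}
\]
is again homogeneous of degree $1$ in the $i$-block and degree $0$ in every other block (or zero). Each Leibniz term takes exactly one factor from each row $i$. Hence each term is homogeneous of degree $1$ in every $i$-block, and so is the sum. Summing over the $n+1$ blocks gives total ${\bf u}$-degree $n+1$.

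Second, the degree in ${\bf x}$ when $d \ge 1$. Every monomial $x^\alpha$ in $F_i$ has degree $d$, so each nonzero monomial $x^{\alpha-e_j}$ in $\partial F_i/\partial x_j$ has degree $d-1$. Thus every entry of $D_{n,d}$ is a form of degree $d-1$ in ${\bf x}$, or zero. A product of $n+1$ such entries is a form of degree $(n+1)(d-1)$ or zero, and therefore so is $J_{n,d}$.

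The only subtlety is the convention that $0$ counts as homogeneous of every degree, which covers vanishing entries such as $\alpha_j = 0$. Because we work over $\ZZ[{\bf u}][{\bf x}]$, no characteristic issues arise. The hypothesis $d \ge 1$ is needed only so that $d-1 \ge 0$; when $d = 0$ the matrix is identically zero. We do not expect a real obstacle here: the statement is a bookkeeping consequence of multilinearity of the determinant in its rows.
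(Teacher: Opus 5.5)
Your proof is correct and is essentially the paper's argument. The paper phrases it via scaling: multiplying $F_i$ by $\lambda$ scales row $i$ by $\lambda$, and scaling $\mathbf{x}$ by $\lambda$ scales every entry by $\lambda^{d-1}$, which is the invariant form of your term-by-term Leibniz bookkeeping; your version additionally makes explicit the convention that $0$ counts as homogeneous of every degree.
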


\begin{proof}
Scaling $F_i$ by $\lambda$ scales row $i$ of $D_{n,d}$ by $\lambda$, hence scales $J_{n,d}$ by $\lambda$ as well.
Next, since the entries are homogeneous of degree $d-1$ in $x$, scaling $x$ by $\lambda$ scales the whole matrix by $\lambda^{d-1}$, hence scales its determinant by $(\lambda^{d-1})^{n+1}$.
\end{proof}

\begin{defn}[homogeneous Jacobian of a tuple]
Let $R$ be a commutative unital ring and let $R[\mathbf{x}]_d$ denote the $R$-submodule of $R[\mathbf{x}]$ consisting of forms of degree $d$ (including 0).
For each $F = (F_0, \ldots, F_n) \in R[\mathbf{x}]_d^{n+1}$, we let $D_F$ and $J_F$ be obtained from $D_{n,d}$ and $J_{n,d}$ by interpreting $\ZZ$ in $R$ and evaluating the $u_{i,\alpha}$ at the parameters defining $F$. 
More precisely,
if $\theta_F : \ZZ[\mathbf{u}] \to R$ 
is the unique ring map defined by $u_{i,\alpha} \mapsto F_{i,\alpha}$ (the coefficient of $x^\alpha$ in $F_i$), and if we extend $\theta_F$ to the polynomial rings and thence to the matrix rings, then $D_F = \theta_F(D_{n,d})$ and $J_F = \theta_F(J_{n,d})$.
\end{defn}

Thus, each $F$ has an associated homogeneous polynomial $J_F \in R[\mathbf{x}]_{(n+1)(d-1)}$
and the assignment $F \rightsquigarrow J_F$ is itself homogeneous.

The critical loci of a map $f : \PP^n \to \PP^n$ and its lifts $F: \AA^{n+1} \to \AA^{n+1}$ are not \emph{a priori} related. While $J_F$ is precisely the polynomial defining $C_F$ in $\AA^{n+1}$ by Lemma \ref{lem_computing_cf_for_opens}, $C_f$ is a subscheme of $\PP^n$ defined only chartwise. One would hope that the projectivization $V_{\PP^n}(J_F)$ of $C_F$ would agree with $C_f$, but this is not always the case.

The following Proposition is well-known in characteristic $0$; a version of it (for birational maps of $\PP^2$) appears in \cite[\S 2.48]{Lamy}. We provide a general proof in arbitrary dimension.

\begin{propn} \label{Cf is VJF}
Let $f = [F] \in \End^n_d$. 
If $\Char K \nmid d$, then $V(J_F) = C_f$ as sets.
If $\Char K \mid d$, then $V(J_F) = \PP^n$.
\end{propn}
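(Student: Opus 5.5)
Our plan is to relate the full $(n+1)\times(n+1)$ homogeneous Jacobian $D_F$ to the $n\times n$ affine Jacobian of $f$ on a standard chart via Euler's theorem (Lemma \ref{euler}). By symmetry of the coordinates it suffices to work on the chart $U_0 = \{x_0 \neq 0\}$. For $P = [1:a_1:\ldots:a_n] \in U_0$ we have $F(P)\neq 0$ since $f$ is a morphism, so some $F_k(P) \ne 0$. Near $P$ the map $f$ is given in the target chart $\{y_k \ne 0\}$ by the functions $f_i = F_i(1,x_1,\ldots,x_n)/F_k(1,x_1,\ldots,x_n)$ for $i \ne k$. By Lemma \ref{lem_computing_cf_for_opens}, $P \in C_f$ if and only if $\det \partial(f_i)_{i\ne k}/\partial(x_1,\ldots,x_n)$ vanishes at $P$.

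The key step is a matrix manipulation in $D_F$ evaluated at the lift $\tilde P=(1,a_1,\ldots,a_n)$. First, replace column $0$ by $\sum_j x_j \cdot(\text{column } j)$. This multiplies the determinant by $x_0 = 1$, and by Euler the new column $0$ is $d\,(F_0,\ldots,F_n)^T$. Second, for each $i\neq k$, subtract $(F_i/F_k)$ times row $k$ from row $i$. This kills the entries of column $0$ except the entry $dF_k$ in row $k$. In the remaining rows and columns, the entries become $\partial_j F_i - (F_i/F_k)\partial_j F_k = F_k\,\partial_j(F_i/F_k)$. Expanding along column $0$ then gives
\[
J_F(\tilde P) = \pm\, d\, F_k(\tilde P)\cdot F_k(\tilde P)^{n}\cdot \det\frac{\partial(f_i)_{i\neq k}}{\partial(x_1,\ldots,x_n)}(P),
\]
where the derivatives of the dehomogenized functions agree with those of the homogeneous ones at $x_0=1$ for $j\ge1$. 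We would carry out this row/column bookkeeping over the local ring, or simply pointwise, since only a set-theoretic statement is needed.

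The conclusions then follow. If $\Char K\nmid d$, then $d F_k(\tilde P)^{n+1}\neq 0$, so $J_F(\tilde P)=0$ if and only if $P\in C_f$. Since $J_F$ is homogeneous, its vanishing is independent of the lift, and covering $\PP^n$ by the charts $U_0,\ldots,U_n$ gives $V(J_F)=C_f$ as sets. If $\Char K\mid d$, then the first column operation (multiplying by the unit $x_0$ on the chart) produces a column $dF \equiv 0$. Hence $J_F$ vanishes on every chart, so $J_F=0$ identically and $V(J_F)=\PP^n$. Alternatively, $x_0 J_F = \det(\text{matrix with column } dF)=0$ in the domain $K[\mathbf x]$.

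The main obstacle is the careful bookkeeping in the second step: tracking signs and the power of $F_k$, and checking that the partial derivatives of the dehomogenized quotients match the modified homogeneous entries. None of these affect vanishing, so we expect them to be harmless.
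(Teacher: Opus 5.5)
Your proof is correct, but it takes a different route from the paper's proof of this proposition. The paper's argument is coordinate-free. From $f\circ\pi = \pi\circ F$ and the chain rule it gets $\rk f'_{\pi(p)} = \rk F'_p - \dim(\im F'_p \cap \ker \pi'_{F(p)})$. Since $F'_p$ sends the Euler (scaling) vector field $w_p$ to $d\,w_{F(p)}$, which is nonzero when $\Char K \nmid d$, that intersection is the whole line $\ker\pi'_{F(p)}$, so $\rk F'_p = \rk f'_{\pi(p)}+1$. When $\Char K \mid d$, $w$ lies in $\ker F'$ everywhere, which forces $J_F = 0$.

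Your computation (Euler's column replacement, then clearing column $0$ by row operations and the quotient rule) is essentially a pointwise version of the machinery the paper develops later in Section~4. That is Lemma~\ref{euler cramer} together with Lemma~\ref{KEY LEMMA}, where Cauchy--Binet replaces your division by $F_k$; the paper uses them to prove the stronger Corollary~\ref{Cf is VJF minus If}.

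What each buys:
\begin{itemize}
\item The paper's proof is shorter and explains conceptually why the rank drops by exactly one, and why the wild case degenerates.
\item Yours produces the explicit identity $J_F(\tilde P) = \pm d\,F_k(\tilde P)^{n+1}\det\bigl(\partial f_i/\partial x_j\bigr)(P)$. Carried out in $K[\mathbf{x}][1/x_0,1/F_k]$ rather than pointwise, it already gives the scheme-theoretic equality in the tame case. If you also divide by $d$ universally before specializing, it leads directly to the flat Jacobian $J^*_F$, which handles the wild case too.
\end{itemize}
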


In fact, these equalities hold even on the level of schemes; this will follow from Theorem \ref{thm_flat_J_poly} below.

\begin{proof}
Regard $F$ as a map $\AA^{n+1} \to \AA^{n+1}$ and let $\pi : \AA^{n+1} \smallsetminus \{0\} \to \PP^n$ be the projection,
so $f \circ \pi = \pi \circ F$. 

First we examine the case $\Char K \nmid d$. This case will follow from the claim that, for all $p \in \AA^{n+1} \smallsetminus \{0\}$, we have $\rk F'_p = \rk f'_{\pi(p)} + 1$. 
Note that
\begin{align*}
    \rk f'_{\pi(p)} &= \rk {(f'_{\pi(p)} \circ \pi'_p)} & \text{(because $\pi'_p$ is surjective)}\\
    &= \rk {(\pi'_{F(p)} \circ F'_p)} & \text{(by the chain rule)} \\
    &= \rk F'_p - \dim {(\im F'_p \cap \ker \pi'_{F(p)})}.
\end{align*}
Consider the left-invariant vector field $w$ determined by the scaling action on $\AA^{n+1} \smallsetminus \{0\}$.
Since $F$ is homogeneous of degree $d$, it takes scaling-by-$\lambda$ to scaling-by-$\lambda^d$, so
\[F'_p (w_p) = \left.\frac{d}{d \lambda} F(\lambda p)  \right|_{\lambda=1} = \left.\frac{d}{d \lambda} \lambda^d F(p) \right|_{\lambda=1} = \left. d \cdot \lambda^{d-1} w_{F(p)} \right|_{\lambda=1} = d \cdot w_{F(p)} \tag{*}\]
which is nonzero by assumption on $K$. 
Since $\ker \pi'$ is generated by $w$,
we have $F'_p(w_p) \in \im F'_p \cap \ker \pi'_{F(p)}$
and so $\dim {(\im F'_{p} \cap \ker \pi'_{F(p)})} = 1$.

When $\Char K \mid d$, then (*) shows that the nonvanishing vector field $w$ is killed by $F'$, so $\det F'$ is identically $0$, implying that $V(J_F) = \PP^n$.
\end{proof}

The above proposition shows that the homogeneous Jacobian is defective in positive characteristic because a general map $f \in \End_d^n$ is separable and therefore satisfies $C_f \neq \PP^n$ (Section \ref{sec:insep}). 
As a result, in the wild case $C_f$ and $V(J_F)$ usually do not coincide, even as sets.

\section{Flat Jacobians} \label{sec:J*}
We have just seen that the homogeneous Jacobian determinant is an effective tool for computing critical loci of endomorphisms of $\PP^n$, but only in tame characteristics.
In this section, we introduce the key technical tool underlying the proofs of all our main results, the \emph{flat Jacobian} $J^*_{n,d}$. The flat Jacobian extends the useful properties of the homogeneous Jacobian to rational maps in arbitrary characteristic. The use of the word \emph{flat} is justified in Section \ref{sec:insep}, where we interpret $J^*_{n,d}$ as defining a flat family over a 
    certain 
parameter space of separable maps.

The main results of this section are collected in the following theorem. 
The reader should note that the $d = 0$ case is somewhat exceptional and that in our applications we usually take $d \geq 1$.
Also, we exclude the $n = 0$ case because the sole self-map of the one-point space $\PP^0$ does not have a well-defined algebraic degree.

\begin{thm} \label{thm_flat_J_poly}
Let $n \ge 1$ and $d \ge 0$ be integers.
There exists a unique polynomial
\[J^*_{n,d} \in \ZZ[{\bf u}][{\bf x}]\]
the \emph{generic flat Jacobian}, with the following properties:
\begin{enumerate}[label=(\alph*),ref=\thethm(\alph*)]
    \item \label{thm_flat_J_poly:a} $J_{n,d} = d J^*_{n,d}$.
    \item \label{thm_flat_J_poly:b} For any field $K$ and any rational map $f = [F] \in \Rat^n_d(K)$ defined over $K$, 
    \[C_f = V(J^*_F) \smallsetminus I_f\]
    as schemes, where $J^*_F := \theta_F(J^*_{n,d})$ is the specialization of $J^*_{n,d}$ at $F$ via the ring homomorphism
    \begin{align*}
        \theta_F : \ZZ[{\bf u}][{\bf x}] &\to K[\bf x] \\
        u_{i,\alpha} &\mapsto F_{i,\alpha}.
    \end{align*} 
\end{enumerate}
\end{thm}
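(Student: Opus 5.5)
The plan is to construct $J^*_{n,d}$ explicitly by a column operation on $D_{n,d}$ based on Euler's theorem (Lemma \ref{euler}), and then to verify (b) chart by chart using Lemma \ref{lem_computing_cf_for_opens}.

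\emph{Construction and (a).} In $D_{n,d}$, multiply column $a$ by $x_a$ and add $x_j$ times column $j$ for every $j\neq a$. By Euler, column $a$ becomes $d\cdot(F_0,\ldots,F_n)^T$. Hence
\[x_a J_{n,d} = d\, M_a, \qquad M_a := \det\bigl[\partial_0F \,|\, \cdots \,|\, F \,|\, \cdots \,|\, \partial_nF\bigr],\]
where $F$ replaces the $a$-th column. Suppose $d\ge 1$. The ring $\ZZ[\mathbf u][\mathbf x]$ is a UFD, $x_a$ is prime, and $x_a\nmid d$, so $x_a\mid M_a$. Define $J^*_{n,d} := M_0/x_0$. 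Then $x_0J_{n,d}=d x_0 J^*_{n,d}$, and cancelling $x_0$ in the domain gives $J_{n,d}=dJ^*_{n,d}$. Cancelling $d$ the same way gives $M_a = x_aJ^*_{n,d}$ for every $a$.

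If $d=0$, every $\partial_jF_i=0$, so $M_0=0$ for $n\ge1$; set $J^*_{n,0}=0$. In this case every $f\in\Rat^n_0$ is constant, so $C_f=\PP^n$, which is consistent with (b).

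\emph{Uniqueness.} For $d\ge1$, uniqueness follows from (a) since $\ZZ[\mathbf u][\mathbf x]$ is torsion-free. For $d=0$ it follows from (b). Take $K=\overline{\QQ(\mathbf u)}$ and $F$ the universal tuple, so that $\theta_F$ is injective. Then (b) forces $V(J^*_F)=\PP^n$, hence $J^*_F=0$, hence $J^*=0$.

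\emph{Proof of (b).} The identities $M_a=x_aJ^*_{n,d}$ specialize under $\theta_F$, so $M_a(F)=x_aJ^*_F$ in $K[\mathbf x]$. The set $\PP^n\smallsetminus I_f$ is covered by the opens $U_{a,b}=D(x_a)\cap D(F_b)$, and on $U_{a,b}$ the map $f$ lands in the chart $\{y_b\neq0\}$. Permuting source coordinates and target components changes every determinant below only by a sign, so it suffices to treat $a=b=0$. There Lemma \ref{lem_computing_cf_for_opens} says $C_f$ is cut out by
\[\det\frac{\partial(F_1/F_0,\ldots,F_n/F_0)}{\partial(x_1,\ldots,x_n)}\Big|_{x_0=1}.\]
The entries are $(F_0\partial_jF_i-F_i\partial_jF_0)/F_0^2$. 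The key algebraic step, and the main thing to check carefully, is the bordered-determinant identity
\[\det\Bigl(\frac{F_0\partial_jF_i-F_i\partial_jF_0}{F_0^2}\Bigr)_{1\le i,j\le n} = F_0^{-(n+1)}\det\bigl[F\,|\,\partial_1F\,|\cdots|\,\partial_nF\bigr].\]
To prove it, subtract $(\partial_jF_0/F_0)$ times column $F$ from column $\partial_jF$ in the $(n+1)\times(n+1)$ matrix. This clears row $0$ except its first entry $F_0$. Expanding along row $0$ then gives $F_0\cdot\det(\partial_jF_i-F_i\partial_jF_0/F_0)$, and pulling $F_0^{-1}$ out of each of the $n$ rows gives the claim.

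So the local equation is $M_0(F)/F_0^{n+1}$ restricted to $x_0=1$, which is $x_0J^*_F/F_0^{n+1}$ up to the units $x_0$ and $F_0$ on $U_{0,0}$. This generates the same ideal as the dehomogenization of $J^*_F$. Therefore $C_f\cap U_{a,b}=V(J^*_F)\cap U_{a,b}$ as schemes. These equalities glue, because both sides are closed subschemes defined compatibly on the cover (Definition \ref{defn_cf_fulton_style} for $C_f$, a global form for $V(J^*_F)$). This yields $C_f=V(J^*_F)\smallsetminus I_f$.
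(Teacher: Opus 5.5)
Your proposal is correct and follows essentially the same route as the paper. It uses the Euler/Cramer column operation giving $x_a J_{n,d} = d\det D^{(a)}_{n,d}$ and hence $\det D^{(a)}_{n,d} = x_a J^*_{n,d}$, uniqueness via torsion-freeness for $d\ge 1$ and the constant-map argument for $d=0$, and a chartwise comparison on $D(x_a)\cap D(F_b)$ via the identity relating $F_b^{n+1}$ times the chart Jacobian determinant to $\pm\det D^{(a)}_F$. The only real difference is in how that bordered-determinant identity is proved: you clear row $0$ by column operations over the fraction field and reduce general $(a,b)$ to $(0,0)$ by relabeling, whereas the paper uses Cauchy--Binet with the auxiliary matrix $\Pi$ and tracks the sign $(-1)^{i+j}$ directly. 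Both arguments are valid.
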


For $f \in \End^n_d(K)$, 
Theorem \ref{thm_flat_J_poly:b} simplifies to Theorem \ref{thm_intro_Jstar} from the Introduction 
because $I_f = \varnothing$ for endomorphisms.

\begin{lem} \label{flat_J_unique}
For each pair $(n, d) \ne (0, 0)$
there exists at most one polynomial satisfying both (a) and (b) in Theorem \ref{thm_flat_J_poly}.
\end{lem}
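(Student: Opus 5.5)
The plan is to split according to whether $d \neq 0$ or $d = 0$.

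\emph{Case $d \ge 1$.} Here property (a) alone determines the polynomial. Suppose $P, Q \in \ZZ[{\bf u}][{\bf x}]$ both satisfy $dP = J_{n,d} = dQ$. Then $d(P - Q) = 0$ in $\ZZ[{\bf u}][{\bf x}]$. This ring is a torsion-free $\ZZ$-module, since it is a free $\ZZ$-module on the monomials, and $d \neq 0$. Hence $P = Q$.

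\emph{Case $d = 0$.} Now (a) is vacuous, because it reads $J_{n,0} = 0$. This holds anyway: all entries of $D_{n,0}$ are derivatives of constants, so they vanish. We must therefore use (b). Since $(n,d) \neq (0,0)$, we have $n \ge 1$. For $d = 0$ the universal tuple consists of constants $F_i = u_{i,0}$. So $J^*_{n,0}$, being a polynomial in ${\bf u}$ and ${\bf x}$, is a priori arbitrary. The point to settle first is what $\Rat^n_0$ is. A tuple of nonzero-not-all constants has gcd $1$, so $\Rat^n_0 = \Bar\End^n_0 \cong \PP^n$, the constant maps. Each constant map $f$ to a point $q$ is a morphism, so $I_f = \varnothing$. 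But $f$ is not between varieties of the same dimension in a way that yields a finite map. Its differential is zero, and since $n \ge 1$, $\det f' = 0$ on every chart. By Lemma \ref{lem_computing_cf_for_opens}, $C_f = \PP^n$ as a scheme. So (b) forces $V(J^*_F) = \PP^n$, i.e.\ $J^*_F = \theta_F(J^*_{n,0}) = 0$ in $K[{\bf x}]$, for every field $K$ and every nonzero constant tuple $F \in K^{n+1}$.

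It remains to deduce that $J^*_{n,0} = 0$ in $\ZZ[{\bf u}][{\bf x}]$. Write $P = J^*_{n,0}$ and view its ${\bf x}$-coefficients as polynomials $c_\beta({\bf u}) \in \ZZ[u_{0,0}, \ldots, u_{n,0}]$. The vanishing above says that each $c_\beta$ vanishes at every point of $K^{n+1} \smallsetminus \{0\}$, for every field $K$. Take $K = \Bar\QQ$, which is infinite. A polynomial over a field of characteristic $0$ vanishing on the Zariski-dense set $K^{n+1}\smallsetminus\{0\}$ is zero. Since $\ZZ \hookrightarrow \Bar\QQ$, each $c_\beta = 0$ in $\ZZ[{\bf u}]$, so $P = 0$. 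Therefore any two candidates both equal $0$ and coincide.

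The only delicate step is the $d=0$ case. Care is needed in reading Definition \ref{defn_cf_fulton_style} for a constant map, which is a map between equidimensional smooth varieties where the differential is literally the zero matrix in local coordinates. We also need $n \ge 1$, so that the determinant of an $n\times n$ zero matrix is $0$ rather than $1$; this is exactly why $(n,d)=(0,0)$ is excluded. The rest is the torsion-freeness and density argument above.
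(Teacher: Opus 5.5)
Your proof is correct and follows the paper's: cancel $d$ when $d \ge 1$, and for $d = 0$ apply (b) to constant maps, whose critical scheme is all of $\PP^n$ because $n \ge 1$, to force $J^*_{n,0} = 0$. The only difference is the last step: the paper applies (b) once to the universal constant tuple over $K = \QQ(\mathbf{u})$, where $\theta_F$ is just the inclusion. You instead apply it to every nonzero constant tuple over $\Bar\QQ$ and conclude by Zariski density, which takes one extra line but only uses (b) over algebraically closed fields.
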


\begin{proof}
If $d > 0$ and a polynomial satisfying (a) exists, then it is unique because $d$ is cancellable in $\ZZ$.
On the other hand, if $d = 0$ then condition (b) forces $J^*_{n,0} = 0$.
To see this, pick $K := \QQ(\mathbf{u})$ and let $F \in K[\mathbf{x}]_0^{n+1} = K^{n+1}$
be the universal tuple of degree $0$. Then $f := [F] \in \Rat^n_0(K)$ is a constant morphism,
so $C_f = \PP^n$ (because $n \ne 0$) and $I_f = \varnothing$.
By (b), $V(J^*_F) = \PP^n$, 
which means $J^*_F = 0$.
But $J^*_F = \theta_F(J^*_{n,0})$ and, by our choice of $K$, 
$\theta_F$ is just the inclusion map $\ZZ[{\bf u}][{\bf x}] \hookrightarrow \QQ({\bf u})[{\bf x}]$.
Thus $\theta_F(J^*_{n,0}) = J^*_{n,0}$ vanishes.
\end{proof} 

In light of Lemma \ref{flat_J_unique}, we make the following definition.

\begin{defn}[generic flat Jacobian]
For $n, d \ge 0$ define
\[
J^*_{n,d} 
:= 
\begin{dcases*} 0 & if $d = 0$ \\
    \frac{J_{n,d}}{d} & if $d \ge 1$ 
\end{dcases*}
\]
\end{defn}
as an element of $\QQ[{\bf u}][{\bf x}]$.
Note that $J^*_{n,d}$ inherits all the homogeneity properties of $J_{n,d}$ (cf.\ Proposition \ref{homogeneity properties}); 
in particular, $J^*_{n,d}$ is homogeneous of degree $n+1$ in $\bf u$ and of degree $(n+1)(d-1)$ in $\bf x$.

It is clear that with this definition,
$J^*_{n,0}$ satisfies
all the criteria of Theorem \ref{thm_flat_J_poly}.
The rest of this section is devoted to showing, when $d \ge 1$:
\begin{itemize}
\item that $J^*_{n,d}$ has \emph{integer} coefficients (Corollary \ref{d divides J});
\item that, when $F$ is in lowest terms, $\theta_F(J^*_{n,d})$ cuts out the critical scheme of the rational map defined by $F$ (Corollary \ref{Cf is VJF minus If}).
\end{itemize}

\subsection{Integrality}

For the remainder of Section \ref{sec:J*}, we work over an arbitrary (commutative, unital) ring $R$.

\begin{lem} \label{euler cramer}
Let $n, d \ge 0$ and let $F \in R[{\bf x}]^{n+1}_d$ be 
forms of the same degree.
For each $i = 0, \ldots, n$ put
\[
D^{(i)}_F
:= 
\begin{bmatrix}
    \partial_0 F_0 & \cdots & \partial_{i-1} F_0 & F_0 & 
    \partial_{i+1} F_0 & \cdots & \partial_n F_0 \\
    \vdots & \raisebox{0.44ex}{$\cdots$} & \vdots & \vdots & \vdots & \raisebox{0.44ex}{$\cdots$} & \vdots \\[.5ex]
    \partial_0 F_n & \cdots & \partial_{i-1} F_n & F_n & \partial_{i+1} F_n & \cdots & \partial_n F_n
\end{bmatrix},
\]
i.e.\ the Jacobian matrix of $F$ but with the $\partial_i$-column replaced by $F$ itself.
Then 
\[d \det D^{(i)}_F = x_i J_F.\]
\end{lem}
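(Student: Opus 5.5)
The identity follows from Euler's theorem (Lemma \ref{euler}) together with multilinearity of the determinant in its columns. Each $F_j$ is homogeneous of degree $d$, so Lemma \ref{euler} gives
\[
d F_j = \sum_{k=0}^n x_k\, \partial_k F_j \qquad (0 \le j \le n).
\]
Hence $d$ times the column $(F_0, \ldots, F_n)^T$ equals $\sum_k x_k$ times the $\partial_k$-column of the Jacobian matrix $D_F$. This identity holds in $R[\mathbf{x}]$ for any commutative unital ring $R$, since the proof of Lemma \ref{euler} is purely formal. The case $d = 0$ is also covered: there both sides of Euler's identity vanish because all partial derivatives of constants are zero.

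Now compute $d \det D^{(i)}_F$. By linearity of the determinant in the $i$-th column, multiplying that column by $d$ multiplies the determinant by $d$. So $d \det D^{(i)}_F$ is the determinant of the matrix obtained from $D_F$ by replacing its $i$-th column with $\sum_k x_k \cdot (\text{$k$-th column of } D_F)$. Expanding by multilinearity in the $i$-th column gives
\[
d \det D^{(i)}_F = \sum_{k=0}^n x_k \det M_k,
\]
where $M_k$ is $D_F$ with its $i$-th column replaced by its $k$-th column. For $k \neq i$, the matrix $M_k$ has two equal columns, so $\det M_k = 0$; this holds over any commutative ring because the determinant is alternating. For $k = i$ we have $M_i = D_F$, whose determinant is $J_F$. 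Therefore $d \det D^{(i)}_F = x_i J_F$.

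There is no real obstacle here. The only point that needs care is that we work over an arbitrary ring $R$, possibly with torsion, so we must not divide by $d$ anywhere; the argument above never does. It is essentially Cramer's rule applied to the linear relation given by Euler's theorem.
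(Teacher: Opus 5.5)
Your proof is correct and matches the paper's argument: the paper also writes Euler's theorem as $D_F x = dF$, applies Cramer's rule in its division-free form over a commutative ring, and then pulls out the factor $d$ by linearity in the $i$-th column. Your expansion by multilinearity, with the equal-column terms vanishing, is just an inline proof of that form of Cramer's rule.
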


\begin{proof}
Let $D$ be the Jacobian matrix of $F$.
Viewing $F = (F_0, \ldots, F_n)$ and $x = (x_0, \ldots, x_n)$ as column vectors, 
Euler's theorem \ref{euler} implies
\[Dx = dF.\]
By Cramer's rule (valid over any commutative ring in the form $Av = w \implies (\det A)v_i = \det \Tilde{A}_{i|w}$ where $\Tilde{A}_{i|w}$ is the matrix $A$ with column $i$ replaced by $w$) 
we have 
\[
(\det D) x_i = \det \begin{bmatrix} \ \cdots & \partial_{i-1} F & dF & \partial_{i+1} F & \cdots \ \end{bmatrix}.
\]
But $\det D = J_F$ by definition, while the r.h.s.\ is $d \det D^{(i)}_F$ by linearity.
\end{proof}

\begin{cor}
Let $n, d \ge 0$.
\begin{enumerate}[label=(\alph*),ref=\thethm(\alph*)]
    \item \label{d divides J} 
    $d$ divides $J_{n,d}$ in $\ZZ[{\bf u}][{\bf x}]$.
    \item \label{Jstar formula} 
    If $(n, d) \ne (0, 0)$ 
    then the flat Jacobian of any $F \in R[{\bf x}]_d^{n+1}$ 
    may be calculated directly over $R$ 
    by any one of the expressions
    \[J^*_F = \frac{\det D^{(i)}_F}{x_i} \in R[{\bf x}] \qquad (i = 0, \ldots, n).\]
\end{enumerate}
\end{cor}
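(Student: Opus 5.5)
Both parts come from Lemma \ref{euler cramer} applied to the universal tuple over $R = \ZZ[\mathbf{u}]$, together with the fact that $\ZZ[\mathbf{u}][\mathbf{x}]$ is a UFD.

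For (a), take $F$ to be the universal tuple of Definition \ref{def_Jnd}, so $J_F = J_{n,d}$. For $i=0$, Lemma \ref{euler cramer} gives
\[d \det D^{(0)}_{n,d} = x_0 J_{n,d}\]
in $\ZZ[\mathbf{u}][\mathbf{x}]$. The cases $d=0$ and $d=1$ are trivial, so assume $d \ge 2$. Let $q$ be a prime dividing $d$; it is a prime element of the UFD $\ZZ[\mathbf{u}][\mathbf{x}]$. Since $q$ does not divide $x_0$, it must divide $J_{n,d}$ to the full power to which it divides $d$. Doing this for every prime power factor of $d$ gives $d \mid J_{n,d}$. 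Alternatively, one can reduce modulo $q^k$, but the UFD argument is cleaner: $d \det D^{(0)}_{n,d} = x_0 J_{n,d}$ shows $x_0 \mid \det D^{(0)}_{n,d}$, since $x_0$ is prime and coprime to $d$. Writing $\det D^{(0)}_{n,d} = x_0 P$, we get $dP = J_{n,d}$, so $J^*_{n,d} = P$ has integer coefficients. The same argument works for every $i$, which shows
\[J^*_{n,d} = \det D^{(i)}_{n,d} / x_i\]
generically. The $n=0$ case with $d \ge 1$ is covered by this same computation.

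For (b), first treat $d \ge 1$. The identity $x_i J^*_{n,d} = \det D^{(i)}_{n,d}$ holds in $\ZZ[\mathbf{u}][\mathbf{x}]$, which is torsion-free, because multiplying both sides by $d$ gives the true identity of Lemma \ref{euler cramer}. Apply the ring map $\theta_F$. The construction of $D^{(i)}$ (taking partial derivatives and inserting the column $F$) commutes with specialization of coefficients, so we get
\[x_i J^*_F = \det D^{(i)}_F\]
in $R[\mathbf{x}]$. Since $x_i$ is a nonzerodivisor in $R[\mathbf{x}]$ for any commutative ring $R$, the quotient $\det D^{(i)}_F / x_i$ is well defined and equals $J^*_F$. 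For $d=0$ (so $n \ge 1$), every $\partial_j F_k$ vanishes. The matrix $D^{(i)}_F$ then has at least one zero column $j \ne i$, so its determinant is $0 = x_i J^*_F$ and the formula still holds.

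The only delicate step is the divisibility in (a): we must cancel $x_i$ rather than $d$, because $d$ is a zero divisor in positive characteristic. This is why (a) is proved once over $\ZZ$, and only afterwards is the exact identity specialized.
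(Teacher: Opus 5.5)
Your proposal is correct and is essentially the paper's proof. You apply Lemma~\ref{euler cramer} to the universal tuple, deduce integrality of $J_{n,d}/d$ in $\ZZ[\mathbf{u}][\mathbf{x}]$, specialize along $\theta_F$, divide by the nonzerodivisor $x_i$, and handle $d = 0$, $n \ge 1$ via a zero column $j \ne i$, exactly as the paper does (your cancellation of the prime $x_0$ is an equivalent substitute for the paper's Euclid's-lemma step with $d$ and $x_0$ coprime).
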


\begin{proof}
Applying Lemma \ref{euler cramer} to the universal tuple shows
\[d \det D^{(i)}_{n,d} = x_i J_{n,d} \tag{*}\]
for all $i$.
In particular, $d$ divides $x_0 J_{n,d}$ in $\ZZ[{\bf u}][{\bf x}]$.
If $d \ne 0$ then $d$ and $x_0$ are coprime, so
Euclid's lemma (valid in any GCD domain) implies $d$ divides $J_{n,d}$.
Together with the trivial case $J_{n,0} = 0$,
this proves part (a).
Next, we claim 
\[
\det D^{(i)}_{n,d} = x_i J^*_{n,d}. \tag{**}
\]
For $d \ne 0$ this is immediate from (*),
while if $d = 0$
then $n \ge 1$
and a direct calculation 
shows that 
\[
\det D^{(i)}_{n,0} 
= \det \begin{bmatrix} 
    \ \cdots & 0 & u_{0,{\bf 0}} & 0 & \cdots \ \\
    & \vdots & \vdots & \vdots \\
    \ \cdots & 0 & u_{n,{\bf 0}} & 0 & \cdots \ 
\end{bmatrix} 
= 0 
= x_i J^*_{n,0}.
\]
With (**) in hand, let $\theta_F : \ZZ[{\bf u}][{\bf x}] \to R[{\bf x}]$ be the specialization map. 
Then
\[
x_i J^*_F 
= x_i \theta_F(J^*_{n,d})
= \theta_F(x_i J^*_{n,d}) 
= \theta_F(\det D^{(i)}_{n,d})
= \det \theta_F(D^{(i)}_{n,d})
= \det D_F^{(i)}.
\]
Thus $x_i$ divides $\det D_F^{(i)}$ in $R[{\bf x}]$.
Since indeterminates in polynomial rings are never zero-divisors, the quotient is well-defined. Part (b) follows.
\end{proof}

\begin{rmk} \label{rmk as schemes}
Corollary \ref{d divides J} provides an alternate proof of 
    the second claim of 
Proposition \ref{Cf is VJF}. 
Corollary \ref{Jstar formula} does not hold if $(n, d) = (0,0)$ because $\det D^{(0)}_{0,0} = u_{0,{\bf 0}}$ is neither zero nor divisible by $x_0$.
Also, $\det D^{(i)}_F$ need not be divisible by $x_i$ if $F$ has unequal degree; 
for instance,
\[
\det \begin{bmatrix} x_0 + x_1 & 1 \\ x_0^2 + x_1^2 & 2x_1 \end{bmatrix} = 2x_0 x_1 + x_1^2 - x_0^2.
\]
\end{rmk}

\subsection{Criticality}

Next, we show that the flat Jacobian determines the critical scheme of any rational map. 
The key to the proof is the following determinant identity. 
We note that a related formula, with an alternate proof, appears in the 1882 exercise-book of Muir \cite[\S 192]{Muir}.

\begin{lem} \label{KEY LEMMA}
Let $F \in R[{\bf x}]^{n+1}$ be forms (not necessarily of the same degree). Then
\[F_j^{n+1} 
\det \frac{\partial(F_0/F_j, \ldots, F_{j-1}/F_j, F_{j+1}/F_j, \ldots, F_n/F_j)} {\partial(x_0, \ldots, x_{i-1}, x_{i+1}, \ldots, x_n)} = (-1)^{i+j} \det D^{(i)}_F 
\]
for all $0 \le i, j \le n$ such that $F_j$ is not a zero-divisor.
\end{lem}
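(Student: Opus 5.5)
Fix $i$ and $j$. We work in the total ring of fractions of $R[{\bf x}]$. Since $F_j$ is a non-zero-divisor, quotients by $F_j$ make sense, and multiplying by $F_j$ is injective. The plan is to row-reduce $D^{(i)}_F$ until the column containing $F$ has a single nonzero entry, and then expand along that column.

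The quotient rule gives
\[
\partial_k(F_m/F_j) = \frac{F_j\,\partial_k F_m - F_m\,\partial_k F_j}{F_j^2}.
\]
In $D^{(i)}_F$, column $i$ is $(F_0,\dots,F_n)^T$ and every other column $k$ is $(\partial_k F_0,\dots,\partial_k F_n)^T$. For each $m\ne j$, multiply row $m$ by $F_j$ and subtract $F_m$ times row $j$. This rescales the determinant by $F_j^{n}$. After the operation, row $m$ has entry $F_jF_m - F_mF_j = 0$ in column $i$, and entry $F_j\partial_kF_m - F_m\partial_kF_j = F_j^2\,\partial_k(F_m/F_j)$ in column $k\ne i$. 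Row $j$ is unchanged: it has $F_j$ in column $i$ and $\partial_k F_j$ elsewhere. Hence
\[
F_j^{n}\det D^{(i)}_F = \det(\text{new matrix}).
\]

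Expand the new matrix along column $i$. The only nonzero entry there is $F_j$, in position $(j,i)$, so the determinant is $(-1)^{i+j}F_j$ times the minor obtained by deleting row $j$ and column $i$. That minor is an $n\times n$ matrix whose entries are $F_j^2\,\partial_k(F_m/F_j)$ with $m\ne j$ and $k\ne i$, so it equals $F_j^{2n}\det\frac{\partial(F_m/F_j)_{m\ne j}}{\partial(x_k)_{k\ne i}}$. Combining,
\[
F_j^{n}\det D^{(i)}_F=(-1)^{i+j}F_j^{2n+1}\det\frac{\partial(\cdots)}{\partial(\cdots)}.
\]
Cancelling $F_j^n$ (legitimate because $F_j$ is a non-zero-divisor) gives the identity in the statement. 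The rows and columns keep their natural order after deletion, so the sign $(-1)^{i+j}$ is exactly the cofactor sign.

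Note that Euler's theorem is never used, which is consistent with the hypothesis that the forms need not have equal degrees. The main thing to get right is the bookkeeping: the row operation contributes the power $F_j^n$, the quotient rule contributes $F_j^{2n}$, and the expansion contributes one more factor $F_j$, giving net exponent $n+1$. One must also justify the cancellation of $F_j^n$. For that we can either work in the localization $R[{\bf x}][F_j^{-1}]$, into which $R[{\bf x}]$ injects since $F_j$ is a non-zero-divisor, or clear denominators first so that all identities live in $R[{\bf x}]$.
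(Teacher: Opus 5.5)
Your proof is correct. It rests on the same combinations as the paper's, but it assembles them differently.

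Your row operation replaces row $m$ by $F_j e_m - F_m e_j$ (acting on the rows). These vectors are exactly the rows of the paper's matrix $\Pi$ when $j=0$.

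The paper works with the non-square product $\Pi \Phi$, where $\Phi = D^{(0)}_F$:
\begin{itemize}
\item it evaluates the $\{1,\ldots,n\}\times\{1,\ldots,n\}$ minor of $\Pi\Phi$ once via the quotient rule and once via Cauchy--Binet;
\item it recognizes the Cauchy--Binet sum as a cofactor expansion of $\det\Phi$ down its first column and cancels $F_0^{n-1}$;
\item it then obtains general $i$ by changing the column index set, and general $j$ by a cyclic row permutation contributing $(-1)^j$.
\end{itemize}

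You effectively append the row $e_j$ to $\Pi$, giving a square matrix $E$ with $\det E = F_j^n$. Ordinary multiplicativity $\det(E D^{(i)}_F) = F_j^n \det D^{(i)}_F$ then replaces Cauchy--Binet. Since column $i$ of $E D^{(i)}_F$ is $F_j e_j$, one cofactor expansion finishes the argument. The sign $(-1)^{i+j}$ comes out directly as a cofactor sign, uniformly in $(i,j)$, so no permutation step is needed.

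Your version is shorter and more elementary. The paper's version relates the minor of $\Pi\Phi$ to $\det\Phi$ through an explicit Cauchy--Binet sum, which is closer to the classical form of the identity it cites from Muir.

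In both arguments, the non-zero-divisor hypothesis on $F_j$ is used only to make sense of $F_m/F_j$ and to cancel a power of $F_j$: you cancel $F_j^n$, the paper cancels $F_0^{n-1}$. Neither argument uses homogeneity or Euler's theorem.
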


\begin{proof}
We begin with the case $i = j = 0$.
Define the $n$-by-$(n+1)$ matrix
\[
\Pi := \begin{bmatrix} 
    -F_1   & F_0 \\
    \vdots &     & \ddots \\
    -F_n   &     &        & F_0
\end{bmatrix}
\]
and the $(n+1)$-by-$(n+1)$ matrix
\[
\Phi := D^{(0)}_F = \begin{bmatrix} 
    F_0    & \partial_1 F_0 & \cdots & \partial_n F_0 \\
    \vdots & \vdots         & \ddots &  \vdots        \\
    F_n    & \partial_1 F_n & \cdots & \partial_n F_n 
\end{bmatrix} 
\]
and multiply: 
\[
\Pi \Phi = \begin{bmatrix}
    0 & F_0 \partial_1 F_1 - F_1 \partial_1 F_0 & \cdots & F_0 \partial_n F_1 - F_1 \partial_n F_0 \\
    \vdots & \vdots & \ddots & \vdots \\
    0 & F_0 \partial_1 F_n - F_n \partial_1 F_0 & \ldots & F_0 \partial_n F_n - F_n \partial_n F_0
\end{bmatrix}.
\]
By the quotient rule, 
\[F_0 \partial_\ell F_k - F_k \partial_\ell F_0 = F_0^2 \cdot \partial_\ell (F_k/F_0)\]
for all $1 \le k, \ell \le n$. 
Thus the $(\mc{I},\mc{J})$-submatrix of $\Pi \Phi$, where $\mc{I} := \mc{J} := \{1, \ldots, n\}$,
has determinant equal to 
\[
\det {(\Pi\Phi)_{\mc{I},\mc{J}}} = 
F_0^{2n} \det \frac{\partial(F_1/F_0, \ldots, F_n/F_0)}{\partial(x_1, \ldots, x_n)}.
\]
On the other hand, the Cauchy--Binet formula says
\[
\det {(\Pi\Phi)_{\mc{I},\mc{J}}} = \sum_{\mc{K}} \det \Pi_{\mc{I},\mc{K}} \det \Phi_{\mc{K},\mc{J}}
\]
where $\mc{K}$ ranges over all size-$n$ subsets of $\{0, \ldots, n\}$.
Each such subset is the complement of a single number $k$.
It is easy to see that 
\[\det \Pi_{\mc{I}, \{k\}^c} = (-1)^k F_k F_0^{n-1}\]
for all $0 \le k \le n$: indeed, this is immediate for $k = 0$, and for $k \ge 1$ just expand along row $k$, whose sole nonzero entry is $-F_k$.
Meanwhile, because $\mc{J} = \{0\}^c$, 
\[\det \Phi_{\{k\}^c,\mc{J}} = \det \frac{\partial (F_0, \ldots, F_{k-1}, F_{k+1}, \ldots, F_n)}{\partial(x_1, \ldots, x_n)}.\]
It follows that 
\begin{align*}
    \det {(\Pi \Phi)_{\mc{I},\mc{J}}}
    &= F_0^{n-1} \sum_{k=0}^n (-1)^k F_k \det \frac{\partial (F_0, \ldots, F_{k-1}, F_{k+1}, \ldots, F_n)}{\partial(x_1, \ldots, x_n)} 
    \\
    &= F_0^{n-1} \det \Phi
\end{align*}
by recognizing the sum as a cofactor expansion, down the leftmost column of $\Phi$. 
The result (for $i = j = 0$) follows on cancelling $F_0^{n-1}$.

For $i > 0 = j$ we simply replace $D^{(0)}_F$ by $D^{(i)}_F$ in the definition of $\Phi$ and repeat the above calculation, using $\mc{J} := \{i\}^c$; this introduces a factor of $(-1)^i$.
Finally for $j > 0$ we observe that, by what we just proved, the l.h.s.\ is equal to 
\[(-1)^i \det D^{(i)}_{(F_j, F_0, \ldots, F_{j-1}, F_{j+1}, \ldots, F_n)}\]
where the subscript is a certain reordering $(F_{\tau(0)}, \ldots, F_{\tau(n)})$ of the original forms.
This merely permutes the rows of $\Phi$ through a cycle of length $j+1$,
so by properties of determinants we acquire a factor of $\sgn \tau = (-1)^j$. 
\end{proof}

The following Corollary completes the proof of Theorem \ref{thm_flat_J_poly}.

\begin{cor} \label{Cf is VJF minus If}
Let $n \ge 1$.
For all rational maps $f : \PP^n \dashrightarrow \PP^n$,
\[C_f = V(J^*_F) \smallsetminus I_f\]
as schemes, where $F$ is any lift of $f$. 
\end{cor}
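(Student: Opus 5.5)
The plan is to check the equality of schemes locally on a cover of $\PP^n \smallsetminus I_f$ by open sets on which both the source and the target are affine. Fix a lift $F$ of $f$. For indices $0 \le i, j \le n$, let
\[
U_{ij} := \{x_i \ne 0\} \cap \{F_j \ne 0\} \subseteq \PP^n.
\]
Since $I_f = V(F_0, \ldots, F_n)$, the sets $U_{ij}$ cover $\PP^n \smallsetminus I_f$. Moreover $f$ maps $U_{ij}$ into the standard chart $\{y_j \ne 0\} \cong \AA^n$. Two things are known about $C_f$:
\begin{itemize}
\item because $C_f$ is defined by a coordinate-free principal ideal sheaf (Definition \ref{defn_cf_fulton_style}, Definition \ref{defn_cf_ratl}), it may be computed chart by chart, and its restriction to $U_{ij}$ is the critical scheme of $f|_{U_{ij}}$;
\item by Lemma \ref{lem_computing_cf_for_opens}, applied with the affine coordinates $x_k/x_i$ ($k \ne i$) on the source and $y_\ell/y_j$ ($\ell \ne j$) on the target, this critical scheme is cut out by the $n \times n$ Jacobian determinant of the functions $F_\ell/F_j$, viewed as functions of the variables $x_k/x_i$.
\end{itemize}

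The key step is to identify this local determinant with $J^*_F$ up to a unit on $U_{ij}$. First dehomogenize by setting $x_i = 1$. Differentiating with respect to $x_k/x_i$ on the chart is then the same as taking $\partial_k$ of the dehomogenized function. The functions $F_\ell/F_j$ are homogeneous of degree $0$, so they equal their dehomogenizations. Lemma \ref{KEY LEMMA} holds over any ring and for forms of unequal degree, so I would apply it directly to the dehomogenized tuple $F(x)|_{x_i=1}$ in $K[x_0,\ldots,\hat{x}_i,\ldots,x_n]$, inverting $F_j$ so that it is not a zero-divisor (or working in the localization). This gives
\[
F_j^{n+1}\, \det\frac{\partial(F_\ell/F_j)_{\ell\ne j}}{\partial(x_k)_{k\ne i}} = \pm \det D^{(i)}_F \big|_{x_i=1}.
\]
Strictly speaking, Lemma \ref{KEY LEMMA} is stated for forms in all $n+1$ variables. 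The cleanest route is therefore to apply it to the homogeneous $F$ and only afterwards set $x_i = 1$. The matrix $D^{(i)}_F$ involves no $\partial_i$ column, so the dehomogenization commutes with everything that appears. Finally, Corollary \ref{Jstar formula} gives $\det D^{(i)}_F = x_i J^*_F$, and $x_i = 1$ on the chart. Hence the local generator of $C_f$ on $U_{ij}$ equals $\pm F_j^{-(n+1)} J^*_F|_{x_i=1}$.

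Since $F_j$ is a unit on $U_{ij}$, the ideal it generates coincides with the ideal of $V(J^*_F)$ restricted to $U_{ij}$. Gluing over all $(i,j)$ gives $C_f = V(J^*_F) \smallsetminus I_f$ as closed subschemes of $\PP^n \smallsetminus I_f$. Independence of the choice of lift is automatic: lifts of $f$ of degree $d$ differ only by a nonzero scalar, which scales $J^*_F$ by a unit.

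I expect the main obstacle to be the bookkeeping that justifies treating the affine-chart Jacobian as the homogeneous formula in Lemma \ref{KEY LEMMA} with $x_i$ specialized to $1$. The point to check is that the chain rule introduces no stray factors of $x_i$, i.e.\ that $\partial/\partial(x_k/x_i)$ on the chart corresponds exactly to $\partial_k$ after specialization. This works because $\partial_k$ for $k \ne i$ commutes with setting $x_i = 1$. A second point is that $U_{ij}$ may be empty or may miss parts of $V(J^*_F)$. This is harmless, since the sets $U_{ij}$ cover the complement of $I_f$.
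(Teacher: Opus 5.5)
Your proposal is correct and follows the paper's proof. You use the same cover $U_{ij}=U_i\cap f^{-1}(U_j)$ and apply Lemma \ref{KEY LEMMA} followed by Corollary \ref{Jstar formula}, concluding as the paper does that the chart Jacobian determinant and $J^*_F$ differ by the unit $\pm F_j^{-(n+1)}$ on $U_{ij}$. The only cosmetic difference is that the paper re-homogenizes, using the degrees of $J^*_F$ and $F_j$, to state the identity in $K[\mathbf{x}][1/x_i,1/F_j]$, whereas you stay in the dehomogenized chart, which works equally well.
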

\begin{proof}
It suffices to show that the sheaves of principal ideals defining $C_f$ and $V(J^*_F)$ agree on some open cover of $\PP^n \smallsetminus I_f$.
For each $i$ 
let $U_i \subset \PP^n$ be the affine space defined by inverting $x_i$, with coordinates 
$t_\ell := x_{\ell-1}/x_i$ for $\ell = 1, \ldots, i$
and $t_\ell := x_\ell/x_i$ for $\ell = i+1, \ldots, n$.
For each pair $i, j$ 
let $U_{ij} := U_i \cap f^{-1}(U_j)$.
Note that $\bigcup_j U_{ij} = U_i \smallsetminus I_f$ and that $\bigcup_{i,j} U_{ij} = \PP^n \smallsetminus I_f$.

Fix a lift $F$ of $f$.
In the open set $U_{ij}$ 
(where $x_i \ne 0$ and $F_j \ne 0$),
the rational map 
\[f = [F_0 : \ldots : F_n]\]
is given by the dehomogenization
\[f_{ij} : U_{ij} \to U_j\]
\[
f_{ij}(t_1, \ldots, t_n) 
:= \Big(\frac{F_0}{F_j}, \ldots, \frac{F_{j-1}}{F_j}, \frac{F_{j+1}}{F_j}, \ldots, \frac{F_n}{F_j}\Big)
(t_1, \ldots, t_i, 1, t_{i+1}, \ldots, t_n).\]
By Definition \ref{defn_cf_ratl}, $C_f \cap U_{ij}$ is cut out by $\det {(f_{ij})'}$.
By Lemma \ref{lem_computing_cf_for_opens}, 
the differential is represented by the $n$-by-$n$ matrix 
\begin{align*}
    (f_{ij})'\Big|_{(t_1, \ldots, t_n)}
    &:=
    \frac{\partial f_{ij}}{\partial(t_1, \ldots, t_n)}
    (t_1, \ldots, t_n)
    \\
    &\phantom{:}=
    \frac{\partial (F_k/F_j : k \ne j)}{\partial(x_\ell : \ell \ne i)}(t_1, \ldots, t_i, 1, t_{i+1}, \ldots, t_n).
\end{align*}
Taking determinants and using Lemma \ref{KEY LEMMA} plus Corollary \ref{Jstar formula}, we get
\begin{align*}
    \det {(f_{ij})'}
    \Big|_{(t_1, \ldots, t_n)}
    &= (-1)^{i+j} \cdot 1 \cdot \frac{J^*_F}{F_j^{n+1}}(t_1, \ldots, t_i, 1, t_{i+1}, \ldots, t_n). 
\end{align*}
Substituting
\[(t_1, \ldots, t_n) = \Big(\frac{x_0}{x_i}, \ldots, \frac{x_{i-1}}{x_i}, \frac{x_{i+1}}{x_i}, \ldots, \frac{x_n}{x_i}\Big)\]
we observe that 
\begin{align*}
    \frac{J^*_F}{F_j^{n+1}}
    \Big(\frac{x_0}{x_i}, \ldots, \frac{x_{i-1}}{x_i}, 1, \frac{x_{i+1}}{x_i}, \ldots, \frac{x_n}{x_i}\Big)
    &= x_i^{(n+1)d - (n+1)(d-1)} \frac{J^*_F}{F_j^{n+1}}(x_0, \ldots, x_n) \\
    &= \Big( \frac{x_i}{F_j(x)} \Big)^{n+1} J^*_F(x)
\end{align*}
because $J^*_F$ and $F_j$ are homogeneous of degrees $(n+1)(d-1)$  and $d$, respectively.
It follows that
\[
\det {(f_{ij})'}\Big|_{\big(\ldots, \frac{x_{i-1}}{x_i}, \frac{x_{i+1}}{x_i}, \ldots\big)}
=
(-1)^{i+j} 
\Big( \frac{x_i}{F_j(x)}\Big)^{n+1} J^*_F(x)
\]
as an identity in $K[{\bf x}][\frac{1}{x_i}, \frac{1}{F_j}]$.
This shows that $\det {(f_{ij})'}$ and $J^*_F$ differ only by a unit multiple, so they define the same principal ideal.
Thus the schemes $C_f$ and $V(J^*_F)$ coincide on $U_{ij}$.
\end{proof}

\begin{rmk}
The special case of Corollary \ref{Cf is VJF minus If} for birational maps of the plane ($n=2$)
appears implicitly in \cite[Theorem 3.5.6]{AlberichCarraminana} 
and explicitly in \cite[\S 2.48]{Lamy}.
The latter obtains (in our notation) the identity 
\[\det {(f_{22})}'\Big|_{(x_0, x_1)} = \frac{J_F}{d \cdot F_2^3}(x_0, x_1, 1)\]
via Euler's theorem (as in Lemma \ref{euler cramer}) 
and a quotient-rule calculation (as in Lemma \ref{KEY LEMMA}),
using this to show that $V(J_F) = \operatorname{Exc}(f)$ for $f \in \operatorname{Bir}^2_d$ when $\Char K \nmid d$.
\end{rmk}

\subsection{Further properties of $J^*$}
We conclude this section by collecting a few other valuable properties of the flat Jacobian.
Since $J = dJ^*$, each of these statements implies a corresponding statement involving the homogeneous Jacobian.

\begin{propn}
Let $\varphi : R \to S$ be a homomorphism of rings;
let $F, G \in R[{\bf x}]^{n+1}$ be tuples; 
let $\Phi \in R[{\bf x}]$ be a form.
\begin{enumerate}[label=(\alph*),ref=\thethm(\alph*)]
    \item \label{naturality} \emph{naturality:} $J^*_{\varphi(F)} = \varphi(J^*_F)$.
    \item \label{strong homogeneity} \emph{strong homogeneity:} $J^*_{\Phi F} = \Phi^{n+1} J^*_F$.
    \item \label{chain rule} \emph{chain rule:}
    $J^*_{F \circ G} = (J^*_F \circ G) J^*_G$.
\end{enumerate}
\end{propn}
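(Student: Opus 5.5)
Each of the three properties can be derived from a universal identity over $\ZZ$ followed by specialization. Over a base where $d$ is cancellable they reduce to the corresponding facts for the homogeneous Jacobian $J$. Throughout, $F$ is taken homogeneous of degree $d$, $G$ of degree $e$, and $\Phi$ of degree $m$.

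For \emph{naturality}, note that the coefficient map commutes with $\varphi$: $\theta_{\varphi(F)} = \varphi \circ \theta_F$ as ring maps $\ZZ[{\bf u}][{\bf x}] \to S[{\bf x}]$, with $\varphi$ extended coefficientwise. Applying both sides to $J^*_{n,d}$ gives $J^*_{\varphi(F)} = \varphi(J^*_F)$ immediately. This lets me prove (b) and (c) for universal tuples over a polynomial ring over $\ZZ$ (coefficients of $F$, $G$, $\Phi$ as indeterminates) and then specialize. Over that domain, and after inverting the relevant degrees (or simply embedding in the fraction field, characteristic $0$), the identity $J = dJ^*$ lets me cancel integers.

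For \emph{strong homogeneity}, $\Phi F$ has degree $d+m$. By the product rule, $D_{\Phi F} = \Phi D_F + F\,(\nabla\Phi)^{T}$, which is a rank-one update. The matrix determinant lemma then gives
\[
J_{\Phi F} = \Phi^{n+1} J_F + \Phi^{n}\,(\nabla \Phi)^{T}\operatorname{adj}(D_F)\,F.
\]
From Euler, $D_F x = dF$, so $\operatorname{adj}(D_F) F = \tfrac{1}{d} J_F\, x$. Hence the correction term is $\Phi^n J_F \tfrac{1}{d}(\nabla\Phi\cdot x) = \tfrac{m}{d}\Phi^{n+1}J_F$. Altogether $J_{\Phi F} = \tfrac{d+m}{d}\Phi^{n+1}J_F$, and dividing by $d+m$ yields $J^*_{\Phi F} = \Phi^{n+1}J^*_F$ in characteristic $0$. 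Specialization transports the identity to any $R$. (I would handle $d=0$ or $d+m=0$ separately. For instance, when $d=0$ the form $J^*_F$ vanishes and $\Phi F$ has rank $\le 1$ Jacobian structure, so the case should be checked directly or via the $D^{(i)}$ formula of Corollary \ref{Jstar formula}.)

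For the \emph{chain rule}, $F\circ G$ has degree $de$, and the ordinary chain rule gives $D_{F\circ G} = (D_F\circ G)\,D_G$. Taking determinants yields $J_{F\circ G} = (J_F\circ G)J_G$, which becomes $de\,J^*_{F\circ G} = d(J^*_F\circ G)\,e J^*_G$. Cancelling $de$ in the characteristic-$0$ universal ring and then specializing by naturality finishes the argument; the degenerate cases $d=0$ or $e=0$ are trivial since both sides vanish. The main obstacle I anticipate is the bookkeeping in (b): the adjugate/Euler step and the degenerate-degree cases. I need to verify that $J^*$ is defined compatibly when a degree is zero, since only then is the cancellation argument legitimate.
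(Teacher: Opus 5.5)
Your proposal is correct. Parts (a) and (c) follow the paper. For (a), naturality is exactly $\theta_{\varphi(F)}=\varphi\circ\theta_F$ evaluated at $J^*_{n,d}$. For (c), the paper leaves the proof as an exercise, with the hint to compute $J_{F\circ G}$ for universal $F,G$ first. That is precisely your argument: apply the chain rule, cancel $de$ in a characteristic-$0$ domain, then specialize.

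For (b) your route is genuinely different. The paper never leaves the ring $R$. It writes $x_0J^*_{\Phi F}=\det\begin{bmatrix}\Phi F & \partial_1(\Phi F) & \cdots & \partial_n(\Phi F)\end{bmatrix}$ by Corollary~\ref{Jstar formula} and expands $\partial_j(\Phi F)=\Phi\,\partial_jF+F\,\partial_j\Phi$. It then removes the $F\,\partial_j\Phi$ summands by a column operation against the first column $\Phi F$, pulls out $\Phi^{n+1}$, and cancels the non-zero-divisor $x_0$. That argument needs no universal ring, no division by $d+m$, and no separate treatment of $d=0$.

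Your rank-one update with the matrix determinant lemma instead yields an identity for the ordinary Jacobian, $J_{\Phi F}=\frac{d+m}{d}\Phi^{n+1}J_F$, i.e.\ $J_{\Phi F}=(d+m)\Phi^{n+1}J^*_F$. This is informative in its own right and puts (b) and (c) on the same footing. The price is the detour through a characteristic-$0$ domain and the degenerate-degree bookkeeping you were worried about.

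You can shrink that bookkeeping. By Cramer's rule, the $i$-th entry of $\operatorname{adj}(D_F)F$ is $\det D^{(i)}_F=x_iJ^*_F$. Hence $\operatorname{adj}(D_F)F=J^*_F\,x$ over any ring, with no division by $d$, and so $J_{\Phi F}=\Phi^{n+1}(J_F+mJ^*_F)$ holds universally. The only remaining cancellation is by $d+m$. The case $d=0$, $m\ge 1$ is then automatic, and $d=m=0$ is trivial.

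One caveat: your $d=0$ argument (rank at most one), like the paper's appeal to Corollary~\ref{Jstar formula}, tacitly needs $n\ge 1$. For $n=d=0$ and $m\ge 1$ the identity in (b) actually fails: $J^*_{\Phi F}=ca\,x_0^{m-1}\neq 0$ for $F=(c)$ and $\Phi=a x_0^m$.
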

Note that part (b) generalizes \cite[Lemma 2.3]{BlancHeden} to arbitrary characteristic,
and part (c) is used in \cite[proof of Proposition 2.7(1)]{BlancHeden}.
\begin{proof}
\hfill 
\begin{enumerate}[label=(\alph*)]
    \item Let $\theta_F : \ZZ[{\bf u}] \to R$ 
    and $\theta_{\varphi(F)} : \ZZ[{\bf u}] \to S$ 
    be the specialization maps 
    sending $u_{i,\alpha}$ to $F_{i,\alpha}$ and $\varphi(F_{i,\alpha})$, respectively.
    By uniqueness, $\theta_{\varphi(F)} = \varphi \circ \theta_F$ (the triangle commutes).
    Now evaluate at $J^*_{n,d}$.
    \item By Corollary \ref{Jstar formula}, the product rule, and multilinearity of the determinant,
    \begin{align*}
        x_0 J^*_{\Phi F} &= 
        \det \begin{bmatrix} \Phi F & \partial_1 (\Phi F) & \cdots & \partial_n (\Phi F) \end{bmatrix}
        \\ &= \det \begin{bmatrix} \Phi F & \Phi \partial_1 F + F \partial_1 \Phi & \cdots & \Phi \partial_n F + F \partial_n \Phi \end{bmatrix}
        \\ &= \Phi \det \begin{bmatrix} F & \Phi \partial_1 F + F \partial_1 \Phi & \cdots & \Phi \partial_n F + F \partial_n \Phi \end{bmatrix}
        \\ &= \Phi \det \begin{bmatrix} F & \Phi \partial_1 F & \cdots & \Phi \partial_n F \end{bmatrix} 
        \\ &= \Phi \Phi^n \det \begin{bmatrix} F & \partial_1 F & \cdots & \partial_n F \end{bmatrix} 
        \\ &= \Phi^{n+1} x_0 J^*_F.
    \end{align*}
    \item Exercise. (Calculate $J_{F \circ G}$ for universal $F, G \in \ZZ[{\bf u}, {\bf v}][{\bf x}]^{n+1}$ first.)
    \qedhere
\end{enumerate}
\end{proof}

\section{The Examples} \label{sec:eg}

We assume 
\[\boxed{n, d \ge 2}\]
throughout this section.

\subsection{Specific implies general}

Let
\[
\mathcal{E}_{n,d} := \{f \in \End^n_d : C_f \text{ is an integral hypersurface}\}.
\]
We begin by showing that
in order to prove our main Theorem \ref{main} (that $\mathcal{E}_{n,d}$ is dense open), 
it suffices to exhibit (for each $n$ and $d$) just one example of a tuple $F$ with absolutely irreducible flat Jacobian.

\begin{lem} \label{lem_specific_implies_general}
Let $K$ be an algebraically closed field.
Suppose that the set 
\[O := \{[F] \in \Bar\End^n_d : J_F^* \text{ is irreducible}\}\]
is nonempty.
Then $\mathcal{E}_{n,d}$ is a Zariski-dense open subset of $\End^n_d$. 
\end{lem}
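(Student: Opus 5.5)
The proof has three parts: identify $\mathcal{E}_{n,d}$ with $O \cap \End^n_d$, show $O$ is Zariski open, and use irreducibility of $\Bar\End^n_d$ to get density.

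\emph{Step 1: $\mathcal{E}_{n,d} = O \cap \End^n_d$.} For $f = [F] \in \End^n_d$ we have $I_f = \varnothing$, so Theorem \ref{thm_flat_J_poly:b} gives $C_f = V_{\PP^n}(J^*_F)$ as schemes. By Proposition \ref{homogeneity properties}, $J^*_F$ is a form of degree $(n+1)(d-1) \ge 3$, so it is either $0$ or a nonconstant form. If it is irreducible, then $V(J^*_F)$ is an integral hypersurface, since $K[\mathbf{x}]$ is a UFD and $(J^*_F)$ is then a homogeneous prime ideal. Conversely, suppose $V(J^*_F)$ is an integral hypersurface. Then $J^*_F \ne 0$, since otherwise $V = \PP^n$ is not a hypersurface. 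It is a nonunit because its degree is positive. It has no repeated or distinct prime factors, since otherwise $V$ would be reducible or nonreduced. Hence $J^*_F$ is irreducible.

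\emph{Step 2: $O$ is Zariski open in $\Bar\End^n_d$.} Consider the map $\Bar\End^n_d \dashrightarrow \F^n_e$ with $e = (n+1)(d-1)$, sending $[F] \mapsto [J^*_F]$. It is well defined up to scaling because $J^*$ is homogeneous of degree $n+1$ in $\mathbf{u}$. It is a morphism on the open set $W$ where $J^*_F \ne 0$; note $W$ is open since it is the complement of the common vanishing locus of the coefficients of $J^*_{n,d}$, which are polynomials in $\mathbf{u}$. By construction $O = \{[F] \in W : [J^*_F] \in \H^n_e\}$, which is the preimage of the open set $\H^n_e$ (Proposition \ref{Hnd nonempty}) under a morphism. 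Hence $O$ is open. Alternatively, one can argue directly: the set of $[F]$ for which $J^*_F$ factors nontrivially is the image of a closed incidence set under a projection from a product of projective spaces. That projection is proper, so the image is closed.

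\emph{Step 3: density.} $\Bar\End^n_d$ is a projective space, hence irreducible. The set $O$ is nonempty by hypothesis, and $\End^n_d$ is nonempty and open by Lemma \ref{rat end qp}. Two nonempty open subsets of an irreducible space meet, so $O \cap \End^n_d$ is a nonempty open subset of $\End^n_d$ and therefore dense. By Step 1 this set is $\mathcal{E}_{n,d}$.

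The main point needing care is Step 2, in particular checking that $[F] \mapsto [J^*_F]$ is a genuine morphism on $W$ with $O \subseteq W$, together with the scheme-theoretic equivalence in Step 1 between \emph{integral hypersurface} and \emph{irreducible form}.
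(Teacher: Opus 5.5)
Your proof is correct and follows the paper's argument. You identify $\mathcal{E}_{n,d}$ with $O \cap \End^n_d$ via Theorem~\ref{thm_flat_J_poly:b}, realize $O$ as the preimage of the open set $\H^n_m$ under the morphism $[F] \mapsto [J^*_F]$ defined off the common zero locus of the coefficients of $J^*_{n,d}$, and conclude by irreducibility of $\Bar\End^n_d$; your Step~1 also spells out the ``integral hypersurface $\Leftrightarrow$ irreducible form'' equivalence that the paper leaves implicit.
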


\begin{proof}
By Theorem \ref{thm_flat_J_poly:b}, 
\[\mathcal{E}_{n,d} = {\End^n_d} \cap O.\]
Since $\Bar\End^n_d$ is irreducible, 
``dense open'' is equivalent to ``nonempty open'', 
and the class of dense opens is closed under intersection. 
Thus, $\mathcal{E}_{n,d}$ is dense open if and only if $O$ is nonempty open. 

By hypothesis, $O$ is nonempty; 
to show $O$ is open, we argue as follows.
Write $J^*_{n,d} \in \ZZ[\bf u][\bf x]$ as 
\[J^*_{n,d} = \sum_{|\beta|=m} \Xi_\beta {\bf x}^\beta\]
where $m = (n+1)(d-1)$ and the $\Xi_\beta$ are homogeneous forms of degree $n+1$ in the variables $\mathbf{u} = u_{i,\alpha}$ (the coordinates on $\Bar\End^n_d$).
Let $U := \Bar\End^n_d \smallsetminus V(\Xi_{\beta} : |\beta| = m)$
and let $j_0 : U \to \F^n_m$ be the function sending $[F]$ to $[J^*_F]$. 
It is clear that $j_0$ is well-defined: 
on $U$, $J^*_F$ is a nonzero form of degree $m$ in $n+1$ variables, 
so $[J^*_F] \in \F^n_m$; 
and for any nonzero scalar $\lambda$, 
$J^*_{\lambda F} = \lambda^{n+1} J^*_F$, so $[J^*_{\lambda F}] = [J^*_F]$ (cf.\ Proposition \ref{homogeneity properties}).

We claim that 
\[O = j_0^{-1}(\H^n_m).\]
By definition, $[F] \in O$ if and only if $[J^*_F] \in \H^n_m$; 
and since irreducibles are \emph{a fortiori} nonzero, $O \subseteq U$. 
This lets us write $[J^*_F]$ as $j_0([F])$, 
so that $[J^*_F] \in \H^n_m$ if and only if $[F] \in j_0^{-1}(\H^n_m)$---as desired. 
Finally, since $j_0$ is continuous and $\H^n_m$ is open (by Proposition \ref{Hnd nonempty}), $O$ is open.
\end{proof}

\subsection{The search for examples} \label{sec:eg motivation}

Before presenting the examples, 
we discuss some issues that arise in their construction.
Many of these difficulties already figure in the tame case $\Char K \nmid d$,
where $J^*_F$ and $J_F$ are associates and hence factor identically,
so with little loss of generality 
we phrase our informal discussion in terms of $J_F$.

Finding an explicit tuple $F$ such that $J_F$ is irreducible is nontrivial.
While one might hope to start with an obviously-irreducible determinant and back-solve for a tuple $F$ realizing it, this is prohibitively difficult because not every matrix of forms is a Jacobian.
Indeed, a na\"ive dimension-count shows that the $K$-linear function
\begin{align*}
K[{\bf x}]_d^{n+1} &\to \mathsf{M}_{n+1}(K[{\bf x}]_{d-1})
\\
F &\mapsto D_F
\end{align*}
is far from surjective.
Thus, we are forced to try various $F$'s and see what works.

Thanks to Lemma \ref{lem_specific_implies_general}, we need not worry about common zeroes. 
That said, Proposition \ref{strong homogeneity} implies that $J_F$ is divisible by $\gcd(F)$, so at the very least $F$ must be a rational map.
However, many well-studied classes of maps are precluded by the following caveats.
\begin{enumerate}
\item $F$ cannot be a monomial map, because
\begin{equation*} \label{eq_monomial}
    J_{x^A} = (\det A) x^{A_0 + \ldots + A_n - (1, \ldots, 1)}
\end{equation*}
where $A_0, \ldots, A_n$ are the rows of $A \in \mathsf{M}_{n+1}(\ZZ)$. 
\item $F$ cannot be ``minimally critical'' in the sense of \cite{Ingram} because 
\[J_{Ax^d} = (\det A) d^{n+1} (x_0 \ldots x_n)^{d-1}\]
for $A \in \mathsf{M}_{n+1}(K)$.
\item $F$ cannot be a symmetric power \cite[Proposition 7.1]{Ole}.
\item It can be shown that if any $F_i$ is divisible by the square of a form $\Phi$, 
then $J_F$ is divisible by $\Phi$.
Thus, every component of $F$ must be squarefree.
In particular, $F$ cannot be a polynomial map (i.e.\ the extension of a self-map of some $\AA^n \subset \PP^n$);
and if $d > n+1$ then $F$ cannot have any monomial component at all.
\end{enumerate}
While the simplicity of their form makes such maps attractive candidates to study in other contexts (e.g.\ dynamics),
it is precisely that simplicity which makes them unsuitable for our goal.
The moral is, $F$ cannot be too simple lest $J_F$ be reducible.


On the other hand, $F$ cannot be too complicated. 
Yes, any sufficiently complex example ought to work, 
but for general $F$ every coefficient of $J_F$ is nonzero, 
and \emph{proving} the absolute irreducibility of a ``random'' high-degree polynomial in many variables
is not easy.
The types of polynomials that arise as $J_F$'s do not seem amenable to standard irreducibility tricks like Eisenstein's criterion or Newton polygons, 
which require fine control over the valuations of the coefficients 
of the polynomial at hand.
Nor did we wish to check Ruppert's equations \cite[\S 3.2]{Schinzel} for the reducible locus in $\F^n_m$, as those are far too numerous, are only given explicitly in characteristic 0, and are better suited to fixed $n$ and $m$ anyway.
Lastly, we desired a elementary proof that could be verified manually. 

These considerations led us to settle on the following \textit{ad hoc} strategy for establishing irreducibility:
show that the specialization $J_F(x_0, x_1, x_2, 0, \ldots, 0)$ is linear in $x_2$, and apply Lemmas \ref{geo irr} and \ref{lin irr}.
Geometrically, this is analogous to intersecting $C_f$ with the linear subvariety $V(x_3, \ldots, x_n) \cong \PP^2$ and showing that the resulting plane curve is birational to $\PP^1$ via the projection $(x_0 : x_1 : x_2) \mapsto (x_0 : x_1)$.

Still, this proof strategy relies on $J_F$ having a readily computable ``closed form''.
Naturality of the determinant means that $J_F(x_0, x_1, x_2, 0, \ldots, 0)$ can be computed \emph{without} full knowledge of $J_F$, 
and setting so many variables to 0 causes many entries of $D_F$ to vanish. 
While this sparsity does already facilitate the calculation of the determinant somewhat, 
it does not ensure linearity in $x_2$.

The key to the latter lies in picking $F$ so that $D_F$ is primarily made up of repeating elements: pure powers of $x_0$ and $x_1$, arranged in a ``quasi-circulant'' structure, along just two diagonals, with the remaining variables confined to essentially just two columns.
This is achieved using the trivial observations that
\begin{itemize}
\item if $F_i$ does not depend on $x_j$, then $\partial_j F_i$ is zero; and
\item if $F_i$ is linear in $x_j$, then $\partial_j F_i$ is the coefficient of $x_j$ in $F_i$.
\end{itemize}
Ultimately, we land on a tuple $F$ such that $F_1, \ldots, F_n$ are binomials and every $F_i$ is concentrated on $x_0, x_1$ (i.e.\ has degree $\le 1$ in the other variables), leaving $F_0$ otherwise generic.

Our carefully chosen $F$ lets us obtain
\[J_F(x_0, x_1, x_2, 0, \ldots, 0) = P(x_0, x_1) + Q(x_0, x_1) x_2\]
for some explicit polynomials $P$ and $Q$ depending on $F_0$.
Now the application of Lemma \ref{lin irr} requires the verification 
that $\gcd(P, Q) = 1$, 
i.e.\ 
that $P$ and $Q$ have no common projective roots. 
Fortunately, $P$ factors nicely, so this verification is relatively trivial;
but for technical reasons we expedite the task using the strange resultant--discriminant identity of Lemma \ref{res jac disc}.
Keeping $F_0$ generic affords us enough flexibility to ensure $\Res(P, Q) \ne 0$, 
which proves $J_F$ is irreducible.

\subsection{The examples}

\begin{propn} \label{unified eg}
Let $e \in \{0, 1\}$
and define $F \in K[{\bf x}]_d^{n+1}$ by 
\begin{align*}
    F_0 &:= G(x_0, x_1) x_2^e \\
    F_i &:= x_1^{d-1} x_i - x_0^{d-1} x_{i+1} & (1 \le i \le n)
\end{align*}
where $x_{n+1} = x_0$ and $G$ is homogeneous of degree $d-e$.
Assume $\Char K \nmid d - 1 + e$.
If 
\begin{enumerate}
    \item the coefficients of $x_0 x_1^{d-e-1}$ and $x_0^{d-e-1} x_1$ in $G$ are nonzero,
    \item $G$ has no repeated projective roots, and
    \item $G$ has no projective roots in common with $x_1^{n(d-1)+1} - x_0^{n(d-1)+1}$,
\end{enumerate}
then $J^*_F$ is (absolutely) irreducible.
\end{propn}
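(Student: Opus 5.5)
The plan is to use the strategy outlined in Section~\ref{sec:eg motivation}. Let $\ev$ denote the evaluation $x_3 = \cdots = x_n = 0$ (for $n = 2$ this does nothing). I will show that
\[
\ev(J^*_F) = P(x_0,x_1) + Q(x_0,x_1)\,x_2
\]
with $Q \neq 0$ and $P, Q$ coprime in $K[x_0,x_1]$. Then Lemma~\ref{lin irr}, applied over $R = K[x_0,x_1]$, gives irreducibility of $\ev(J^*_F)$. Since $J^*_F$ is homogeneous, Lemma~\ref{geo irr} then gives irreducibility of $J^*_F$ itself. Because $K$ is algebraically closed, this is absolute irreducibility.

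\emph{Computing the evaluated determinant.} To handle the wild case uniformly, I will not use $J_F$. Instead I will compute $J^*_F$ through Corollary~\ref{Jstar formula}, as $\det D^{(i)}_F / x_i$ for a convenient $i$ (probably $i = 0$ or $i = 1$), and by naturality I may evaluate the matrix before taking the determinant. The relevant entries are as follows.
\begin{itemize}
\item For $1 \le i \le n$, row $i$ has $\partial_i F_i = x_1^{d-1}$ (plus an extra $(d-1)x_1^{d-2}x_1$ when $i = 1$) and $\partial_{i+1}F_i = -x_0^{d-1}$.
\item Columns $0$ and $1$ pick up $-(d-1)x_0^{d-2}x_{i+1}$ and $(d-1)x_1^{d-2}x_i$ respectively.
\item After $\ev$, these column-$0$ and column-$1$ contributions survive only in rows $1$, $2$ and $n$. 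Row $n$ is special because $F_n = x_1^{d-1}x_n - x_0^d$, so its column-$0$ entry is $-d\,x_0^{d-1}$.
\item Row $0$ involves only $G$, its partials, and (if $e = 1$) the entry $\partial_2 F_0 = G$.
\end{itemize}
So the evaluated matrix is a two-diagonal ``quasi-circulant'' block in columns $2,\dots,n$, bordered by two dense columns. I would expand along the border columns, or use row operations to clear the bidiagonal. The cyclic wrap-around from $x_{n+1} = x_0$ produces a product like $x_1^{(n-1)(d-1)}$ versus $x_0^{(n-1)(d-1)}$ of diagonal entries. This is where I expect the binomial $x_1^{n(d-1)+1} - x_0^{n(d-1)+1}$ to emerge. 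I will then check that $x_2$ enters at most linearly, since $x_2$ occurs in only a few entries, which lie in distinct rows and columns only in controlled ways.

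\emph{Identifying $P$ and $Q$.} I expect the coefficients to take the following shape, up to explicit constants and monomials.
\begin{itemize}
\item $P$ should be $G$ (or $G$ times a derivative-type factor) times a monomial in $x_0, x_1$.
\item $Q$ should be $(d-1+e)$ times a combination of the form $(\partial_x G)\,y^{k} \pm (\partial_y G)\,x^{k}$ with $k = n(d-1)$, again up to monomial factors.
\end{itemize}
The factor $d-1+e$ comes from Euler's theorem applied to $G x_2^e$. The hypothesis $\Char K \nmid d-1+e$ is exactly what keeps this factor, and hence $Q$, nonzero.

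\emph{Coprimality (the main obstacle).} The main obstacle is getting the determinant into this exact closed form with correct signs and exponents; the rest is bookkeeping. Once that is done, coprimality of $P$ and $Q$ splits into two checks.
\begin{itemize}
\item Common roots at $[0:1]$ or $[1:0]$ coming from the monomial factors. These are excluded by hypothesis (1): the nonvanishing coefficients of $x_0x_1^{d-e-1}$ and $x_0^{d-e-1}x_1$ in $G$ make $\partial G$ nonvanishing there.
\item Common roots of $G$ with the Jacobian-type factor. By Lemma~\ref{res jac disc}, the resultant of these two forms is $\Disc(G)\,\Res(G, y^{k+1} - x^{k+1})$. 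This is nonzero by hypotheses (2) and (3).
\end{itemize}
Together these give $\gcd(P,Q) = 1$, which completes the argument.
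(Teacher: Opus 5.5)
Your outline is the paper's own. You restrict to $x_3=\cdots=x_n=0$, get something linear in $x_2$, and conclude with Lemmas \ref{lin irr} and \ref{geo irr}, using hypothesis (1) at $[0:1],[1:0]$ and Lemma \ref{res jac disc} with (2), (3) at the roots of $G$. Computing $J^*_F=\det D^{(i)}_F/x_i$ directly over $K$ is a legitimate alternative to the paper's route, which computes $J_F$ over $\ZZ[\mathbf v]$ and then divides by $d$.

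The gap is that the substance of the proposition is the closed form of $\ev(J^*_F)$. You defer it as ``the main obstacle'', and the form you predict is wrong in a way that changes the logic. The actual result is
\[
\ev(J^*_F)=
\begin{cases}
(d-1)\,G\,x_0^{d-2}x_1^{(n-1)(d-1)-1}\,x_2+Q, & e=0,\\
Q\,x_2+d\,G\,x_0^{(n-1)(d-1)}x_1^{d-1}, & e=1,
\end{cases}
\qquad
Q:=\partial_0G\,x_1^{n(d-1)}+\partial_1G\,x_0^{n(d-1)}.
\]
So the scalar $d-1+e$ multiplies the $G$-term, not the derivative combination. Moreover, when $e=0$ it is the $G$-term that multiplies $x_2$. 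Your accounting of the hypotheses is therefore off:
\begin{itemize}
\item $Q\neq0$, and $Q$ nonvanishing at $[0:1]$ and $[1:0]$, come from hypothesis (1), since $Q(0,1)=\partial_0G(0,1)$ and $Q(1,0)=\partial_1G(1,0)$.
\item $\Char K\nmid d-1+e$ is what keeps the $G$-term nonzero. If it failed, the restriction would be $Q$ alone when $e=0$ (a binary form of degree $(n+1)(d-1)\ge 3$ with no $x_2$ at all), or $Q\,x_2$ when $e=1$. Either way the argument collapses.
\end{itemize}
Your version attaches the characteristic hypothesis to the derivative combination, which is the wrong factor.

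Two further points cannot be settled by the heuristics you give.
\begin{itemize}
\item Lemma \ref{res jac disc} needs the plus sign, $(\partial_xG)y^k+(\partial_yG)x^k$ with $x=x_0$, $y=x_1$. With a minus sign the binomial becomes $y^{k+1}+x^{k+1}$ and hypothesis (3) no longer applies, so the $\pm$ must be resolved by computation.
\item For $e=1$, linearity in $x_2$ does not follow from where $x_2$ sits. Several permutations contribute $x_2^2$ terms, and these cancel only after collecting. In the paper's $J_F$ computation the $x_2^{e+1}$-coefficient is $(d-1)x_0^{d-2}x_1^{(n-1)(d-1)-1}\bigl(x_0\partial_0G+x_1\partial_1G-(d-1)eG\bigr)$, which Euler's theorem turns into $d(1-e)(d-1)G\,x_0^{d-2}x_1^{(n-1)(d-1)-1}$.
\end{itemize}
Once the displayed formula is established, your coprimality argument goes through. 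Writing $P$ for the $G$-term, it packages as
\[
\Res(P,Q)=\pm(d-1+e)^{N_1}\,\partial_0G(0,1)^{N_2}\,\partial_1G(1,0)^{N_3}\,\Disc(G)\,\Res\bigl(G,\,x_1^{n(d-1)+1}-x_0^{n(d-1)+1}\bigr)\neq0
\]
for suitable exponents $N_i\ge 0$.
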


The proof strategy sketched in \S\ref{sec:eg motivation} goes through without a hitch in characteristic 0, but may fail if $\Char K$ divides $d$ or $d-1$; yet the positive-characteristic case is conceptually identical.
Compared to the sketch, the formal proof given below is a bit more convoluted in order to handle all characteristics simultaneously.
Moreover, to minimize dependence on new material, we utilize the familiar Jacobian $J_F$ as long as possible; this requires working over $\ZZ$ first.
Readers comfortable with the flat Jacobian may calculate $J^*_F$ directly over $K$ using Corollary \ref{Jstar formula}.

\begin{proof}[Proof of Proposition \ref{unified eg}]
For our initial calculations we work over $\ZZ$, viewing the coefficients of $G$ as indeterminates ${\bf v} = v_0, \ldots, v_{d-e}$.
Let 
\[G := \sum_{i=0}^{d-e} v_i x_0^i x_1^{d-e-i}\]
and define $F_0, \ldots, F_n \in \ZZ[{\bf v}][{\bf x}]$ 
using the expressions in the statement of the Proposition.
Observe that for each $i > 0$, the polynomial $F_i$ depends only on the variables $x_0, x_1, x_i, x_{i+1}$. 
Taking partial derivatives, the Jacobian matrix $D_F$ of $F$ is
\small
\[
\begin{bmatrix}
    A x_2^e & B x_2^e & eGx_2^{e-1} & 0 & 0 & \cdots & 0 & 0 \\
    (1-d)x_0^{d-2} x_2 & d x_1^{d-1} & -x_0^{d-1} & 0 & 0 & \cdots & 0 & 0 \\
    (1-d)x_0^{d-2} x_3 & (d-1)x_1^{d-2} x_2 & x_1^{d-1} & -x_0^{d-1} & 0 & \cdots & 0 & 0 \\
    (1-d)x_0^{d-2} x_4 & (d-1)x_1^{d-2} x_3 & 0 & x_1^{d-1} & -x_0^{d-1} & \cdots & 0 & 0 \\
    (1-d)x_0^{d-2} x_5 & (d-1)x_1^{d-2} x_4 & 0 & 0 & x_1^{d-1} & \cdots & 0 & 0 \\
    \vdots & \vdots & \vdots & \vdots & \vdots & \ddots & \vdots & \vdots \\
    (1-d)x_0^{d-2} x_n & (d-1)x_1^{d-2}x_{n-1} & 0 & 0 & 0 & \cdots & x_1^{d-1} & -x_0^{d-1} \\
    -dx_0^{d-1} & (d-1)x_1^{d-2}x_n & 0 & 0 & 0 & \cdots & 0 & x_1^{d-1}
\end{bmatrix}\]
\normalsize
where $A := \partial_0 G$ and $B := \partial_1 G$ for ease of notation.
Let 
\[\Tilde{J_F}(x_0, x_1, x_2) := J_F(x_0, x_1, x_2, 0, \ldots, 0).\]
By naturality of the determinant, 
\small 
\[
\Tilde{J_F}  =  \begin{vmatrix}
    A x_2^e & B x_2^e & eGx_2^{e-1} & 0 & 0 & \cdots & 0 & 0 \\
    (1-d)x_0^{d-2} x_2 & d x_1^{d-1} & -x_0^{d-1} & 0 & 0 & \cdots & 0 & 0 \\
    0 & (d-1)x_1^{d-2} x_2 & x_1^{d-1} & -x_0^{d-1} & 0 & \cdots & 0 & 0 \\
    0 & 0 & 0 & x_1^{d-1} & -x_0^{d-1} & \cdots & 0 & 0 \\
    0 & 0 & 0 & 0 & x_1^{d-1} & \cdots & 0 & 0 \\
    \vdots & \vdots & \vdots & \vdots & \vdots & \ddots & \vdots & \vdots \\
    0 & 0 & 0 & 0 & 0 & \cdots & x_1^{d-1} & -x_0^{d-1} \\
    -dx_0^{d-1} & 0 & 0 & 0 & 0 & \cdots & 0 & x_1^{d-1}
\end{vmatrix}.
\]
\normalsize 
Expanding down the leftmost column and noting that the cofactors are block triangular,
we obtain 
\begin{align*}
    \Tilde{J_F} &= 
    Ax_2^e \big(dx_1^{2(d-1)} + (d-1)x_0^{d-1} x_1^{d-2} x_2\big) (x_1^{d-1})^{n-2} 
    \\
    & - (1-d) x_0^{d-2} x_2 \big(B x_1^{d-1} x_2^e - (d-1)e G x_1^{d-2} x_2^e \big) (x_1^{d-1})^{n-2} 
    \\
    & + (-1)^n (-dx_0^{d-1}) \big({-B x_0^{d-1}} x_2^e - de G x_1^{d-1} x_2^{e-1} \big)(-x_0^{d-1})^{n-2}.
\end{align*}
Note that all signs cancel in the last line, giving
\[
\ldots + dx_0^{d-1} \big(B x_0^{d-1} x_2^e + de G x_1^{d-1} x_2^{e-1} \big)(x_0^{d-1})^{n-2}.
\]
Next, consider $\Tilde{J_F}$ as a polynomial in $x_2$ with coefficients in $\ZZ[{\bf v}][x_0, x_1]$:
\begin{align*}
    \Tilde{J_F} 
    &= (d-1) x_0^{d-2} x_1^{(n-1)(d-1)-1} \big(
    A x_0 + B x_1 - (d-1) e G
    \big) x_2^{e+1} \\
    &+ d\big( 
    A x_1^{n(d-1)} + B x_0^{n(d-1)}
    \big) x_2^e 
    \, + \, 
    \big(d^2e G x_0^{(n-1)(d-1)} x_1^{d-1}\big) x_2^{e-1}.
\end{align*}
Since $G$ is homogeneous of degree $d-e$ in $x_0$ and $x_1$, Euler's theorem \ref{euler} implies 
\[
A x_0 + B x_1 - (d-1) e G
= (d-e)G - (d-1)eG
= d(1-e) G.
\]
Thus each term of $\Tilde{J_F}$ is divisible by $d$ as expected, 
and so
\begin{align*}
    \Tilde{J^*_F}(x_0, x_1, x_2) 
    &:= J^*_F(x_0, x_1, x_2, 0, \ldots, 0) \\
    &\phantom{:}= \big( (d-1)(1-e) G x_0^{d-2} x_1^{(n-1)(d-1)-1}\big) x_2^{e+1} \\
    &\phantom{:}+ \big( A x_1^{n(d-1)} + B x_0^{n(d-1)} \big) x_2^e 
    \, + \, 
    \big(de G x_0^{(n-1)(d-1)} x_1^{d-1}\big) x_2^{e-1}
\end{align*}
by Theorem \ref{thm_flat_J_poly:a} 
plus commutativity of evaluation maps affecting disjoint sets of variables.
Although \emph{prima facie} $\Tilde{J^*_F}$ has 3 terms, it is in fact linear in $x_2$.
Indeed, if we write
\begin{align*}
P_0 &:= (d-1)Gx_0^{d-2} x_1^{(n-1)(d-1)-1} 
\\
P_1 &:= dGx_0^{(n-1)(d-1)}x_1^{d-1}
\shortintertext{and}
Q &:= A x_1^{n(d-1)} + B x_0^{n(d-1)}
\end{align*}
then
\[
\Tilde{J^*_F} 
= (1-e)P_0 x_2^{e+1} + Q x_2^e + eP_1 x_2^{e-1}
= 
\begin{dcases*}
    P_0 x_2 
    + Q & if $e = 0$, \\
    Q x_2 
    + P_1 & if $e = 1$.
\end{dcases*}
\]
Next, we calculate the resultant 
of $P_e$ and $Q$ in $\ZZ[{\bf v}]$.
Since $P_e$ is a product of $d-1+e$ and $G$ with powers of $x_0$ and $x_1$, 
homogeneity and multiplicativity entail
\[
\Res(P_e, Q) 
= 
(d-1+e)^{N_1} \Res(x_0, Q)^{N_2} \Res(x_1, Q)^{N_3} \Res(G, Q) 
\]
for some exponents $N_i \ge 0$.
Easily, 
\[\Res(x_0, Q) = Q(0, 1) = \partial_0 G(0, 1) \quad\text{and}\quad \Res(x_1, Q) = Q(-1, 0) = \pm \partial_1 G(1, 0)\]
while $\Res(G, Q)$ 
is given by Lemma \ref{res jac disc}.
It follows that $\Res(P_e, Q)$ is equal to
\[\pm (d-1+e)^{N_1} \partial_0 G(0, 1)^{N_2} \partial_1 G(1, 0)^{N_3} \Disc(G) \Res(G, x_1^{n(d-1)+1} - x_0^{n(d-1)+1})\]
in $\ZZ[{\bf v}]$.
This completes our explicit calculations.

To conclude the proof,
let $\rho : \ZZ[{\bf v}] \to K$ be the ring homomorphism sending $v_i$ to the corresponding coefficient of $G$ in $K$.
By our hypotheses (1--3) on $G$ and our assumption on $\Char K$,
\[\Res(\rho(P_e), \rho(Q)) \ne 0.\]
Moreover, $m-1 = (n+1)(d-1)-1 \ge 2$ since $n, d \ge 2$.
Thus $\rho(P_e)$ and $\rho(Q)$ are nonzero, and they have no common projective roots over $K = \Bar K$, i.e.\ they are coprime in $K[x_0, x_1]$.
By Lemma \ref{lin irr},
$\rho(\Tilde{J^*_F})$, which equals one of
\[\rho(P_0)x_2 + \rho(Q) \quad\text{or}\quad \rho(Q)x_2 + \rho(P_1),\]
is irreducible in $K[x_0, x_1, x_2]$.
But by naturality (\ref{naturality}), $\rho(\Tilde{J^*_F}) = \Tilde{J^*_{\rho(F)}}$.
Thus by Lemma \ref{geo irr}, $J^*_{\rho(F)}$ is irreducible in $K[{\bf x}]$.
\end{proof}

Combining the reduction step with the key example, we prove our main result.

\begin{proof}[Proof of Theorem \ref{main}]
Proposition \ref{unified eg} exhibits, in each dimension $n \ge 2$ and degree $d \ge 2$, 
two families of rational maps whose critical scheme is integral
in all but finitely many characteristics.
Since $\Char K$ cannot divide both $d$ and $d-1$, 
we see that the set
\[O = \{[F] \in \Bar\End^n_d : J^*_F \text{ is irreducible}\}\]
from Lemma \ref{lem_specific_implies_general}
is non-empty in all cases.
To be totally explicit: 
in the tame case $\Char K \nmid d$, take $e = 1$;
while in the wild case $\Char K \mid d$ we have $\Char K \nmid d-1$, so take $e = 0$.
\end{proof}

\section{From rational maps to morphisms} \label{sec:morphisms}
Our examples are far from morphisms, because $I_f \supseteq V(x_0, x_1)$ and so $\codim I_f = 2$.
In this section we ``interpolate'' between our example and the power map to produce a \emph{morphism} with irreducible critical locus.

\begin{propn} \label{lin comb}
Let $[F] \in \Bar\End^n_d$ be such that $J^*_F$ is irreducible, and let $[\Pi] \in \End^n_d$ be a morphism.
Then for all but finitely many $[s:t] \in \PP^1$, 
the linear combination 
\[sF + t\Pi = (sF_0 + t\Pi_0, \ldots, sF_n + t\Pi_n)\]
defines a degree-$d$ endomorphism of $\PP^n$ whose critical locus is an integral hypersurface.
\end{propn}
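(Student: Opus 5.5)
We restrict to the pencil through $[F]$ and $[\Pi]$ and apply Zariski openness along it. The line in $\Bar\End^n_d$ through $[F]$ and $[\Pi]$ has points $[sF + t\Pi]$ for $[s:t] \in \PP^1$. This is well defined because $F$ and $\Pi$ are not proportional: $J^*_\Pi$ is not irreducible (see below), while $J^*_F$ is, and by homogeneity proportional tuples have proportional flat Jacobians. Consider the morphism
\[\lambda : \PP^1 \to \Bar\End^n_d, \qquad [s:t] \mapsto [sF + t\Pi],\]
which is linear in $(s,t)$ on affine lifts. It is continuous for the Zariski topology. The target set, endomorphisms with integral critical hypersurface, is $\End^n_d \cap O$ by Theorem \ref{thm_flat_J_poly:b}, where $O$ is as in Lemma \ref{lem_specific_implies_general}. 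Its preimage is $\lambda^{-1}(\End^n_d) \cap \lambda^{-1}(O)$.

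The first preimage is open by Lemma \ref{rat end qp}. It contains $[0:1]$, since $\lambda([0:1]) = [\Pi]$ is a morphism, so it is a nonempty open subset of $\PP^1$. The second preimage is open because $O$ is open; this is the content of the proof of Lemma \ref{lem_specific_implies_general}, which used only $\H^n_m$ open and $j_0$ continuous. It contains $[1:0]$ by hypothesis. Two nonempty open subsets of the irreducible curve $\PP^1$ are cofinite and therefore meet in a cofinite set. That cofinite set is exactly the claim.

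The points needing care are modest. One must check that $O$'s openness was established independently of the nonemptiness hypothesis in Lemma \ref{lem_specific_implies_general}; it was, since the argument was $O = j_0^{-1}(\H^n_m)$ with $j_0$ defined on the open set $U$. One must also check that each $sF + t\Pi$ in the good set genuinely has degree $d$, i.e.\ is coprime and not identically zero. Membership in $\End^n_d$ already guarantees this, since $\End^n_d \subseteq \Rat^n_d$.

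Finally, I would verify the non-proportionality claim. If $[F] = [\Pi]$, then $[F]$ is a morphism in $O$ and the statement is trivial. So it is enough that the line is well defined, which holds whenever $F \ne c\Pi$; if $F = c\Pi$, the degenerate case is immediate. No step is a serious obstacle; the argument is just the two-open-sets-in-$\PP^1$ principle.
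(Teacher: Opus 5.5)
Your argument is correct and is essentially the paper's proof: restrict to the line through $[F]$ and $[\Pi]$, observe that its intersections with $O$ and with $\End^n_d$ are nonempty relatively open subsets of a copy of $\PP^1$, and intersect them to get a cofinite set. One small correction: your opening claim that $J^*_\Pi$ cannot be irreducible is false in general (any $[\Pi] \in \mathcal{E}_{n,d}$ is a counterexample), but your closing case split on $F = c\Pi$ covers that situation anyway, just as the paper's ``w.l.o.g.\ distinct'' does.
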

\begin{proof}
Let $L \subset \Bar\End^n_d$ be the projective line through the points $[F]$ and $[\Pi]$ (w.l.o.g.\ distinct).
Let $O$ be the set from Lemma \ref{lem_specific_implies_general}. 
By the proof of said Lemma, $O$ is open. 
Thus $L \cap O$ and $L \cap {\End^n_d}$ are relatively open subsets of $L$.
Moreover, $[F] \in L \cap O$ and $[\Pi] \in L \cap {\End^n_d}$ by construction, so these sets are non-empty.
Since $L$ is irreducible, the intersection $(L \cap O) \cap (L \cap {\End^n_d}) = L \cap \mathcal{E}_{n,d}$ is again relatively open and non-empty.
It follows that the set
\[\{[s:t] \in \PP^1 : [sF + t\Pi] \in \mathcal{E}_{n,d}\},\]
that is, the preimage of $L \cap \mathcal{E}_{n,d}$ under the natural parametrization $\PP^1 \overset{\sim}{\to} L$,
is a non-empty Zariski-open subset of $\PP^1$.
In particular, it is cofinite.
\end{proof}

It can be shown that the exceptional $[s:t]$'s are algebraic over any common field of definition of $F$ and $\Pi$. 
Together, Propositions \ref{lin comb} and \ref{unified eg} immediately yield morphism-examples over $\CC$.
We now exhibit a morphism-example over $\QQ$.

\begin{thm}[= Theorem \ref{morphism example intro}] \label{morphism example}
Let $n, d \ge 2$ and let $p$ be an odd prime not dividing $d-1$.
Define
\begin{align*}
    F_0 &:= px_0^d + 2x_0 x_1 \frac{x_1^{d-1}-x_0^{d-1}}{x_1-x_0} \\
    F_i &:= px_i^d + 2x_1^{d-1} x_i - 2x_0^{d-1} x_{i+1} \qquad (1 \le i \le n)
\end{align*}
where $x_{n+1} = x_0$.
Then 
\[f := [F_0 : \ldots : F_n]\]
is a degree-$d$ endomorphism of $\PP^n$ defined over $\QQ$ whose critical scheme $C_f$ is an integral hypersurface.
\end{thm}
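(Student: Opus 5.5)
The plan is to separate the two requirements by reducing modulo two different primes, as sketched in the Introduction. We show that $f$ is a morphism by reducing modulo $2$, and that $J^*_F$ is absolutely irreducible by reducing modulo $p$. Then Theorem~\ref{thm_flat_J_poly:b} gives $C_f = V(J^*_F)$, an integral hypersurface over $\Bar\QQ$. Note first that $F_0 \in \ZZ[\mathbf x]$, since $(x_1^{d-1}-x_0^{d-1})/(x_1-x_0) = H := \sum_{k=0}^{d-2} x_0^k x_1^{d-2-k}$. So $F \in \ZZ[\mathbf x]_d^{n+1}$, and we may reduce it modulo any prime.

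\emph{Morphism.} Since $p$ is odd, reducing modulo $2$ gives $F \equiv (x_0^d, \ldots, x_n^d)$, the power map. This has no common zero over $\Bar{\FF}_2$. Consider the Macaulay resultant, which is a polynomial with integer coefficients in the coefficients of the tuple (Lemma~\ref{rat end qp}). Evaluated at $F$, it reduces modulo $2$ to the resultant of the power map, which is nonzero. Hence the integer $\Res(F)$ is odd, in particular nonzero. So $F_0, \ldots, F_n$ have no common zero over $\Bar\QQ$, and $[F] \in \End^n_d(\Bar\QQ)$.

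\emph{Irreducibility modulo $p$.} Modulo $p$ we get $\bar F_0 = 2x_0x_1 H$ and $\bar F_i = 2(x_1^{d-1}x_i - x_0^{d-1}x_{i+1})$. By multilinearity (Proposition~\ref{strong homogeneity} with $\Phi=2$), $J^*_{\bar F} = 2^{n+1}J^*_{F'}$, where $F'$ is the tuple of Proposition~\ref{unified eg} with $e=0$ and $G = x_0x_1H$. We check the hypotheses over $\Bar\FF_p$.
\begin{itemize}
\item The characteristic condition is $p \nmid d-1$, which is assumed.
\item Condition (1): the coefficients of $x_0x_1^{d-1}$ and $x_0^{d-1}x_1$ in $G$ equal $1$. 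When $d=2$, $G=x_0x_1$ and this is still fine.
\item Condition (2): the roots of $G$ are $[0:1]$, $[1:0]$, and $[\zeta:1]$ for $\zeta^{d-1}=1$, $\zeta\ne1$. These are distinct because $p\nmid d-1$ makes $z^{d-1}-1$ separable.
\item Condition (3): $x_1^{N}-x_0^{N}$ with $N=n(d-1)+1$ does not vanish at $[0:1]$ or $[1:0]$. At $[\zeta:1]$ it equals $1-\zeta^N = 1-\zeta \ne 0$.
\end{itemize}
Hence $J^*_{\bar F}$ is irreducible over $\Bar\FF_p$, and it is nonzero of degree $m=(n+1)(d-1)$.

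\emph{Lifting to characteristic $0$ (main obstacle).} We must deduce that $J^*_F \in \ZZ[\mathbf x]$ is irreducible over $\Bar\QQ$; by naturality (Proposition~\ref{naturality}), its reduction modulo $p$ is $J^*_{\bar F}$. Suppose $J^*_F = AB$ with $A,B$ nonconstant forms over $\Bar\QQ$. Both factors are defined over some number field $L$. Take a prime $\mathfrak p \mid p$ of $\mathcal O_L$ and work in the DVR $\mathcal O_{L,\mathfrak p}$. Rescale $A$ and $B$ so that each has $\mathfrak p$-integral coefficients with some coefficient a unit; they are still nonconstant forms of positive degree. Then $J^*_F = uAB$ for some $u \in L$. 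Because the reduction of $J^*_F$ is nonzero, $J^*_F$ has a $\mathfrak p$-adic unit coefficient, and Gauss's lemma (reductions of primitive forms are nonzero, and the product of nonzero forms over a field is nonzero) forces $u$ to be a unit. Reducing gives a factorization of $J^*_{\bar F}$ over $\Bar\FF_p$ into two forms of positive degree. These are nonzero of the same degrees as $A$ and $B$, hence nonunits, a contradiction. Thus $J^*_F$ is absolutely irreducible in characteristic $0$.

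Combining the two parts, $[F] \in \End^n_d(\Bar\QQ)$ with $J^*_F$ irreducible. By Theorem~\ref{thm_flat_J_poly:b}, $C_f = V(J^*_F)$ is an integral hypersurface.
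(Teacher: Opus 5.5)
Your proposal is correct and follows essentially the same route as the paper. Both arguments reduce modulo $2$ to the power map to get a morphism, then reduce modulo $p$ to land in the $e=0$ family of Proposition \ref{unified eg} with $G = x_0x_1H$, check conditions (1)--(3), and lift irreducibility from $\Bar\FF_p$ to $\Bar\QQ$. You also supply details the paper leaves implicit, and all of them are sound: the parity of the Macaulay resultant (where the paper reduces points of $I_f$), the factor $2^{n+1}$ via strong homogeneity, and the Gauss-lemma argument for the specialization step.
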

\begin{proof}
Consider the reduction maps
\[I_f(\Bar\QQ) \to I_f(\Bar\FF_2) \quad\text{and}\quad C_f(\Bar\QQ) \to C_f(\Bar\FF_p).\]
Because $p$ is odd,
\[F \equiv (x_0^d, x_1^d, \ldots, x_n^d) \pmod 2\]
which implies $I_f(\Bar\FF_2) = \varnothing$;
thus $I_f(\Bar\QQ) = \varnothing$, so $f$ is a morphism.
Meanwhile, modulo $p$ and up to unit multiples, 
$F$ is congruent to a tuple of the shape considered in Proposition \ref{unified eg}:
namely, with $e = 0$ and 
\[G(x_0, x_1) := x_0 x_1 \frac{x_1^{d-1} - x_0^{d-1}}{x_1 - x_0}
= x_0^{d-1} x_1 + x_0^{d-2} x_1^2 + \ldots + x_0 x_1^{d-1}.\]
Over $\Bar\FF_p$, the polynomial $G$ 
    has the following properties:
\begin{enumerate}
    \item The coefficients of $x_0 x_1^{d-1}$ and $x_0^{d-1} x_1$ are each $1$.
    \item The projective roots of $G$ are $[1:0]$, $[0:1]$, and $[\zeta : 1]$ for each $\zeta \neq 1$ satisfying $\zeta^{d-1} = 1$. Since $p \nmid d-1$, the roots of $z^{d-1}-1$ in $\Bar\FF_p$ are pairwise distinct. Thus $G$ has no repeated projective roots.
    \item Direct evaluation shows that $[1:0]$ and $[0:1]$ are not projective roots of $x_1^{n(d-1)+1} - x_0^{n(d-1)+1}$.
    Neither are the remaining projective roots of $G$, since if $\zeta \neq 1$ satisfies $\zeta^{d-1} = 1$, then
    $$1^{n(d-1) + 1} - \zeta^{n(d-1) + 1} = 1 - \zeta \neq 0.$$
    Thus $G$ has no roots in common with $x_1^{n(d-1)+1} - x_0^{n(d-1)+1}$.
\end{enumerate}
Since $p \nmid d-1$, the characteristic hypothesis for the $e = 0$ family in Proposition \ref{unified eg} is also met. We conclude that $J^*_F$ is irreducible over $\Bar\FF_p$, hence over $\Bar\QQ$.
\end{proof}


\begin{rmk}
For the polynomial $G$ above, it can be shown that
\[\Disc(G) = -(d-1)^{d-3} \text{ and } \Res(G, x_1^{n(d-1)+1} - x_0^{n(d-1)+1}) = (-1)^{n(d-1)} (d-1).\]
\end{rmk}

\section{Inseparability} \label{sec:insep}

Inseparable maps are a common source of counterexamples in algebraic geometry over algebraically closed fields $K$ of positive characteristic $p$. In this section, we interpret inseparability in terms of the critical locus and the flat Jacobian.

\subsection{Checking separability}

Recall that a generically finite morphism of integral schemes is \emph{(generically) separable} if it is dominant and the induced extension of function fields is
separable. We say a rational map of varieties is \emph{(generically) separable} if it is represented on an open subset by a generically separable morphism. Over fields of characteristic $0$, a rational map is generically separable if and only if it is dominant. In positive characteristic, separability can be delicate to check. The following well-known criterion relates separability to the critical locus.

\begin{propn} \label{prop_cf_and_sep} 
Let $f : X \dashrightarrow Y$ be a rational map of equidimensional smooth varieties over an algebraically closed field $K$.
Then the following are equivalent.
\begin{enumerate}
    \item The map $f$ is generically separable.
    \item The critical locus $C_f$ is a proper subset of $X \smallsetminus I_f$.
    \item Every irreducible component of $C_f$ has codimension $1$ in $X \smallsetminus I_f$.
\end{enumerate}
\end{propn}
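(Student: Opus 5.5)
The plan is to reduce everything to the generic point, using Definition \ref{defn_cf_ratl} to replace $f$ by its restriction to the maximal domain of definition $U = X \smallsetminus I_f$. So I may assume $f : U \to Y$ is a morphism of smooth varieties of the same dimension $r$. The scheme $C_f$ is the zero scheme of the section $\det f'$ of a line bundle (Definition \ref{defn_cf_fulton_style}). Hence, on each trivializing affine open, it is cut out by a single function $a_U$.

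First I would prove (2)$\Leftrightarrow$(3), which is pure commutative algebra. If $C_f \neq U$, then $a_U$ is not identically zero on some (hence every, by irreducibility of $U$) trivializing open. By Krull's principal ideal theorem, every irreducible component of $V(a_U)$ then has codimension exactly $1$. Conversely, if every component has codimension $1$, then $C_f$ is certainly proper. The edge case $C_f = \varnothing$ should be checked: there (3) holds vacuously and (2) holds trivially.

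The substantive part is (1)$\Leftrightarrow$(2), and I would pass through Kähler differentials at the generic point. The condition $C_f \neq U$ says that $\det f'$ is nonzero at the generic point $\eta$ of $X$. Equivalently, the map $\delta f : f^*\Omega_Y \to \Omega_X$ becomes an isomorphism after tensoring with $K(X)$. Since $\Omega_Y$ and $\Omega_X$ are locally free of rank $r$, this says that $\Omega_{K(Y)/K} \otimes K(X) \to \Omega_{K(X)/K}$ is an isomorphism when $f$ is dominant. I would then use the exact sequence
\[\Omega_{K(Y)/K}\otimes K(X) \to \Omega_{K(X)/K} \to \Omega_{K(X)/K(Y)} \to 0.\]
Recall that for a finitely generated algebraic extension $L/M$, we have $\Omega_{L/M} = 0$ if and only if $L/M$ is separable. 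Also, since $K$ is perfect and both function fields have transcendence degree $r$, both $\Omega$'s have dimension $r$. So surjectivity of the first map is equivalent to its being an isomorphism, which in turn is equivalent to $\Omega_{K(X)/K(Y)} = 0$, that is, to separability.

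I must also handle non-dominant $f$, for which (1) fails by definition. In that case $f$ factors through a proper closed subvariety $Z \subsetneq Y$ of dimension $< r$. At a general point, $f'$ then factors through $T Z$, which has rank $< r$, so $\det f' \equiv 0$ and $C_f = U$. Conversely, if $f$ is dominant between varieties of equal dimension, the extension $K(X)/K(Y)$ is finite and the argument above applies.

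I expect the main obstacle to be the careful identification of the generic stalk of $\delta f$ with the map of Kähler differentials of function fields. One also needs the fact that $\Omega_{L/M}=0$ characterizes separability for finite extensions. I would cite the standard results (e.g.\ Hartshorne II.8.6A and the separability criterion for differentials) rather than reprove them.
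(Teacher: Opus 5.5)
Your proposal is correct and follows essentially the same route as the paper. Both arguments restrict to the morphism on $X \smallsetminus I_f$ and get (2)$\Leftrightarrow$(3) from the principality of the ideal sheaf defining $C_f$. Both then prove (1)$\Leftrightarrow$(2) by splitting on dominance: in the non-dominant case $f'$ factors through the tangent spaces of the image closure at its smooth points; in the dominant case one uses the first exact sequence of differentials at the generic point together with Hartshorne II.8.6A. Your explicit dimension count and your treatment of the edge case $C_f = \varnothing$ fill in small points that the paper leaves implicit.
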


\begin{proof}
Let $r := \dim X = \dim Y$. Replacing $X$ by $X \smallsetminus I_f$, we reduce to the case that $f: X \to Y$ is a morphism of smooth varieties of dimension $r$.

The implication (3) $\Rightarrow$ (2) is immediate. For the converse, recall that $C_f$ is defined (\ref{defn_cf_fulton_style}) as the zero scheme of the line bundle homomorphism $\det f'$. 
If $C_f$ is a proper subset of $X$ then $\det f'$ is not identically 0. Since the ideals defining $C_f$ are principal, (2) $\Rightarrow$ (3). 

Now we show $(1) \iff (2)$. We prove this by cases depending on dominance of $f$. 

Suppose that $f$ is not dominant. Then (1) is vacuously false by definition of separability, so we need only show that $C_f = X$. 
Because the image of $f'$ lies in the tangent bundle of the variety $\Bar{f(X)}$,
we have
\[C_f \supseteq \{x \in X : \dim T_{f(x)} (\Bar{f(X)}) < r\}.\] 
For all $y$ in the smooth locus $V$ of $\Bar{f(X)}$, we have $\dim T_y(\Bar{f(X)}) = \dim \Bar{f(X)}$;
and since $f$ is not dominant, $\dim \Bar{f(X)} < \dim Y = r$.
Thus $C_f$ contains the Zariski-dense open set $f^{-1}(V)$, 
so it is all of $X$.

Now suppose that $f$ is dominant. Then $f$ induces an extension $K(X)/K(Y)$ of function fields, and
$$f \text{ is generically separable } \iff K(X)/K(Y) \text{ is separable}.$$
Since $\dim X = \dim Y$, the extension $K(X)/K(Y)$ is finite algebraic, so by \cite[Theorem II.8.6A]{Hartshorne},
$$K(X)/K(Y) \text{ is separable } \iff \Omega_{K(X)/K(Y)} = 0.$$
In the First Exact Sequence for differentials \cite[II.8.3.A]{Hartshorne}
$$\Omega_{K(Y)/K} \otimes_K K(X) \to \Omega_{K(X)/K} \to \Omega_{K(X)/K(Y)} \to 0,$$
the first map is $(\delta f)_\eta : (f^* \Omega_{Y/K})_\eta \to (\Omega_{X/K})_\eta$, where $\eta$ is the generic point of $X$.
Thus
\begin{align*}
    \Omega_{K(X)/K(Y)} = 0 & \iff (\delta f)_\eta \text{ is surjective} & \text{(by exactness)} \\
    & \iff \det {(\delta f_\eta)} \neq 0 \\
    & \iff (\det \delta f)_\eta \neq 0 & \text{(by naturality)}\\
    & \iff \det f'_\eta \neq 0 & \text{(by duality)} \\
    & \iff \det f' \neq 0 \\
    & \iff C_f \subsetneq X & \text{(since $C_f = V(\det f')$}).
\end{align*}
Thus (1) is equivalent to (2).
\end{proof}

In dimension $n = 1$ and degree $d \geq 1$, an endomorphism $f \in \End^1_d$ is separable if and only if it ``factors through Frobenius'', i.e.\ $f = [F_0 : F_1]$ where $F_0, F_1 \in K[x_0^p, x_1^p]$ and $p = \Char K > 0$. 
In higher dimension, there is no such simple criterion; non--multiples-of-$p$ may appear as exponents (see Example \ref{eg_insep} below). One benefit of the flat Jacobian is that it can be used to quickly check for inseparability. 

\begin{cor} \label{cor_flat_J_and_not_separable}
Let $n, d \geq 1$ and $f = [F] \in \Rat^n_d$. Then $f$ is not separable if and only if $J^*_F = 0$.
\end{cor}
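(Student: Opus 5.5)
The plan is to combine Proposition \ref{prop_cf_and_sep} with Theorem \ref{thm_flat_J_poly:b}. Since $\PP^n$ is a smooth variety of dimension $n$, Proposition \ref{prop_cf_and_sep} applies to $f : \PP^n \dashrightarrow \PP^n$. It says that $f$ fails to be separable if and only if $C_f = \PP^n \smallsetminus I_f$, that is, if and only if the critical locus is all of the domain of definition $U_f$. By Corollary \ref{Cf is VJF minus If}, $C_f = V(J^*_F) \smallsetminus I_f$ as schemes, where $F$ is any lift of $f$. So the task reduces to showing that $V(J^*_F) \cap U_f = U_f$ if and only if $J^*_F = 0$.

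If $J^*_F = 0$, then $V(J^*_F) = \PP^n$, so $C_f = U_f$ and $f$ is not separable. For the converse, suppose $C_f = U_f$. Then the closed set $V(J^*_F)$ contains the nonempty open set $U_f$, which is dense in $\PP^n$ because $\PP^n$ is irreducible. Hence $V(J^*_F) = \PP^n$ set-theoretically. Because $K$ is algebraically closed (or by the Nullstellensatz), a form vanishing at every point of $\PP^n$ must be zero. Therefore $J^*_F = 0$.

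One point of care: $U_f$ is nonempty because $f$ is a rational map, and $n \ge 1$ ensures that $C_f$ is defined by a single element, so that Corollary \ref{Cf is VJF minus If} applies. The condition $d \ge 1$ ensures that $J^*_F$ is the genuine flat Jacobian rather than the conventional $0$ assigned when $d = 0$; for $d = 0$ the map is constant and the statement would hold anyway. Since Proposition \ref{prop_cf_and_sep} is stated for rational maps, the only real subtlety is confirming that its condition (2) reads ``$C_f \ne U_f$.'' The density argument is routine, so there is no serious obstacle.
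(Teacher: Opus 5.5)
Your proposal is correct and is the same argument as the paper's, which simply applies Proposition~\ref{prop_cf_and_sep} with $X = Y = \PP^n$ and translates the result via Theorem~\ref{thm_flat_J_poly:b}. You have also written out the density step that the paper leaves implicit: a form whose zero set contains the nonempty open set $\PP^n \smallsetminus I_f$ must vanish identically.
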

\begin{proof}
Apply Proposition \ref{prop_cf_and_sep} to $X = Y = \PP^n$ and use Theorem \ref{thm_flat_J_poly:b}. 
\end{proof}

Corollary \ref{cor_intro_insep} is the special case of the preceding when $f$ is an endomorphism, completing the proofs of formal statements made in the Introduction. 
We conclude this section with a geometric point of view and an example illustrating the simplicity of the method.

Let
\[
\Sep^n_d := \{ f \in \Rat^n_d : f \text{ is separable}\}
\]
and
\[
\SepEnd^n_d := \{ f \in \End^n_d : f \text{ is separable}\}
\]
The next Corollary shows how to view these sets as varieties, as we did for $\End_d^n$ and $\Rat_d^n$ in Lemma \ref{rat end qp}.

\begin{cor} \label{sep end qp}
For all $n \geq 1$ and $d \geq 1$, the sets
\[
\SepEnd^n_d \subseteq \Sep^n_d \subseteq \Bar\End^n_d
\]
are nonempty and Zariski-open.
\end{cor}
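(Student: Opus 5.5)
By Corollary \ref{cor_flat_J_and_not_separable}, the set $\Sep^n_d$ is exactly the set of $[F] \in \Rat^n_d$ with $J^*_F \neq 0$. So I will write it as an intersection of two open sets:
\[
\Sep^n_d = \Rat^n_d \cap \big(\Bar\End^n_d \smallsetminus V(\Xi_\beta : |\beta| = m)\big),
\]
where, as in the proof of Lemma \ref{lem_specific_implies_general}, we expand $J^*_{n,d} = \sum_{|\beta|=m} \Xi_\beta \mathbf{x}^\beta$ with $m = (n+1)(d-1)$. Each $\Xi_\beta$ is a form of degree $n+1$ in the coordinates $\mathbf{u}$ of $\Bar\End^n_d$, so its vanishing is a well-defined closed condition on the projective space. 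Moreover $J^*_F = 0$ if and only if all $\theta_F(\Xi_\beta)$ vanish. Since $\Rat^n_d$ is open by Lemma \ref{rat end qp}, the set $\Sep^n_d$ is open. Intersecting further with the open set $\End^n_d$ shows that $\SepEnd^n_d = \End^n_d \cap \Sep^n_d$ is open as well. The inclusions are immediate from the definitions.

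Since $\Bar\End^n_d$ is irreducible, nonemptiness of both sets reduces to exhibiting a single separable endomorphism. For this I would compute $J^*_F$ directly via Corollary \ref{Jstar formula} and check that it is nonzero.

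If $\Char K \nmid d$, take the power map $F = (x_0^d, \ldots, x_n^d)$. Then $J_F = d^{n+1}(x_0\cdots x_n)^{d-1}$, so $J^*_F = d^n (x_0 \cdots x_n)^{d-1} \neq 0$. (When $d=1$ this is the identity, which works in every characteristic.)

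The main obstacle is the wild case $p = \Char K \mid d$, where the power map is inseparable. I would first try a simple perturbation of it, namely
\[
F = (x_0^d + x_0^{d-1}x_1,\ x_1^d,\ \ldots,\ x_n^d).
\]
Its components have no common zero: if all vanish, then $x_1 = \cdots = x_n = 0$, and so $x_0^d = 0$. Hence $F \in \End^n_d$. To compute $J^*_F$, use $\det D^{(0)}_F / x_0$. The matrix $D^{(0)}_F$ has first column $F$ and $(\partial_j F_i)_{j \ge 1}$ in the remaining columns. In characteristic $p \mid d$ we have $\partial_j x_j^d = 0$, so the only nonzero derivative entry is $\partial_1 F_0 = x_0^{d-1}$. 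Then the determinant is not obviously nonzero, since most columns vanish. So I would instead aim for a tuple in which every $F_i$ with $i \ge 1$ is linear in a distinct variable, e.g.
\[
F_i = x_0^{d-1}x_i + x_i^d \quad (1 \le i \le n), \qquad F_0 = x_0^d + x_1^d.
\]
Then $\partial_i F_i = x_0^{d-1}$ for $i \ge 1$, and $D^{(0)}_F$ is lower-triangular apart from its first column. This gives $\det D^{(0)}_F = F_0 \, x_0^{n(d-1)}$ plus terms killed by the zero entries $\partial_j F_0 = 0$. Hence $J^*_F = x_0^{n(d-1)-1}(x_0^d+x_1^d) \neq 0$.

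It remains to check that this tuple has no common zero. If $x_0 = 0$, then $F_i = x_i^d$ forces all $x_i = 0$. If $x_0 \neq 0$, the equations $F_i = 0$ give $x_i \in \{0\} \cup \{x_i^{d-1} = -x_0^{d-1}\}$, and then $F_0 = 0$ constrains $x_1$; such common zeros may exist, so this is the delicate point. If they do, I would fall back on the paper's own method and pass to an endomorphism via Proposition \ref{lin comb}. The argument there uses only the openness of the relevant sets, so it goes through verbatim with $O$ replaced by $\Sep^n_d$: the line joining this separable rational map to the power map meets $\End^n_d \cap \Sep^n_d$ in a nonempty open set. This gives $\SepEnd^n_d \neq \varnothing$ in all characteristics.
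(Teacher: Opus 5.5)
Your proof is correct. The openness half is the same as the paper's, and your formulation $\Sep^n_d = \Rat^n_d \cap (\Bar\End^n_d \smallsetminus V(\Xi_\beta : |\beta|=m))$ is the right one. The nonemptiness half takes a more roundabout route than the paper does.

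The paper uses one tuple that works in every characteristic: for $d \ge 2$ it takes $F_0 = x_0^d$ and $F_i = x_0^{d-1}x_i + x_i^d$, and for $d=1$ the identity. This is visibly a morphism: $F_0 = 0$ forces $x_0 = 0$, and then $F_i = x_i^d$. The formula $J^*_F = \det D^{(0)}_F / x_0$ gives $J^*_F = x_0^{d-1}\prod_{i=1}^n (x_0^{d-1} + d x_i^{d-1})$. This is monic in $x_0$, hence nonzero whatever $\Char K$ is.

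Your wild-case tuple differs only in replacing $F_0 = x_0^d$ by $x_0^d + x_1^d$, and that change causes the difficulty you flagged. In odd characteristic it is still a morphism. Indeed, a common zero with $x_0 = 1$ forces $x_1^d = -1$ and $x_1^{d-1} = -1$, hence $x_1 = 1$ and $1 = -1$. In characteristic $2$, however, $[1:1:0:\cdots:0]$ is a common zero, so your fallback is genuinely needed there.

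The fallback is valid. It only uses that $\{[G] : J^*_G \neq 0\}$ and $\End^n_d$ are nonempty open subsets of the irreducible space $\Bar\End^n_d$, so you do not even need the line: two nonempty opens of $\Bar\End^n_d$ already meet.

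One small correction. For $n=1$ in characteristic $2$ with $d$ even, your wild tuple has the common factor $x_0 + x_1$. So it lies outside $\Rat^1_d$ and is not a ``separable rational map.'' Run the interpolation argument with the open set $\Bar\End^n_d \smallsetminus V(\Xi_\beta : |\beta| = m)$ instead of $\Sep^n_d$; that only requires $J^*_F \neq 0$.

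In comparison, your route gives a tame-case example (the power map) whose flat Jacobian is read off a diagonal. The paper's route gives one explicit separable endomorphism in all characteristics, with no case split and no appeal to irreducibility.
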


\begin{proof}
By Corollary \ref{cor_flat_J_and_not_separable},
we have $\Sep^n_d = \Rat^n_d{} \cap V(\Xi)$ where $\Xi$ is the homogeneous ideal generated by the coefficients $\Xi_\beta$ of $J^*_{n,d}$. 
Thus $\Sep^n_d$ is Zariski-open. Since $\End^n_d$ is also  Zariski-open, the intersection $\SepEnd^n_d$ is Zariski-open.
We now show $\SepEnd^n_d$ is nonempty. 
When $d = 1$, use the identity map. When $d > 1$, let
\begin{align*} 
    F_0 &:= x_0^d \\
    F_i &:= x_0^{d-1} x_i + x_i^d \qquad (1 \le i \le n).
\end{align*}
It is easy to see that $F$ is a morphism, 
and by Corollary \ref{Jstar formula}
\small 
\[J^*_F = \frac{1}{x_0} \begin{vmatrix}
    x_0^d \\ 
    x_0^{d-1} x_1 + x_1^d & x_0^{d-1} + dx_1^{d-1} \\
    \vdots & & \ddots \\
    x_0^{d-1} x_n + x_n^d & & & x_0^{d-1} + dx_n^{d-1}
\end{vmatrix}
= x_0^{d-1} \prod_{i=1}^n (x_0^{d-1} + dx_i^{d-1})\]
\normalsize 
is monic in $x_0$.
\end{proof}

\begin{eg} \label{eg_most_pathological}
If $\Char K \mid d$, then the tuple $F$ exhibited in the proof of Corollary \ref{sep end qp} defines a morphism $f$ whose critical scheme $C_f$ is a hyperplane of multiplicity $(n+1)(d-1)$, which in some sense is the most degenerate critical hypersurface possible.
For more on this phenomenon, see Ingram's work \cite{Ingram} on minimally critical maps in characteristic 0 and Faber's work \cite{Faber} on unicritical maps in dimension 1.
\end{eg}

\begin{eg} \label{eg_insep}
Let $K$ be an algebraically closed field of characteristic $3$, and consider the dominant rational map
$f : \PP^2 \dashrightarrow \PP^2$
defined by
\[F := (x^2 y, y^2 z, z^2 x).\]
Using the flat Jacobian, we can quickly check that $f$ is inseparable, despite no cubes appearing in $F$. 
By Corollary \ref{Jstar formula},
\[
J^*_F =
\frac{1}{x} \begin{vmatrix}
    x^2 y & x^2 & 0 \\
    y^2 z & 2 y z & y^2 \\
    z^2 x & 0 & 2 xz \\
\end{vmatrix} = 3 x^2 y^2 z^2
\]
vanishes in $K$. Thus by Corollary \ref{cor_flat_J_and_not_separable}, the map $f$ is not separable.
\end{eg}

For inseparable maps such as in Example \ref{eg_insep}, we have $J_F = J^*_F = 0$. So for these maps, two wrongs make a right---the homogeneous Jacobian determinant $J_F$ does indeed correctly compute the critical locus.

\subsection{The parameter scheme of separable endomorphisms} \label{sect_separable_scheme}

We conclude with a discussion of how to interpret these results ``over $\ZZ$'' instead of after choosing a base field. All our discussion in this section are just reformulations of the previous in different language.

In positive characteristic algebraic geometry and algebraic dynamics, inseparable maps are generally considered exceptional. One would prefer to work solely in a space of separable maps, to be able to speak of a general separable map, to construct moduli spaces of separable maps, to know the dimension of the set of inseparable maps, and so forth. All this requires knowing that there is a natural scheme structure on the set of separable maps. Our results on $J^*_{n,d}$ justify the existence of a parameter scheme $\SepEnd^n_{d/\ZZ}$ of separable maps, as we now explain.

We earlier introduced, for each algebraically closed field $K$, varieties $\Sep^n_d$ and $\End_d^n \subseteq \Bar\End_d^n$ that parametrize separable rational maps, all endomorphisms, and projective tuples of degree $d$ on $\PP^n$ over $K$, respectively. To emphasize the base field, we now denote these varieties $\Sep^n_{d/K}$ and $\End^n_{d/K} \subseteq \Bar\End^n_{d/K}$.
More generally, for any commutative unital ring $R$, we define the $R$-scheme
$$\Bar{\End}_{d/R}^n = \Proj R[\mathbf{u}]$$
where there is one indeterminate $u_{i, \alpha}$ for each $0 \leq i \leq n$ and each multiindex $\alpha$ of degree $d$ in $n + 1$ variables. Let $F$ be the universal tuple of degree $d$ in dimension $n$ over $R$, i.e. $F = (F_0, \ldots, F_n)$ where 
$$F_i = \sum_{\abs{\alpha} = d} u_{i, \alpha} x^\alpha \in R[\mathbf{u}][\mathbf{x}].$$
Let
$$\End_{d/R}^n = \Proj R[\mathbf{u}] \smallsetminus V(\Res(F_0, \ldots, F_n)).$$
By naturality of the resultant,
$$\End_{d/R}^n = \End_{d/\ZZ}^n \times_\ZZ \Spec R.$$
The bihomogeneity of $J^*_{n,d}$ allows us to define the $R$-scheme
$$C_{n,d/R} = V_{\End_{d/R}^n \times \PP^n_R}(\theta_R(J^*_{n,d}))$$
where $\theta_R$ is the specialization map of Section 4.
By naturality of $J^*_{n,d}$, 
$$C_{n,d/R} = C_{n,d/\ZZ} \times_\ZZ \Spec R.$$

Consider the first projection $\pi : C_{n,d/\ZZ} \to \End_{d/\ZZ}^n$. By general flatness, there is a maximal open subscheme $S^*$ of $\End_{d/\ZZ}^n$ such that $\pi|_{\pi^{-1}(S^*)}$ is flat. 
Since $J^*_{n,d} \neq 0$, the generic fiber of $\pi$ has codimension $1$. This implies that $S^*$ is the complement of the simultaneous vanishing locus of the coefficients $\Xi_\beta \in \ZZ[\mathbf{u}]$ of $J^*_{n,d}$.

So far we have not used any truly essential properties of $J^*_{n,d}$; the above claims hold just as well for $J_{n,d}$ or indeed for any nonzero bihomogenous polynomial in $\ZZ[\mathbf{u}][\mathbf{x}]$.
But in the case of $J^*_{n,d}$, the above claims have geometric interpretations.

First, by Theorem \ref{thm_flat_J_poly:b}, for every algebraically closed field $K$, the scheme $C_{n, d/ K}$ parametrizes degree-$d$ endomorphisms of $\PP^n_K$ with a marked critical point. Second, by Corollary \ref{cor_flat_J_and_not_separable}, the scheme $S^*$ over which $\pi$ is flat (when $n, d \geq 1$) has the property that for all algebraically closed fields $K$, the base change $S^* \times_\ZZ \Spec K$ is precisely $\SepEnd^n_{d/K}$.
In this sense, $\SepEnd^n_{d/\ZZ} := S^*$ is the parameter scheme of separable endomorphisms. 

In contrast, the flatness locus of $J_{n,d}$ is $\End_{d/\ZZ[1/d]}^n$. This is because fibers over primes dividing $d$ jump in dimension, and all geometric fibers over primes not dividing $d$ are critical hypersurfaces (Theorem \ref{thm_flat_J_poly}).
The scheme $\End_{d/\ZZ[1/d]}^n$ throws away all the inseparable maps at primes dividing $d$. We call $J^*_{n,d}$ the flat Jacobian because it avoids the jump in fiber dimension at these wild primes.

\bibliographystyle{abbrv}
\bibliography{critrefs}

\end{document}